\newtheorem{theorem}{Theorem}[section]
\newtheorem*{introtheorem}{Theorem}
\newtheorem{lemma}[theorem]{Lemma}
\newtheorem{proposition}[theorem]{Proposition}
\newtheorem{corollary}[theorem]{Corollary}
\theoremstyle{remark}
\newtheorem{observation}[theorem]{Remark}
\newtheorem{definition}{Definition}[section]
\newcommand{\les}{\lesssim}
\newcommand{\dd}{\,d}
\newcommand{\R}{\mathbb R}
\newcommand{\C}{\mathbb C}
\newcommand{\Z}{\mathbb Z}
\newcommand{\lb}{\label}
\newcommand{\be}{\begin{equation}}
\newcommand{\ee}{\end{equation}}
\newcommand{\B}{\mathcal B}
\newcommand{\mc}{\mathcal}
\newcommand{\K}{\mc K}
\newcommand{\W}{\mc W}
\DeclareMathOperator{\supp}{supp}
\DeclareMathOperator{\BMO}{BMO}
\DeclareMathOperator{\p}{\mathbb P}
\renewcommand{\P}{\p}
\DeclareMathOperator{\E}{\mathbb E}
\DeclareMathOperator{\rere}{Re}
\renewcommand{\Re}{\rere}
\title[Spectral multipliers and wave propagation]{Spectral multipliers and Wave Propagation for Hamiltonians with a scalar potential}
\author{Marius Beceanu}
\address{University at Albany SUNY, Mathematics and Statistics Department, 1400 Washington Ave., Albany, NY 12222}
\email{mbeceanu@albany.edu}
\author{Michael Goldberg}
\address{University of Cincinnati, Department of Mathematical Sciences, 2815 Commons Way, Cincinnati, OH 45221-0025}
\email{goldbeml@ucmail.uc.edu}
\begin{document}

\begin{abstract} We extend several fundamental estimates regarding spectral multipliers for the free Laplacian on $\R^3$ to the case of perturbed Hamiltonians of the form $-\Delta+V$, where $V$ is a scalar real-valued potential.

In this paper, we prove resolvent estimates, a dispersive bound for the perturbed wave propagator, Mihlin multiplier and fractional integration bounds, and the full range of wave equation Strichartz estimates, under optimal or almost optimal scaling-invariant conditions on the potential and on the spectral multipliers themselves.
\end{abstract}
\maketitle

\tableofcontents
\section{Introduction}

In this paper we establish three properties of the spectral measure of three-dimensional Schr\"odinger operators $H = -\Delta + V$ with potentials belonging to a scaling-critical class.   The first set of results concerns pointwise behavior of the resolvents
$R_V(z) = (H - z)^{-1}$ as integral operators on $\R^3$, in particular controlling the limits as $z$ approaches the positive real axis.  One of the key estimates is equivalent to a statement about the ``sine" propagator of the linear wave equation, and it will be proved in that context.  Some useful bounds on the exponential decay of eigenfunctions are obtained as a consequence.

The second set of results concerns the $L^p$-boundedness of spectral multipliers $m(H)$.  We prove that if $m$ satisfies the conditions of the classical H\"ormander-Mihlin theorem, then $m(H)$ is a bounded operator on $L^p(\R^3)$, $1 < p < \infty$.  While the integral kernel of $m(H)$ strongly resembles that of a Calder\'on-Zygmund singular integral, we leave the analysis of its $L^1$ and Hardy space behavior for a future paper.  With a slightly stronger, but still scale-invariant condition on the potential, we are able to show that $m(H)$ is weak-type (1,1) using a different argument.  Sobolev inequalities for factional integration are recovered as well.

Finally, we prove Strichartz estimates for the perturbed wave equation provided the potential also belongs to the Lorentz space $L^{3/2, \infty}$, following an argument due to Beals~\cite{beals}.

\subsection{Setup}
Consider Hamiltonians of the form $H=-\Delta+V$ in $\R^3$, i.e.\;in three space dimensions. Here the Laplacian $-\Delta$ is the free Hamiltonian, corresponding to free motion, while $V$ is a real-valued scalar potential. 

Our basic assumption is that $V$ belongs to the closure of $C^\infty_c(\R^3)$ with respect to the global Kato norm
\be\lb{katonorm}
\|V\|_\K := \sup_{y \in \R^3} \int_{\R^3} \frac{|V(x)| \dd x}{|x-y|}
\ee
and we denote this space by $\K_0$. Then $H=-\Delta+V$ is self-adjoint and bounded from below~\cite{simon}.  Multiplication by $V$ is a compact perturbation relative to the Laplacian, thus the essential spectrum of $H$ remains $[0,\infty)$ and there can be at most finitely many eigenvalues away from $0$.  If one assumes that $V$ is in $ L^{3/2,1}(\R^3) \subset \K_0$ (a weaker condition suffices) is real-valued, then there are no embedded eigenvalues or resonances in the continuous spectrum ($\lambda>0$), see \cite{ioje} and \cite{kota}. With our baseline assumption $V \in \K_0$, there are no embedded resonances, see~\cite{goldberg}. Where necessary, we assert the absence of embedded eigenvalues as an independent assumption.

For some results in this paper we need to further assume that $V \in \K_0 \cap L^{3/2,\infty}(\R^3)$. Note that both the global Kato norm and the Lorentz norm $L^{3/2, \infty}(\R^3)$ scale similarly to $|x|^{-2}$, which is critical with respect to the Laplacian.  One result (Theorem~\ref{thm:weakL1}) requires a modified Kato norm with $\ell^1$ summability across dyadic length scales. The relevant norm definition will be introduced in the statement of that theorem.

Both our pointwise bounds for the wave equation and the Strichartz inequalities apply to the absolutely continuous part of the propagation, that is we first apply an orthogonal projection away from the space of bound states.  Bound states at the threshold of the continuous spectrum (i.e.\; at zero energy) can be present even for $V \in C^\infty_c$ and in general destroy dispersive estimates, even after first projecting the bound states away.  Throughout this paper we make an \emph{a priori} assumption that there are no obstructions at zero energy.

The free Hamiltonian is a positive operator, thus one common way to prevent bound states is to require the potential to be nonnegative as well.  When this is the case, one can often also weaken the regularity and decay assumptions on $V$ substantially, for example letting $V \geq 0$ belong to a reverse H\"older class.  Bound states can also be avoided if the negative part of the potential, $V_- := \max(-V, 0)$, is present but sufficiently small.  The sharp smallness condition $\| V_- \|_{\K} < 4\pi$ arises naturally and has been exploited by~\cite{dancpier}.

Propagators for the perturbed wave equation and Mihlin multipliers are both examples of spectral multipliers. 
One can use functional calculus to define a family of commuting operators $f(H)$, called spectral multipliers, at first for analytic functions $f$ by a Cauchy integral around the spectrum.  Since $H$ is self-adjoint here, $f(H)$ can be defined in terms of the spectral measure
\be \lb{functionalcalc}
f(H) = \int_{\sigma(H)} f(\lambda) \dd E_H(\lambda)
\ee
which exists as a bounded operator on $L^2$ whenever $f$ is a bounded Borel measurable function on $\sigma(H)$ (the spectrum of $H$).

Commonly studied classes of multipliers include the resolvent $R_V(\lambda)=(H-\lambda)^{-1}$, fractional integration operators $H^{-s}$, $s>0$, Mihlin multipliers $m(H)$, where $m$ is a symbol such that $m^{(n)}(\lambda) \les \lambda^{-n}$, Bochner-Riesz sums, the Schr\"{o}dinger evolution $e^{itH}$, the wave and half-wave propagators $\cos(t\sqrt H)$, $\frac {\sin(t\sqrt H)}{\sqrt H}$, and $e^{it\sqrt H}$, the heat flow $e^{-tH}$, and others.

In the free case, where $H = -\Delta$, each spectral multiplier $f(-\Delta)$ corresponds to the Fourier multiplier $f(|\xi|^2)$.  
The following properties are well-known:

1. Resolvent bounds: The free resolvent $R_0(z) := (-\Delta-z)^{-1}$ is analytic on $\C \setminus [0, \infty)$ (where $\sigma(-\Delta)=[0, \infty)$), having the explicit kernel
\be\lb{explicit}
R_0(z)(x, y) = \frac 1 {4\pi} \frac {e^{-\sqrt{-z}}|x-y|}{|x-y|}. 
\ee
Here $\sqrt z$ is the main branch of the square root.

Although $R_0(z)$ does not exist as a bounded operator on $L^2$ for $z \in [0,\infty)$, it has a continuous (in e.g.\ the strong $\B(L^{6/5, 2}, L^{6, 2})$ topology, not in the norm topology) extension to the positive real halfline, where it takes different boundary values on each side for $\lambda>0$:
$$
R_0^\pm(\lambda)(x,y) := R_0(\lambda \pm i0)(x, y) = \frac 1 {4\pi} \frac {e^{\pm i\sqrt\lambda |x-y|}}{|x-y|}.
$$
The existence of such extensions in a weaker topology than $\B(L^2)$ is known as the limiting absorption principle.
It is clear from the formulas that the free resolvent obeys pointwise bounds uniformly in $\lambda \in \C$,
\be \lb{uniffreeresolvent}
|R_0(\lambda)(x, y)| \leq \frac 1 {4\pi |x-y|}.
\ee

2. Fractional powers of the Laplacian: For $0<\alpha<3$, $(-\Delta)^{-\alpha/2}$ has the explicit kernel
$$
(-\Delta)^{-\alpha/2}(x, y) = C_\alpha |x-y|^{\alpha-3},
$$
and it is bounded as an operator from $L^p(\R^3)$ to $L^q(\R^3)$ for $\frac{3}{p} - \frac{3}{q} = \alpha$, $1<p, q < \infty$ by the Hardy-Littlewood-Sobolev inequality.

3. Schr\"{o}dinger and wave propagators: the free Schr\"{o}dinger and wave propagators have the kernels
$$
e^{it\Delta}(x, y) := \frac 1 {(4\pi i t)^{3/2}} e^{-\frac{|x-y|^2}{4it}},\ \frac {\sin(t\sqrt{-\Delta})}{\sqrt{-\Delta}}(x, y) := \frac 1 {4\pi t} \delta_{|t|}(|x-y|).
$$
Hence one can obtain several dispersive estimates, such as pointwise decay, Strichartz, and reversed Strichartz estimates (see \cite{becgolschr} and \cite{becgol} for more details).  The bound of greatest interest in this paper is as follows:
\be \lb{integratedwave}
\int_{-\infty}^\infty \Big|\frac{\sin(t\sqrt{-\Delta})}{\sqrt{-\Delta}}(x,y)\Big|\,dt = \frac{1}{2\pi|x-y|},
\ee
where we are being imprecise in notation with regard to the integration of delta-functions.  Delta functions are also present in the key integral estimates~\eqref{eq:goal} and~\eqref{eq:unifWiener} below.

4. Mihlin spectral multipliers: In the free case, Mihlin multipliers $m(H)$ are bounded on $L^p$, $1<p<\infty$, and of weak $(1, 1)$ type, i.e.\ bounded from $L^1$ to $L^{1, \infty}$.

There are several possible choices for the definition of Mihlin symbols $m$ that lead to substantially similar properties. The classical condition is that $m^{(n)}(\lambda) \leq \lambda^{-n}$ for $n \geq 0$. One can weaken this condition to
\be \lb{Mihlin}
\sup_{\beta > 0} \|\phi(\lambda) m(\beta \lambda)\|_{\dot H^{s}} < \infty
\ee
for a given smooth function $\phi \in C^\infty_c((\frac12,2))$ and real number $s > \frac32$ and still obtain the same results.

Imaginary powers of the Laplacian, given by $m(-\Delta)=(-\Delta)^{i\sigma}$, $\sigma \in \R$, are a particularly useful family of Mihlin multipliers, with the norm in~\eqref{Mihlin} growing at the rate $|\sigma|^s$.  They are often called upon to provide boundary values with complex interpolation, see for example Sections~\ref{sec:Fractional} and~\ref{sec:Strichartz} below. 

All these spectral multipliers are $L^2$-bounded, except for the fractional powers of the Laplacian $(-\Delta)^{-s}$ and the boundary values of the resolvent along the real line.

\subsection{Summary of Results} Our goal is to prove as many of the above estimates as possible for the perturbed Laplacian $H=-\Delta+V$.  Because our assumptions permit $H$ to have a finite number of strictly negative eigenvalues $-\mu_j^2$, the wave equation admits solutions of the form $\frac{\sin(t\sqrt{H})}{\sqrt{H}} f_j = \frac{\sinh(\mu_j t)}{\mu_j} f_j$ for the associated eigenfunction $f_j$.  [The fact that $z^{-1/2}\sin(t\sqrt{z})$ is analytic on $\R$ allows the multiplier to be well defined even though $H$ is not positive.]  These solutions clearly violate~\eqref{integratedwave}.  In order to prevent these special solutions from dominating all other aspects of the wave propagator, we first apply the projection $P_c$ onto the continuous spectrum of $H$.  Our main result in this direction is as follows:

\begin{introtheorem}[Theorem~\ref{thm:main}]
 Assume $V \in \K_0$, and $H = -\Delta + V$ has no eigenvalue or resonance at
zero, and no positive eigenvalues.  Then
\begin{equation} \label{integratedperturbedwave}
\int_{-\infty}^\infty 
\Bigl|\frac{\sin(t\sqrt{H})}{\sqrt{H}}P_c (x,y)\Bigr|\,dt \les \frac{1}{|x-y|}.
\end{equation}
\end{introtheorem}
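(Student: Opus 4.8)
The plan is to pass from the time variable to the spectral variable via Stone's formula, reduce~\eqref{integratedperturbedwave} to a ``uniform Wiener'' estimate on the resolvent boundary values $R_V^\pm(\lambda^2)=R_V(\lambda^2\pm i0)$, and establish that estimate by resumming the perturbation series inside a Wiener-type (measure-valued) algebra $\W$ of operator-valued functions of $\lambda$ in which the free identity~\eqref{integratedwave} is the basic building block. The implicit constant will depend on $V$ through $\|V\|_\K$ and the relevant spectral data, as it must. Two points will carry the real work: setting up $\W$ with all the required properties at once, and the high-energy behavior $\lambda\to\pm\infty$ — the latter being exactly where the hypothesis $V\in\K_0$ (that $V$ be a Kato-norm limit of $C^\infty_c$ functions, not merely $\|V\|_\K<\infty$) is genuinely used.

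First, by the functional calculus~\eqref{functionalcalc}, the limiting absorption principle, and Stone's formula restricted to the continuous spectrum $\sigma_{\mathrm{ac}}(H)=[0,\infty)$, one has, distributionally in $t$ and after the substitution $\mu=\lambda^2$,
\be
\frac{\sin(t\sqrt H)}{\sqrt H}P_c(x,y)=\frac{2}{\pi}\int_0^\infty\sin(t\lambda)\,\im R_V^+(\lambda^2)(x,y)\dd\lambda ,
\ee
where restricting the integral to $[0,\infty)$ is precisely what produces $P_c$: no point mass is discarded because $H$ has, by hypothesis, no eigenvalue at or above $0$. Thus~\eqref{integratedperturbedwave} asserts exactly that the $\lambda$-Fourier transform (in $t$) of $\lambda\mapsto\im R_V^+(\lambda^2)(x,y)$ is a finite measure of total mass $\les|x-y|^{-1}$. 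In the free case $R_0^\pm(\lambda^2)(x,y)=(4\pi|x-y|)^{-1}e^{\pm i\lambda|x-y|}$, whose transform is a pair of point masses at $t=\pm|x-y|$ of size $(4\pi|x-y|)^{-1}$; that is~\eqref{integratedwave}, and it is the model to be preserved under perturbation. Accordingly I would fix a Banach algebra $\W$ of operator-valued functions of $\lambda\in\R$ (identified with their $\lambda$-Fourier transforms, operator-valued space--time kernels, possibly containing delta measures in $t$) so tailored that: (a) its norm dominates $\sup_{x,y}|x-y|\int_\R|T(t,x,y)|\dd t$ for kernels $T=R_0^\pm(\cdot)\,S\,R_0^\pm(\cdot)$ with $S\in\W$; (b) $\W$ is closed under pointwise composition in $\lambda$, i.e.\ space--time convolution; (c) multiplication by $V$ is an element of $\W$ of norm $\les\|V\|_\K$; and (d) $\lambda\mapsto R_0^\pm(\lambda^2)V$ lies in $\W$. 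The elementary facts behind (a)--(c) are $|x-y|\,(|x-z|\,|z-y|)^{-1}\le|x-z|^{-1}+|z-y|^{-1}$ (which makes the intermediate spatial integrals in repeated compositions collapse, each integrated potential factor producing one more $|x-y|^{-1}$) and the uniform bound $\|R_0^\pm(\lambda^2)V\|_{L^\infty\to L^\infty}\le\|V\|_\K/4\pi$; assembling all of (a)--(d) for one norm is itself delicate.

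Next I would apply the resolvent identity $R_V^\pm(\lambda^2)=R_0^\pm(\lambda^2)-R_0^\pm(\lambda^2)\,M^\pm(\lambda^2)\,R_0^\pm(\lambda^2)$ with the dressed potential $M^\pm(\lambda^2):=V\bigl(I+R_0^\pm(\lambda^2)V\bigr)^{-1}=\bigl(I+VR_0^\pm(\lambda^2)\bigr)^{-1}V$; the first term transforms to the free propagator, so by (a) it suffices to show $\lambda\mapsto M^\pm(\lambda^2)\in\W$. Given (c) and (d), this is an inversion problem in $\W$: one needs $I+R_0^\pm(\lambda^2)V$ to be invertible on $L^\infty(\R^3)$ — a space on which $R_0^\pm(\lambda^2)V$ is compact for $V\in\K_0$ — for every $\lambda\in\R$, uniformly, and one needs the inversion to stay within the $\W$-structure all the way to $\lambda=\pm\infty$. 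Pointwise invertibility holds: at $\lambda=0$ by the standing hypothesis of no zero eigenvalue or resonance; at $\lambda\ne0$ via the Fredholm alternative, using the assumed absence of positive eigenvalues and the absence of embedded resonances (automatic for $V\in\K_0$, see~\cite{goldberg}); uniformity on compact sets of $\lambda$ is operator-norm continuity of $\lambda\mapsto R_0^\pm(\lambda^2)V$.

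The endpoint $\lambda\to\pm\infty$ is where I expect the decisive difficulty: unlike in the classical Wiener setting, $R_0^\pm(\lambda^2)V$ does not decay there, so the inversion must be controlled by genuinely exploiting the oscillation of the factor $e^{\pm i\lambda|x-z|}$ (equivalently, the smoothing character of the high-frequency part of the wave propagator), not just the size of the free kernel. This is where $V\in\K_0$ enters: one splits $V=V_1+V_2$ with $V_1\in C^\infty_c$ and $\|V_2\|_\K$ as small as desired. For $V_1$, its compact support and smoothness give the required high-frequency control of $M^\pm$ through non-stationary-phase/oscillatory-integral estimates with compactly supported amplitudes; for $V_2$, the contribution is absorbed by a Neumann series, the bound $\|R_0^\pm(\lambda^2)V\|_{L^\infty\to L^\infty}\le\|V\|_\K/4\pi$ keeping everything uniform in $\lambda$. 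With $\lambda\mapsto M^\pm(\lambda^2)$ thereby placed in $\W$, feeding it back through the resolvent identity and invoking~\eqref{integratedwave} for the two outer free propagators produces the uniform Wiener estimate, and hence~\eqref{integratedperturbedwave}.
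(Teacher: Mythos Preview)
Your strategy---Stone's formula, reduction to a Wiener-type bound on $\mathcal F R_V^+((\cdot)^2)$, inversion of $I+VR_0^\pm$ in an operator-valued Wiener algebra, and the $V=V_1+V_2$ splitting for high energy---is the paper's. The gap is precisely at the step you flag as ``delicate'': a single algebra $\W$ satisfying (a)--(d) simultaneously does not exist as stated, because the target weight $|x-y|$ depends on \emph{both} endpoints while the algebra product must be uniform in intermediate variables. The paper's resolution is not one algebra but a $y$-indexed family: conjugate by $M_y$ (multiplication by $|\cdot-y|$) and ask that $(I+M_yR_0^+VM_y^{-1})^{-1}\in\W_{L^\infty}$ for each fixed $y$. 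This encodes the weight correctly, but a second obstacle then appears which your outline does not anticipate: $M_yR_0^+VM_y^{-1}$ \emph{fails} the tail condition needed for Wiener inversion in $\W_{L^\infty}$---the paper computes explicitly that $\|S_R(M_yR_0^+VM_y^{-1})\|_{\W_{L^\infty}}$ does not tend to zero as $R\to\infty$. The fix is an algebraic switch that trades inversion of $I+M_yR_0^+VM_y^{-1}$ in $\W_{L^\infty}$ for inversion of $I+M_yVR_0^+M_y^{-1}$ in $\W_\K$, where the tail condition \emph{does} hold. Finally, since everything now depends on $y$, uniformity in $y\in\R^3$ is a separate lemma: the map $y\mapsto M_yR_0^+VM_y^{-1}\in\W_{L^\infty}$ is shown to be norm-continuous on the one-point compactification of $\R^3$, with limit $R_0^+V$ at infinity, so compactness gives the uniform bound.

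Your high-energy mechanism (nonstationary phase for $V_1\in C^\infty_c$) is plausible, but the paper instead iterates Agmon's weighted-$L^2$ limiting-absorption bound $\|(1+|x|)^{-1}R_0^+(\lambda^2)(1+|x|)^{-1}\|_{\B(L^2)}\les(1+|\lambda|)^{-1}$ to get $\|M_y(R_0^+V_\epsilon)^4M_y^{-1}\|_{\B(L^\infty)}\les(1+|\lambda|)^{-3}$, which feeds directly into the second hypothesis of the Wiener inversion theorem rather than controlling the inverse pointwise in $\lambda$.
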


The wave equation propagator is very closely tied to the Fourier transform of the operator-valued function $\lambda \to R_V^+(\lambda^2) := R_V((\lambda + i0)^2)$, where $R_V(z) = (-\Delta + V -z)^{-1}$ is the resolvent of $H$.  Note that for negative values of $\lambda$, $R_V^+((-\lambda)^2) = R_V^-(\lambda^2)$ according to this definition.  Theorem~\ref{thm:main} almost immediately implies a pointwise bound for resolvent kernels continued to the positive real axis:
\begin{introtheorem}[Corollary~\ref{cor:resolventkernel}]
Under the same conditions as Theorem~\ref{thm:main},
$$
|R_V^\pm(\lambda)(x, y)| \les \frac{1}{|x-y|} \quad \text{uniformly in } \lambda \geq 0.
$$
\end{introtheorem}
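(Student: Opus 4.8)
The plan is to recognize the restriction of the boundary-value resolvent to the continuous spectrum as a one-sided ($t\ge 0$) Fourier transform in $t$ of the sine propagator $\frac{\sin(t\sqrt H)}{\sqrt H}P_c$, so that its integral kernel is dominated pointwise by the $L^1_t$ norm that Theorem~\ref{thm:main} already controls. Writing the spectral parameter as $\lambda_0^2$ with $\lambda_0\ge 0$, I would split $R_V^\pm(\lambda_0^2) = R_V^\pm(\lambda_0^2)P_c + R_V^\pm(\lambda_0^2)P_p$ and treat the two pieces separately. The bound-state piece is the easy one: all eigenvalues lie at $-\mu_j^2<0$ and there is no eigenvalue at $0$, so for $\lambda_0\ge 0$ one has the finite sum $R_V^\pm(\lambda_0^2)P_p = -\sum_j(\mu_j^2+\lambda_0^2)^{-1}P_j$, with kernel $-\sum_j(\mu_j^2+\lambda_0^2)^{-1}f_j(x)\overline{f_j(y)}$; using that the finitely many eigenfunctions $f_j$ are bounded and decay exponentially (Combes--Thomas / Agmon estimates, available for $V\in\K_0$), $|f_j(x)\overline{f_j(y)}|\les\min(1,e^{-c|x-y|})\les|x-y|^{-1}$, so $|R_V^\pm(\lambda_0^2)P_p(x,y)|\les|x-y|^{-1}$ uniformly in $\lambda_0\ge 0$.

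For the continuous piece I would fix $\lambda_0>0$, introduce a small absorption parameter $\eta>0$, and use the elementary Laplace-transform identity $\int_0^\infty e^{(i\lambda_0-\eta)t}\frac{\sin(t\sqrt\lambda)}{\sqrt\lambda}\dd t = \bigl(\lambda-(-\lambda_0-i\eta)^2\bigr)^{-1}$ valid for $\lambda\ge 0$. Since $\|\frac{\sin(t\sqrt H)}{\sqrt H}P_c\|_{\B(L^2)}\le|t|$, the operator-valued integral $\int_0^\infty e^{(i\lambda_0-\eta)t}\frac{\sin(t\sqrt H)}{\sqrt H}P_c\dd t$ converges in $\B(L^2)$, and by the spectral theorem it equals $R_V\bigl((-\lambda_0-i\eta)^2\bigr)P_c$. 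Pairing both sides against test functions and applying Fubini --- legitimate because Theorem~\ref{thm:main} gives $\int_\R\bigl|\frac{\sin(t\sqrt H)}{\sqrt H}P_c(x,y)\bigr|\dd t\les|x-y|^{-1}\in L^1_{\mathrm{loc}}$ --- shows that the kernel of $R_V\bigl((-\lambda_0-i\eta)^2\bigr)P_c$ is $\int_0^\infty e^{(i\lambda_0-\eta)t}\frac{\sin(t\sqrt H)}{\sqrt H}P_c(x,y)\dd t$. Letting $\eta\to 0^+$, the right-hand side converges for each fixed $x\ne y$ to $\int_0^\infty e^{i\lambda_0 t}\frac{\sin(t\sqrt H)}{\sqrt H}P_c(x,y)\dd t$ by dominated convergence against the $L^1_t$ majorant of Theorem~\ref{thm:main}, while $(-\lambda_0-i\eta)^2\to\lambda_0^2+i0$, so the left-hand side converges to $R_V^+(\lambda_0^2)P_c(x,y)$ by the limiting absorption principle. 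This yields
\[
|R_V^+(\lambda_0^2)P_c(x,y)| \le \int_0^\infty\Bigl|\frac{\sin(t\sqrt H)}{\sqrt H}P_c(x,y)\Bigr|\dd t \le \int_{-\infty}^\infty\Bigl|\frac{\sin(t\sqrt H)}{\sqrt H}P_c(x,y)\Bigr|\dd t \les \frac{1}{|x-y|},
\]
and since $V$ is real-valued the kernel identity $R_V^-(\lambda_0^2)(x,y)=\overline{R_V^+(\lambda_0^2)(x,y)}$ gives the same bound for $R_V^-$.

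Combining the two pieces bounds $|R_V^\pm(\lambda_0^2)(x,y)|$ by $C|x-y|^{-1}$ uniformly for $\lambda_0>0$; the assumed absence of a zero-energy eigenvalue or resonance lets the kernels be continued to $\lambda_0=0$ off the diagonal, and the bound passes to the limit. I expect the only step that is not purely formal to be the limiting absorption input used to identify $\lim_{\eta\to 0^+}R_V\bigl((-\lambda_0-i\eta)^2\bigr)P_c(x,y)$ with $R_V^+(\lambda_0^2)P_c(x,y)$, i.e.\ pointwise (off-diagonal) continuity of the perturbed resolvent kernel up to the positive real axis; this is standard for $V\in\K_0$ under the standing hypotheses of no embedded eigenvalues or resonances, and it is in this sense that the corollary follows ``almost immediately'' from Theorem~\ref{thm:main}.
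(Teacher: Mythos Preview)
Your argument is correct, but it takes a more circuitous route than the paper's. The paper does not work from the statement of Theorem~\ref{thm:main}; it works from the intermediate estimate~\eqref{eq:goal} established inside that proof, namely
\[
\int_{-\infty}^\infty \bigl|\mathcal{F}\bigl(R_V^+((\,\cdot\,)^2)\bigr)(t,x,y)\bigr|\,dt \lesssim |x-y|^{-1},
\]
which already concerns the \emph{full} resolvent $R_V^+$, not just its continuous part. Fourier inversion then gives $|R_V^+(\lambda^2)(x,y)|\le \|\mathcal{F}R_V^+(\,\cdot\,,x,y)\|_{L^1_t}\lesssim |x-y|^{-1}$ in one line, with no need to split off $P_p$, no separate eigenfunction estimate, and no limiting-absorption argument.

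Because you start from the headline statement of Theorem~\ref{thm:main}, which carries $P_c$, you are forced to reconstruct the missing $P_p$ piece via exponential decay of eigenfunctions and to pass through an $\eta\to 0^+$ limit to identify your Laplace transform with $R_V^+(\lambda_0^2)P_c$. Both detours are legitimate (exponential decay of eigenfunctions for Kato-class potentials is standard, and your dominated-convergence/LAP argument for the kernel limit is sound), so the proof goes through. The trade-off is that your approach is ``black-box'' in Theorem~\ref{thm:main} and would work for a reader who has only that statement, whereas the paper's approach is shorter but requires looking inside the proof. Note also that in the paper the eigenfunction bound you invoke (Lemma~\ref{lem:projections}) is itself derived as a \emph{consequence} of~\eqref{eq:goal} and Proposition~\ref{prop:cone}, so the logical dependencies run the opposite way there.
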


We also show that the perturbed wave equation still has finite speed of propagation.  Thus the portion of $\frac{\sin(t\sqrt{H})}{\sqrt{H}}P_c$ lying in the region $\{|x-y| > |t|\}$, i.e.\;exterior to the light cone, is created entirely by the projection $P_c$ and is a linear combination of the hyperbolic sine solutions.  In combination with~\eqref{integratedperturbedwave} this gives exponential bounds for the projection onto each eigenspace of bound states.

The proof of Theorem~\ref{thm:main} is the longest section of the paper.  It relies on the operator-valued Wiener inversion 
methods originating in~\cite{bec}, and specifically uses a version stated in~\cite{becgol}.  This argument is brought to bear on operators $R_0^+((\,\cdot\,)^2)V$ acting on weighted spaces $\frac{1}{|x-y_0|}L^\infty(\R^3)$.  We need to show that not only does this application fit within the established framework of operator-valued inversion lemmas (after some algebraic manipulation), but there is also uniformity with respect to the choice of $y_0 \in \R^3$.  

Resolvent operators are an essential ingredient in the study of spectral multipliers, as the Stone formula for the continuous spectral measure $dE_H(\lambda)$ of a self-adjoint operator (as in~\eqref{functionalcalc}) is precisely
$$
dE_H(\lambda) = (2\pi i)^{-1}[(H- (\lambda +i0))^{-1} - (H-(\lambda -i0))^{-1}],
$$
or in our case with $H = -\Delta + V$, $dE_H(\lambda) = (2\pi i)^{-1}[R_V^+(\lambda) - R_V^-(\lambda)]$.  The bound~\eqref{integratedperturbedwave} provides us with the entry point for proving a H\"ormander-Mihlin theorem for Schr\"odinger operators.

There are several such theorems currently in the literature, each one having different assumptions about the operator $H$ or the multiplier function $m$.   If $H \geq 0$ (for example if $V \geq 0$) then it fits into the broad framework of spectral calculus articulated in~\cite{sikora}, where many $L^p$ bounds for radial Fourier multipliers can be transfered over to their perturbed counterpart.  If it is known that the wave operators of $H$ are bounded on all $L^p(\R^3)$ (as occurs with potentials of the type considered in~\cite{bec2} or~\cite{becschlag}), then $L^p$ bounds for spectral multipliers $m(H)$ are once again automatically inherited from the properties of $m(-\Delta)$.  This can hold even if $H$ admits bound states with negative energy.  Results in~\cite{zheng} and~\cite{hong} also hold for Schr\"odinger operators with bound states and rough potentials; the former imposes auxiliary conditions on frequency-localized multipliers of $H$ which are difficult to check, and the latter requires stronger regularity $s > 2$ in~\eqref{Mihlin}.

We prove the following multiplier bounds.

\begin{introtheorem}[Theorem~\ref{thm:multiplier3}]

 Assume $V \in \mc K_0 \cap L^{3/2, \infty}$, and $H = -\Delta + V$ has no eigenvalue or resonance at
zero, and no positive eigenvalues.  Let $\phi$ be a $C^\infty$ function supported on
$[\frac12, 2]$ such that $\{\phi(2^{-k}\lambda)\}_{k\in\mathbb Z}$ forms a partition of unity for
$\R^+$.

Suppose $m: (0,\infty) \to \mathbb C$ satisfies
$\sup_{k\in \Z} \|\phi(\lambda)m(2^k\lambda)\|_{H^s} = M_s < \infty$ for some $s> \frac32.$
Then $m(\sqrt{H}) := m(\sqrt{H P_c})$ is a singular integral operator that is bounded on
$L^p(\R^3)$ for all $1 < p < \infty$, with operator norm less than $C_{s,p} M_s$.

If $V \in \mc K_0$ and the bound above holds for some $s>2$, then $m(\sqrt{H})$ has an integral kernel 
$K(x,y)$ that is bounded pointwise by $|K(x,y)| \les \frac{M_s}{|x-y|^3}$.
\end{introtheorem}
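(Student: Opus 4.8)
The plan is to realize $m(\sqrt H)P_c$ as a Calder\'on--Zygmund operator on $\R^3$. Boundedness on $L^2$ is immediate from the spectral theorem, since the Sobolev embedding $H^s(\R)\hookrightarrow L^\infty(\R)$ ($s>\tfrac12$) gives $\|m\|_{L^\infty((0,\infty))}\les M_s$; it then remains to exhibit an off-diagonal kernel $K(x,y)$ obeying the H\"ormander condition $\int_{|x-y|>2|y-y'|}|K(x,y)-K(x,y')|\dd x\les M_s$, the symmetric one in $x$ following by running the argument for $\overline m$ (since $m(\sqrt H)^*=\overline m(\sqrt H)$ has the same $M_s$), after which Calder\'on--Zygmund theory yields $L^p$ for $1<p<\infty$. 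Write $m=\sum_{k\in\Z}m_k$ with $m_k=\phi(2^{-k}\cdot)m$, so $g_k:=\phi\cdot m(2^k\cdot)$ has $\|g_k\|_{H^s}\le M_s$ uniformly and $m_k=g_k(2^{-k}\cdot)$. Because $m_k$ is supported away from $0$ there is an odd Schwartz function $\Phi_k$, characterized by $\widehat{\Phi_k}(\mu)=-i\mu\,m_k(|\mu|)$, with
\[
m_k(\sqrt{HP_c})P_c=\int_\R\Phi_k(t)\,\frac{\sin(t\sqrt{H})}{\sqrt{H}}P_c\dd t,\qquad \Phi_k(t)=2^{2k}\Phi_0^{(k)}(2^k t),
\]
where $\|\Phi_0^{(k)}\|_{H^s}\les M_s$; since $\Phi_0^{(k)}$ is $H^s$ and compactly supported away from the origin, $|\Phi_k(t)|\les M_s\,2^{2k}(1+2^k|t|)^{-s}$ and $\|\Phi_k\|_{L^1(\dd t)}\les M_s 2^k$. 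Hence the kernel of $m_k(\sqrt H)P_c$ is $K_k(x,y)=\int_\R\Phi_k(t)\,\frac{\sin(t\sqrt{H})}{\sqrt{H}}P_c(x,y)\dd t$, equivalently $\tfrac2\pi\int_0^\infty m_k(\mu)\,\mu\,\im R_V^+(\mu^2)(x,y)\dd\mu$, the two being interchanged by the Fourier identity underlying Theorem~\ref{thm:main}.

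Next I would estimate the pieces using the \S2 inputs: the integrated bound $\int_\R|\frac{\sin(t\sqrt{H})}{\sqrt{H}}P_c(x,y)|\dd t\les|x-y|^{-1}$ (Theorem~\ref{thm:main}); finite speed of propagation, so $\frac{\sin(t\sqrt{H})}{\sqrt{H}}(x,y)=0$ for $|t|<|x-y|$ and therefore $\frac{\sin(t\sqrt{H})}{\sqrt{H}}P_c(x,y)=-\sum_j\frac{\sinh(\mu_jt)}{\mu_j}f_j(x)f_j(y)$ there; and the exponential eigenfunction bounds $|f_j(x)f_j(y)|\les(1+|x|)^{-1}(1+|y|)^{-1}e^{-\mu_j(|x|+|y|)}$. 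Splitting the $t$-integral at $|t|\sim|x-y|$, using the decay of $\Phi_k$ on the outer range and the bound-state formula on the inner one, gives a pointwise estimate of the shape $|K_k(x,y)|\les M_s 2^{2k}(1+2^k|x-y|)^{-s}|x-y|^{-1}$ plus an exponentially localized remainder. This is not integrable in $\R^3$, so---exactly as in the classical H\"ormander--Mihlin proof---I would upgrade it to \emph{weighted $L^2$} estimates $\big\||x-y|^\alpha K_k(\cdot,y)\big\|_{L^2_x}\les 2^{k(3/2-\alpha)}M_s$ for $\tfrac32<\alpha<\min(s,\tfrac52)$, with the analogous bound for $K_k(\cdot,y)-K_k(\cdot,y')$ carrying an extra factor $\les 2^k|y-y'|$; this step uses the $L^2$ limiting absorption principle for $R_V^\pm(\lambda)$ with its $\lambda^{-1/2}$ scaling together with the diagonal bound $|\im R_V^+(\lambda)(x,y)|\les\min(\lambda^{1/2},|x-y|^{-1})$, all part of the resolvent estimates of \S2. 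A Cauchy--Schwarz argument on dyadic annuli then gives $\int_{|x-y|>R}|K_k(x,y)|\dd x\les M_s(2^kR)^{-\delta}$ with $\delta=\alpha-\tfrac32\in(0,1)$, and similarly for the difference with an extra power of $2^kR$; splitting the sum over $k$ at $2^k|y-y'|=1$ produces two convergent geometric series, hence the H\"ormander bound and $L^p$ boundedness.

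For the pointwise kernel bound, when only $V\in\K_0$ is assumed Theorem~\ref{thm:main} and Corollary~\ref{cor:resolventkernel} still hold, and I would obtain $|K(x,y)|\les M_s|x-y|^{-3}$ by summing $|K_k(x,y)|$ in two regimes. For $2^k\le|x-y|^{-1}$ one uses, in the resolvent representation, the vanishing of the spectral density at threshold forced by the absence of a zero-energy eigenvalue or resonance, i.e. $|\im R_V^+(\mu^2)(x,y)|\les\mu$ for small $\mu$; this gives $|K_k(x,y)|\les M_s 2^{3k}$, and the sum over such $k$ is $\les M_s|x-y|^{-3}$. For $2^k>|x-y|^{-1}$ one uses the oscillatory bound $|K_k(x,y)|\les M_s 2^{k(2-s)}|x-y|^{-1-s}$ (the decay $(2^k|x-y|)^{-s}$ coming from the smoothness of $\Phi_k$ and the support of the wave kernel outside the light cone); the sum over these $k$ is geometric in $2^{-k(s-2)}$, convergent precisely because $s>2$, and again totals $\les M_s|x-y|^{-3}$. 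The bound-state remainder is harmless: using the convergent integral $\int_\R|\tau|(1+|\tau|)^{-s}\dd\tau$ (finite exactly for $s>2$) in place of $\|\Phi_k\|_\infty$, it is dominated by the main term via $|x|+|y|-\tfrac12|x-y|\ge\tfrac12(|x|+|y|)$.

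The hard part will be the weighted $L^2$ step in the $L^p$ argument: Theorem~\ref{thm:main} is an essentially pointwise-in-$(x,y)$ fact (an $L^1$-in-$t$ bound on the wave kernel), whereas the Calder\'on--Zygmund machine needs $L^2$-based, scaling-invariant control of $K_k$ and of its $y$-increments, and producing the weighted limiting absorption estimates for $R_V^\pm(\lambda)$ with the correct $\lambda^{-1/2}$ scaling and uniformity in the critical class $\K_0\cap L^{3/2,\infty}$ is where the real work sits. Compounding this, since $\frac{\sin(t\sqrt{H})}{\sqrt{H}}P_c$ is \emph{not} supported inside the light cone---its exterior part being a finite combination of the $\sinh(\mu_jt)\,f_j\otimes f_j$ solutions---each $K_k$ carries a bound-state contribution that must be absorbed through the exponential decay of the $f_j$, and keeping this remainder summable in $k$ (rather than growing like $2^k$, as crude estimates suggest) is the delicate point forcing one to use the oddness of $\Phi_k$ and the triangle inequality $|x|+|y|\ge|x-y|$ carefully.
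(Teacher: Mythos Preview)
Your strategy is to show that $m(\sqrt H)P_c$ is a Calder\'on--Zygmund operator by verifying the H\"ormander cancellation condition directly. This is precisely the route the paper \emph{abandons}: in the introduction the authors state explicitly that ``it is not clear whether its kernel has the standard cancellation properties,'' and the actual proof proceeds differently. The gap in your plan is the weighted $L^2$ step. You write that the estimates $\||x-y|^\alpha K_k(\cdot,y)\|_{L^2}\les 2^{k(3/2-\alpha)}M_s$ and their $y$-increment analogues follow from ``the $L^2$ limiting absorption principle for $R_V^\pm(\lambda)$ with its $\lambda^{-1/2}$ scaling together with the diagonal bound $|\im R_V^+(\lambda)(x,y)|\les\min(\lambda^{1/2},|x-y|^{-1})$, all part of the resolvent estimates of \S2.'' None of these are in \S2 or \S3. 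What the paper actually proves is the pointwise bound $|R_V^\pm(\lambda)(x,y)|\les|x-y|^{-1}$ (Corollary~\ref{cor:resolventkernel}), the integrated wave bound (Theorem~\ref{thm:main}), the light-cone structure (Proposition~\ref{prop:cone}), and the eigenfunction decay (Lemma~\ref{lem:projections}). A weighted $L^2$ limiting absorption estimate with uniform $\lambda^{-1/2}$ scaling in the scaling-critical class $\K_0$, and a diagonal bound $\les\lambda^{1/2}$, would have to be established from scratch, and the $y$-increment (a derivative of the perturbed resolvent kernel in $y$) is more delicate still. Notice also that your argument never uses $V\in L^{3/2,\infty}$; if it worked, the hypothesis would be superfluous, which should raise a flag.

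The paper's route is quite different. It does not attempt the H\"ormander condition at all. Instead it inserts the resolvent identity $R_V^+=R_0^+-R_V^+VR_0^+$ into the Stone formula, so that $m(\sqrt H)-m(\sqrt{-\Delta})=\sum_k T_k$ with $T_k$ carrying an explicit factor of $V$. Each $T_k(x,y)$ is then estimated in $L^p_x$ (not $L^2$ with weights) using Lemma~\ref{lem:LpTransfer}, which converts the $t$-integral into an $L^p(\R_+)$ norm; the split at $t=|x-z|$ separates the light-cone contribution from the bound-state tail. Summing over $k$ produces $\int |V(z)||z-y|^{-(4-3/p)}\,dz$, and it is exactly here that $V\in L^{3/2,\infty}$ enters, to show this is in $L^{p',\infty}_y$. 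The conclusion is that $m(\sqrt H)-m(\sqrt{-\Delta})$ maps $L^{p,1}\to L^p$ for $1<p\le2$, and Marcinkiewicz interpolation with the $L^2$ bound finishes. For the pointwise bound with $s>2$ your sketch is closer in spirit to the paper's, which uses Lemma~\ref{lem:LpTransfer} with $p=\infty$, $\beta=3$ together with $|(\lambda\tilde m)^{\vee}(t)|\les|t|^{-2}$; your low-frequency input $|\im R_V^+(\mu^2)(x,y)|\les\mu$ is again not established in the paper and is in fact unnecessary, since the cruder bound $|K_k(x,y)|\les M_s 2^{2k}|x-y|^{-1}$ already sums to $|x-y|^{-3}$ over $2^k\le|x-y|^{-1}$.
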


\begin{observation}
The expression $m(\sqrt{H})$ is chosen here to be comparable to the Fourier multiplier $m(|\xi|)$.
The conclusions of Theorem~\ref{thm:multiplier3} hold equally well for $m(H)$, via the reduction 
$\|\phi(\lambda) m(2^{2k} \lambda^2)\|_{H^s} \sim \|\phi(\sqrt{\lambda})m(2^{2k}\lambda)\|_{H^s}
= \|\phi(\sqrt{\lambda})(\sum_{j=-2}^2 \phi(2^j\lambda)m(2^{2k}\lambda))\|_{H^s} \les M_s$.
The first equivalence comes from a smooth change of variables $\lambda \to \sqrt{\lambda}$ on the
support of $\phi$, and the rest are due to partition of unity properties of $\phi$ and linear changes of variable.
\end{observation}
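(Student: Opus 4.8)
The plan is to reduce the assertion to Theorem~\ref{thm:multiplier3} itself, applied to a rescaled symbol. I would set $\tilde m(\lambda) := m(\lambda^2)$ on $(0,\infty)$; by the functional calculus $\tilde m(\sqrt{H}) = \tilde m(\sqrt{HP_c}) = m\bigl((\sqrt{HP_c})^2\bigr) = m(HP_c) = m(H)$. It therefore suffices to check that $\tilde m$ satisfies the hypothesis of Theorem~\ref{thm:multiplier3}, namely $\sup_{k\in\Z}\|\phi(\lambda)\tilde m(2^k\lambda)\|_{H^s} \les M_s$ with $M_s := \sup_{k\in\Z}\|\phi(\lambda)m(2^k\lambda)\|_{H^s}$; then both conclusions of that theorem — boundedness on $L^p$ for $1<p<\infty$, and, when $s>2$, the pointwise kernel bound $|K(x,y)|\les M_s|x-y|^{-3}$ — transfer verbatim to $m(H)$.

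To verify the hypothesis, I would first write $\|\phi(\lambda)\tilde m(2^k\lambda)\|_{H^s} = \|\phi(\lambda)m(2^{2k}\lambda^2)\|_{H^s}$ and perform the change of variables $\lambda\mapsto\sqrt\lambda$. Since $\lambda\mapsto\lambda^2$ is a smooth diffeomorphism of a neighborhood of $\supp\phi=[\tfrac12,2]$ onto a compact subset of $(0,\infty)$ bounded away from $0$ and $\infty$, composition with it is bounded with bounded inverse on $H^s$, giving $\|\phi(\lambda)m(2^{2k}\lambda^2)\|_{H^s}\sim\|\phi(\sqrt\lambda)m(2^{2k}\lambda)\|_{H^s}$. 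On $\supp\phi(\sqrt{\,\cdot\,})=[\tfrac14,4]$ one has the finite partition $\sum_{j=-2}^{2}\phi(2^j\lambda)\equiv1$, so this last quantity equals $\bigl\|\phi(\sqrt\lambda)\sum_{j=-2}^{2}\phi(2^j\lambda)m(2^{2k}\lambda)\bigr\|_{H^s}$, which I would bound term by term: multiplication by the fixed $C^\infty_c$ function $\phi(\sqrt{\,\cdot\,})$ is bounded on $H^s$, and the dilation $\lambda\mapsto2^{-j}\lambda$ with $|j|\le2$ distorts the $H^s$ norm by at most a constant depending on $j$, carrying $\phi(2^j\lambda)m(2^{2k}\lambda)$ to $\phi(\lambda)m(2^{2k-j}\lambda)$, whose $H^s$ norm is $\le M_s$. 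Summing the five terms yields $\|\phi(\lambda)\tilde m(2^k\lambda)\|_{H^s}\les M_s$ uniformly in $k$, as required, and Theorem~\ref{thm:multiplier3} then applies to $\tilde m(\sqrt H)=m(H)$.

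There is no substantive obstacle: the whole argument is a change of spectral variable combined with three standard stability properties of $H^s(\R)$ — invariance under composition with a smooth, boundedly invertible diffeomorphism of a fixed compact interval, boundedness of multiplication by a fixed $C^\infty_c$ function, and bounded distortion under dilation by a fixed factor $2^j$, $|j|\le2$ — which I would cite rather than reprove. The only place calling for mild care is that the substitution $\lambda\mapsto\sqrt\lambda$ must be carried out on the compact, $0$-avoiding support of $\phi$, so that the singularity of the square root at the origin is never met, and that the partition of unity is inserted in the $\sqrt\lambda$-variable so as to realign the dyadic window of $m(2^{2k}\,\cdot\,)$ with that of $\phi$ before any rescaling.
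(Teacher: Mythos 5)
Your proposal is correct and follows essentially the same route as the paper's own justification: define $\tilde m(\lambda)=m(\lambda^2)$, use the smooth change of variables $\lambda\mapsto\sqrt\lambda$ on the compact, zero-avoiding support of $\phi$, insert the finite partition of unity $\sum_{j=-2}^{2}\phi(2^j\lambda)$, and conclude by dilation invariance of $H^s$ and the hypothesis on $m$, so that Theorem~\ref{thm:multiplier3} applies to $\tilde m(\sqrt H)=m(H)$. The standard stability properties of $H^s$ you invoke (diffeomorphism invariance on a fixed compact interval, multiplication by a fixed $C^\infty_c$ function, bounded dilations) are exactly what the paper's remark implicitly relies on.
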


Our proof of Theorem~\ref{thm:multiplier3} roughly follows the plan of showing that $m(\sqrt{H})$ is a Calder\'on-Zygmund operator.  However it is not clear whether its kernel has the standard cancellation properties.  

This has two consequences.  The first is that homogenoeus pointwise bounds on the kernel do not, by themselves, imply boundedness of the operator.  As a result we eventually pursue a weak-$(p,p)$ bound on the difference $m(\sqrt{H}) - m(\sqrt{-\Delta})$ for $1 < p \leq 2$ based on more direct $L^p_x L^{p',\infty}_y$ estimates of its kernel. One technical step requires the potential $V(x)$ to belong to $L^{3/2,\infty}(\R^3)$.
The second consequence is that the argument does not extend to the weak-$(1,1)$ endpoint. We establish the weak-$(1,1)$ bound under a slightly more restrictive set of hypotheses than what is required for the pointwise kernel bound.
\begin{introtheorem}[Theorem~\ref{thm:weakL1}]
Suppose $V \in \mc K_0$ and the conditions of Theorem~\ref{thm:multiplier3} are otherwise satisfied for some $s>2$. If the potential $V(x)$ additionally satisfies
$$
\|V\|_{\tilde \K} := \sup_{x \in \R^3} \sum_{\ell \in \mathbb Z} \|\chi(|z-x| \sim 2^\ell) V(z)\|_{\K_z} < \infty,
$$
then $m(\sqrt{H})$ maps $L^1(\R^3)$ to $L^{1,\infty}(\R^3)$ and maps $L^p(\R^3)$ to itself for all $1 < p < \infty$.
\end{introtheorem}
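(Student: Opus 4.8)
The plan is to verify that $m(\sqrt{H}):=m(\sqrt{HP_c})$ is a Calder\'on--Zygmund operator and then invoke the standard weak-$(1,1)$ theory. Two of the three required ingredients are already in hand: $m(\sqrt H)$ is bounded on $L^2(\R^3)$ with norm $\les M_s$ (this needs only self-adjointness of $H$ and $m\in L^\infty$, which follows from $s>\tfrac12$), and by the last assertion of Theorem~\ref{thm:multiplier3} its kernel $K(x,y)$ satisfies $|K(x,y)|\les M_s|x-y|^{-3}$ under the hypothesis $V\in\K_0$, $s>2$. The missing ingredient is a H\"ormander-type smoothness estimate in the second variable,
\[
\sup_{y,\,y'\in\R^3}\ \int_{|x-y|>2|y-y'|}\big|K(x,y)-K(x,y')\big|\dd x\ \les\ M_s ,
\]
and the bulk of the proof goes into it. Granting this estimate, the Calder\'on--Zygmund decomposition of $f\in L^1$ at height $\alpha$ gives the weak-$(1,1)$ bound in the usual way (the good part via the $L^2$ bound, the bad part via the mean-zero property of its atoms together with the displayed estimate). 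Marcinkiewicz interpolation with the $L^2$ bound then yields $m(\sqrt H)\colon L^p\to L^p$ for $1<p<2$; since $(m(\sqrt{HP_c}))^*=\ov m(\sqrt{HP_c})$ and $\ov m$ satisfies the same hypotheses, applying the $1<p<2$ statement to $\ov m$ and dualizing gives $2<p<\infty$ as well.

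To prove the smoothness estimate, subtract off the free multiplier: the kernel $K_0(x,y)$ of $m(\sqrt{-\Delta})$ is a classical Mihlin kernel and satisfies the H\"ormander condition outright, so it suffices to treat $K_1:=K-K_0$, the kernel of $T_1:=m(\sqrt{HP_c})-m(\sqrt{-\Delta})$. Writing $w=|V|^{1/2}$, $v=w\,\sign V$ and $M^\pm(\lambda)=I+wR_0^\pm(\lambda)v$ --- invertible on the relevant weighted spaces for all $\lambda\ge0$ precisely because $H$ has no zero-energy obstruction and no positive eigenvalue --- the symmetric resolvent identity $R_V^\pm(\lambda)=R_0^\pm(\lambda)-R_0^\pm(\lambda)\,v\,(M^\pm(\lambda))^{-1}\,w\,R_0^\pm(\lambda)$ together with Stone's formula gives
\[
T_1=-\frac1{2\pi i}\int_0^\infty m(\sqrt\lambda)\Big[R_0^+v(M^+)^{-1}wR_0^+-R_0^-v(M^-)^{-1}wR_0^-\Big](\lambda)\dd\lambda .
\]
I decompose $m=\sum_{k\in\Z}m_k$, $m_k:=m\,\phi(2^{-k}\cdot)$; the hypothesis $\sup_k\|\phi(\lambda)m(2^k\lambda)\|_{H^s}=M_s$ with $s>2$ controls the weighted norms of the one-dimensional transform of $m_k(|\cdot|)$, and combined with the finite speed of propagation for $\cos(t\sqrt H)P_c$ (established earlier in the paper) this confines the contribution $K_{1,k}(x,y)$ of $m_k$ to $|x-y|\les 2^{-k}$ up to tails exponentially small in $2^k|x-y|$. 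Throughout, the outer resolvent factor $R_0^\pm(\lambda)(x,\cdot)$ and the middle factor $(M^\pm(\lambda))^{-1}$ are handled by the operator-valued Wiener-inversion bounds on the weighted spaces $|x-y_0|^{-1}L^\infty(\R^3)$ underlying the proofs of Theorems~\ref{thm:main} and~\ref{thm:multiplier3}, with the uniformity in the base point $y_0$ built in there.

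The heart of the matter --- the step I expect to be the main obstacle --- is estimating $K_{1,k}(x,y)-K_{1,k}(x,y')$, obtained by differencing the last factor $R_0^\pm(\lambda)(u',\cdot)$. On the frequency-$k$ piece one has, with $\sqrt\lambda\sim 2^k$ and $2^{-k}\gtrsim|x-y|$,
\[
\big|R_0^\pm(\lambda)(u',y)-R_0^\pm(\lambda)(u',y')\big|\ \les\ \min\Big\{\tfrac1{|u'-y|}+\tfrac1{|u'-y'|},\ \Big(\tfrac{2^k}{|u'-y|}+\tfrac1{|u'-y|^2}\Big)|y-y'|\Big\},
\]
and after summing over $k$ everything reduces to integrals of the shape $\int_{\R^3}\frac{|y-y'|\,|V(u')|}{|u'-y|^2}\dd u'$ (together with a companion double integral in $u,u'$ in which the middle variable is tied up by the Wiener-controlled factor and $\int|V(u)|/|x-u|\dd u\les\|V\|_{\K}$). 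Such an integral is \emph{not} controlled by the bare Kato norm: the extra power $|u'-y|^{-1}$ breaks the critical scaling, which is exactly why Theorem~\ref{thm:multiplier3} does not by itself reach the weak-$(1,1)$ endpoint. It \emph{is} controlled by $\|V\|_{\tilde\K}$: splitting into dyadic shells $|u'-y|\sim2^\ell$ and using, for $u'$ in such a shell, the elementary bound $\int_{|u'-y|\sim2^\ell}|V(u')|\dd u'\les 2^\ell\,\|\chi(|u'-y|\sim2^\ell)V\|_{\K}$, one gets
\[
\int_{|u'-y|>|y-y'|}\frac{|y-y'|\,|V(u')|}{|u'-y|^2}\dd u'\ \les\ |y-y'|\!\!\sum_{2^\ell\gtrsim|y-y'|}\!\!2^{-\ell}\big\|\chi(|u'-y|\sim2^\ell)V\big\|_{\K}\ \les\ \sum_{\ell\in\Z}\big\|\chi(|u'-y|\sim2^\ell)V\big\|_{\K}\ \le\ \|V\|_{\tilde\K},
\]
uniformly in $y,y'$ (the regime $|u'-y|\les|y-y'|$ is handled by the first, undifferenced bound, which remains Kato-controllable). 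Thus it is precisely the $\ell^1$-summability across dyadic length scales encoded in $\tilde\K$ --- where $\K_0$ would give only the $\ell^\infty$-type quantity $\sup_\ell\|\chi V\|_\K$, which diverges after summation --- that makes the smoothness estimate close.

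It remains to assemble the pieces: sum the shell estimates against the exponentially decaying frequency tails to obtain $\int_{|x-y|>2|y-y'|}|K_{1,k}(x,y)-K_{1,k}(x,y')|\dd x\les 2^{-\epsilon|k|}M_s$ at each scale (the factor $2^{-k}$ from the spatial localization absorbs the $2^k$ produced by differentiating the oscillatory resolvent kernel, and the $H^s$ control with $s>2$ supplies summable decay in $k$), then sum over $k\in\Z$ and over the two Stone terms; one also checks convergence of the $\lambda$-integral defining $T_1$, near $\lambda=0$ via boundedness of $(M^\pm(\lambda))^{-1}$ (the no-obstruction hypothesis) and near $\lambda=\infty$ via the dyadic decomposition. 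Adding the classical H\"ormander bound for $K_0$ completes the verification that $K=K_0+K_1$ obeys the H\"ormander condition, and hence, as in the first paragraph, that $m(\sqrt H)$ maps $L^1\to L^{1,\infty}$ and is bounded on $L^p$ for all $1<p<\infty$.
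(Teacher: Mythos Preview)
Your plan is to verify the H\"ormander smoothness condition for the full kernel $K_1=K-K_0$, but this does not close as sketched, and the step where you invoke $\|V\|_{\tilde\K}$ is misdiagnosed. The integral you isolate as requiring $\tilde\K$, namely $\int_{|u'-y|>|y-y'|}|y-y'|\,|V(u')|\,|u'-y|^{-2}\,du'$, is already controlled by $\|V\|_\K$ alone: your own shell bound gives a contribution $\les |y-y'|\,2^{-\ell}\|V\|_\K$ on $|u'-y|\sim2^\ell$, and the geometric sum $\sum_{2^\ell>|y-y'|}|y-y'|\,2^{-\ell}\les1$ needs no $\ell^1$-summability across scales. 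The genuine gap is in the regime $|u'-y|\les|y-y'|$, which you dismiss as ``Kato-controllable.'' Once you replace the difference by $|R_0^\pm(u',y)|+|R_0^\pm(u',y')|$ there, no cancellation remains for the $x$-integration. Already for the Born term $R_0^+VR_0^+$, at each frequency $k$ with $2^k|y-y'|\le1$ the resulting bound on this restricted piece, integrated over $|x-y|>2|y-y'|$, is of order $\int_{|z-y|<|y-y'|}|V(z)|\,|z-y|^{-1}\,dz$, \emph{independent of $k$}; since there are infinitely many such $k$ the estimate does not sum, and $V\in\tilde\K$ does not help.

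The paper does not attempt to prove the H\"ormander condition for all of $K_1$. Instead it splits each $T_k$ by an $x$-dependent cutoff, $|z-y|\lessgtr\tfrac14|x-y|$. The far piece $\sum_kT_{k,2}$ \emph{is} a genuine Calder\'on--Zygmund operator (the cancellation is checked via $\nabla_yT_{k,2}$), using only $V\in\K_0$. The near piece is handled not through H\"ormander's condition at all but through the refined pointwise bound
\[
\sum_k|T_{k,1}(x,y)|\ \les\ |x-y|^{-3}\,\big\|\chi(|z-x|\sim|x-y|)V\big\|_{\K},
\]
which after decomposing $|x-y|\sim2^\ell$ gives $\sum_k|T_{k,1}f(x)|\les\|V\|_{\tilde\K}\,Mf(x)$ with $M$ the Hardy--Littlewood maximal function. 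This is precisely where the $\ell^1$-over-shells hypothesis enters, and the weak-$(1,1)$ bound for $m(\sqrt H)$ follows by combining the weak-$(1,1)$ bounds for $M$, for the Calder\'on--Zygmund operator $\sum_kT_{k,2}$, and for the free multiplier $m(\sqrt{-\Delta})$.
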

Note that this condition, which is slightly stronger than the Kato class condition, is not vacuous. In particular, any $\dot W^{1, 1}$ potential satisfies it, by Lemma \ref{w11embed}.

This time the proof is more directly perturbative from the free case.  We decompose the difference $m(\sqrt{H}) - m(\sqrt{-\Delta})$ into two terms, one of which is a Calder\'on-Zygmund operator and the other is dominated pointwise by the Hardy-Littlewood maximal function.

The techniques from Theorem~\ref{thm:multiplier3} are also sufficient to prove pointwise and $L^p \to L^q$ bounds for a class of spectral multipliers that includes the fractional powers $H^{-\alpha/2}P_c$, $0 < \alpha < 3$.  More precisely we prove:
\begin{introtheorem}[Theorem \ref{thm:multiplier4}]
Let $\alpha \in (0,3)$ and suppose the conditions of Theorem~\ref{thm:multiplier3} are satisfied for some $s> \max(\frac32 - \alpha, 1 - \frac23 \alpha)$.  Then $H^{-\alpha/2}m(\sqrt{H})$ maps $L^p(\R^3)$ to $L^q(\R^3)$ for all pairs $1 < p,q < \infty$ with
$\frac{3}{p} - \frac{3}{q} = \alpha$.

If $V \in \mc K_0$ and the conditions are otherwise satisfied for some $s > 2-\alpha$, then the integral kernel of $H^{-\alpha/2}m(\sqrt{H})$ is bounded pointwise by $|x-y|^{\alpha - 3}$.
\end{introtheorem}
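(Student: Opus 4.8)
The plan is to absorb $H^{-\alpha/2}$ into the symbol and reduce to the apparatus already built for Theorem~\ref{thm:multiplier3}. Write $H^{-\alpha/2}m(\sqrt H)P_c = \tilde m(\sqrt H)$ with $\tilde m(\lambda)=\lambda^{-\alpha}m(\lambda)$, and decompose dyadically $\tilde m=\sum_{k\in\Z}\tilde m_k$, $\tilde m_k(\lambda)=\phi(2^{-k}\lambda)\tilde m(\lambda)$, each supported on $\lambda\sim 2^k$. The point of the decomposition is the scale-dependent gain
$$
\|\phi(\lambda)\tilde m_k(2^k\lambda)\|_{H^s}=2^{-k\alpha}\,\|\phi(\lambda)\lambda^{-\alpha}\,m(2^k\lambda)\|_{H^s}\les 2^{-k\alpha}M_s,
$$
since $\phi(\lambda)\lambda^{-\alpha}$ is a fixed $C^\infty_c$ function; thus each $\tilde m_k$ is a frequency-$2^k$-localized Mihlin-type symbol whose rescaled $H^s$ norm is $\les 2^{-k\alpha}M_s$. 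Taking $m\equiv 1$ (which satisfies the hypothesis with every $M_s$ finite) recovers the Sobolev bound $H^{-\alpha/2}P_c\colon L^p\to L^q$; a general admissible $m$ is a ``roughened'' version of the same.

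For the pointwise kernel bound I would run, piece by piece, the perturbative estimate used in the proof of Theorem~\ref{thm:multiplier3} for its own kernel bound: via the Stone formula, $\tilde m_k(\sqrt H)P_c$ is an integral against $\tilde m_k(\lambda)\lambda\,[R_V^+(\lambda^2)-R_V^-(\lambda^2)]$, one expands $R_V^\pm$ in a finite Born series plus a tail, invokes the uniform kernel bounds $|R_0^\pm(\lambda)(x,y)|,\ |R_V^\pm(\lambda)(x,y)|\les|x-y|^{-1}$ (the latter being Corollary~\ref{cor:resolventkernel}) together with the global Kato norm, and integrates the resulting oscillatory expression in $\lambda$ using the $H^s$ control. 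The outcome is the free, frequency-$2^k$ kernel bound times the gain: for $2^k|x-y|\le 1$ one gets $|\tilde m_k(\sqrt H)P_c(x,y)|\les M_s\,2^{(3-\alpha)k}$ (the extra factor $2^k|x-y|$ coming from $\sin(2^k\mu|x-y|)\approx 2^k\mu|x-y|$), and for $2^k|x-y|\ge 1$ one gets $|\tilde m_k(\sqrt H)P_c(x,y)|\les M_s\,2^{(2-\alpha-s)k}|x-y|^{-1-s}$. Summing over $k$: the range $2^k\lesssim|x-y|^{-1}$ contributes $\sum 2^{(3-\alpha)k}\sim|x-y|^{\alpha-3}$ (here $\alpha<3$ enters), while $2^k\gtrsim|x-y|^{-1}$ contributes $\sum 2^{(2-\alpha-s)k}|x-y|^{-1-s}$, convergent precisely when $s>2-\alpha$ and again $\sim|x-y|^{\alpha-3}$. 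Hence $|K(x,y)|\les M_s|x-y|^{\alpha-3}$, and the $L^p\to L^q$ mapping on the line $\frac3p-\frac3q=\alpha$ follows from Hardy--Littlewood--Sobolev.

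To get the $L^p\to L^q$ statement under the weaker hypothesis $s>\max(\frac32-\alpha,1-\frac23\alpha)$ I would interpolate rather than use the pointwise kernel. The critical line $\frac3p-\frac3q=\alpha$, $1<p,q<\infty$, is self-dual about $p_0=\frac6{3+\alpha}$, $q_0=\frac6{3-\alpha}$, and at that point one factors $H^{-\alpha/2}m(\sqrt H)=H^{-\alpha/4}\,m(\sqrt H)\,H^{-\alpha/4}$ and chains $L^{p_0}\xrightarrow{H^{-\alpha/4}}L^2\xrightarrow{m(\sqrt H)}L^2\xrightarrow{H^{-\alpha/4}}L^{q_0}$, the outer maps being halves of Hardy--Littlewood--Sobolev and the middle one having norm $\sup_\lambda|m(\lambda)|\les M_s$ once $s>\frac12$. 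Near the endpoints $p\to 1$ (and dually $q\to\infty$) I would instead estimate the difference $\tilde m(\sqrt H)-\tilde m(\sqrt{-\Delta})$ directly, bounding its kernel in $L^p_xL^{r,\infty}_y$ with $\frac1r=1-\frac\alpha3$ — the Lorentz space to which $|x-y|^{\alpha-3}$ belongs — via the same Born expansion; balancing $\|V\|_\K$ and $\|V\|_{L^{3/2,\infty}}$ against this target (as in the $L^p$ step of Theorem~\ref{thm:multiplier3}, which likewise needs $V\in L^{3/2,\infty}$) is what produces the two thresholds $\frac32-\alpha$ and $1-\frac23\alpha$, whichever is larger. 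Complex interpolation in the Stein sense, with the imaginary powers $H^{i\sigma}$ as the analytic parameter, then fills in the entire line.

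The main obstacle is the summation over dyadic scales at the critical scaling. On the line $\frac3p-\frac3q=\alpha$ the naive bound on $\sum_k\|\tilde m_k(\sqrt H)\|_{L^p\to L^q}$ is only logarithmically divergent, so convergence has to come from genuine decay of the kernels (available when $s>2-\alpha$) or, for smaller $s$, from near-orthogonality of the frequency-localized pieces. For $-\Delta$ this orthogonality is exact — $\tilde m_k(\sqrt{-\Delta})$ is literally frequency-$2^k$-localized and the sum telescopes against a Littlewood--Paley square function — but $H=-\Delta+V$ respects no dyadic scale, so $V$ couples different pieces, and quantifying this off-diagonal coupling through $\|V\|_\K$ and $\|V\|_{L^{3/2,\infty}}$ is where the real work lies; it is this bookkeeping that pins down the precise exponents in the hypothesis.
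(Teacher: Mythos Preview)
Your pointwise kernel argument is close to the paper's, though the execution differs.  You propose a finite Born expansion plus a tail controlled by the uniform bound $|R_V^\pm(\lambda)(x,y)|\lesssim|x-y|^{-1}$; this is problematic because that bound carries no oscillation in $\lambda$, so it is not clear how you extract the decay $2^{(2-\alpha-s)k}|x-y|^{-1-s}$ for the tail term.  The paper avoids this by passing to the Fourier side in $\lambda$: one writes $K(x,y)=(\pi i)^{-1}\int(\lambda|\lambda|^{-\alpha}\tilde m)\widecheck{\phantom{i}}(t)\,\mathcal F R_V^+({\scriptstyle(\,\cdot\,)^2})(t,x,y)\,dt$ and splits at $t=|x-y|$.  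The region $t\ge|x-y|$ is handled by Lemma~\ref{lem:LpTransfer} (with $p=\infty$), and $t<|x-y|$ by the explicit eigenspace formula of Proposition~\ref{prop:cone}.  The oddness of $\lambda\tilde m$ is used to gain a factor of $|t|$ in $\widecheck{g_k}$ near zero, which is your ``$\sin\approx$ argument'' recast on the Fourier side; this is what makes the low-frequency sum converge for all $\alpha<3$ rather than only $\alpha<2$.  Your summation over $k$ and the final kernel bound are correct.

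The $L^p\to L^q$ argument has a genuine gap.  The threshold $s>\max(\tfrac32-\alpha,\,1-\tfrac23\alpha)$ is the upper boundary of the convex hull of the three points $(0,\tfrac32)$, $(\tfrac32,0)$, $(3,-1)$ in the $(\alpha,s)$-plane, and the vertex $(\tfrac32,0)$ is essential.  The paper supplies it via a separate lemma: at $\alpha=\tfrac32$ and any $s>0$, one shows that $K(\,\cdot\,,y)\in L^{2,\infty}(\R^3)$ uniformly in $y$ by decomposing $K=F+G$ with $\|F(\,\cdot\,,y)\|_{L^2(|x-y|>2^{k_0})}\lesssim 1$ and $\|G(\,\cdot\,,y)\|_{L^\infty(|x-y|>2^{k_0})}\lesssim 2^{-3k_0/2}$ (the split being in the dyadic index $k$ at $k=-k_0$).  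This yields $L^1\to L^{2,\infty}$ and dually $L^{2,1}\to L^\infty$, and Stein interpolation against the Mihlin endpoint $(\alpha,s)=(0,\tfrac32)$ and the kernel endpoint $(\alpha,s)=(3-\epsilon,-1+\epsilon)$ then gives the full threshold.  Your self-dual factorization $H^{-\alpha/4}m(\sqrt H)H^{-\alpha/4}$ only reaches $s>\tfrac12$ and only at the single pair $(p_0,q_0)$; interpolating that against the Mihlin endpoint covers neither the full line $\tfrac3p-\tfrac3q=\alpha$ nor the stated $s$-range.  Your ``near-endpoint'' sketch, adapting the $T_{k,1}+T_{k,2}$ argument of Theorem~\ref{thm:multiplier3}, actually fails at $p=1$: with the extra $2^{-k\alpha}$ the low-frequency sum $\sum_{k\le 0}2^{(3-3/p-\alpha)k}$ diverges once $p<\tfrac{3}{3-\alpha}$, so one cannot push to $L^1$ that way.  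The $L^{2,\infty}$ endpoint at $\alpha=\tfrac32$ is the missing idea.
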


The concluding discussion of Strichartz estimates 
proves the full range of Strichartz estimates for the perturbed wave equation $u_{tt} + Hu = F(t,x)$.
The minimal regularity of the potential $V \in \mc K_0 \cap L^{3/2,\infty}(\R^3)$ is enough to insure that Sobolev spaces $H^{s}(\R^3)$, $|s| < \frac32$ are equivalent whether they are defined relative to the the free or perturbed Laplacian, and similarly for a range of spaces $W^{s,p}(\R^3)$ that arise in the course of the proof.

\subsection{Comparison with previous results}

Previous work on spectral multipliers for $-\Delta+V$ was done by Zheng \cite{zheng}, Hong \cite{hong}, and Sikora--Yan--Yao \cite{sikora}, as well as in \cite{becgol}.

Zheng \cite{zheng} proved the $L^p$ and weak $L^1$ boundedness of Mihlin multipliers in every dimension, for potentials of arbitrary sign, using a certain pair of conditions on the Hamiltonian $-\Delta+V$ itself. It is not clear to what condition on the potential $V$ this corresponds, except in dimension one.

Sikora--Yan--Yao \cite{sikora} used heat semigroups and Gaussian kernel estimates to prove both $L^p$ estimates for Mihlin multipliers and resolvent estimates. Their results apply to some extent in a much wider setting, including the bi-Laplacian and on compact manifolds. However, they did not look for sharp conditions on the potential and, due to the method, the results only apply when the Hamiltonian is positive semi-definite.

Hong \cite{hong} proved the $L^p$ boundedness of Mihlin multipliers in three dimensions, for $-\Delta+V$ where $V \in \mc K_0 \cap L^{3/2, \infty}$ and where the Mihlin multiplier $m$ is locally in $H^s$, $s>2$.

Our current results also hold in three dimensions, though they have parallels in dimensions one and two as well. We generalize the results of Hong \cite{hong} in two ways: under the same conditions, we replace the exponent $s>2$ with $s>3/2$, same as the optimal range in the classical H\"{o}rmander theorem for $-\Delta$, and under a slightly stronger condition on $V$ we also prove the weak-$L^1$ boundedness of Mihlin multipliers when $s>2$.

We also include several results with no counterparts in the above-mentioned papers, dealing with other classes of spectral multipliers, from a completely new estimate for the resolvent to bounds for negative powers of the Hamiltonian (fractional integration) to Strichartz estimates for the wave propagators.

\subsection{Notations}
We use the following standard notations for the Fourier transform and its inverse:
$$
[\mc F f](\xi) = \int_{\R^d} f(x) e^{-ix\xi} \dd x
$$
and
$$
[\mc F^{-1} g](x) = \frac 1 {(2\pi)^d} \int_{\R^d} g(\xi) e^{ix\xi} \dd \xi.
$$

\section{Pointwise bound for the wave propagator $\frac{\sin(t\sqrt{H})}{\sqrt{H}}$}
Here we prove the following estimate for the perturbed wave
equation:
\begin{theorem} \label{thm:main}
 Assume $V \in \K_0$, and $H = -\Delta + V$ has no eigenvalue or resonance at
zero, and no positive eigenvalues.  Then
\begin{equation}
\int_{-\infty}^\infty 
\Bigl|\frac{\sin(t\sqrt{H})}{\sqrt{H}}P_c (x,y)\Bigr|\,dt \les |x-y|^{-1}.
\end{equation}
\end{theorem}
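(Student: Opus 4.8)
The plan is to translate, via the Stone formula, the $L^1_t$ estimate into a Wiener-algebra bound on the resolvent kernel in the spectral variable, and then to obtain that bound from operator-valued Wiener inversion carried out uniformly over a family of weighted spaces indexed by the second kernel variable $y_0$.

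\emph{Reduction.} On the continuous spectrum $dE_H(\lambda) = \pi^{-1}\im R_V^+(\lambda)\dd\lambda$, so the substitution $\lambda = \mu^2$ gives
$$\frac{\sin(t\sqrt H)}{\sqrt H}P_c(x,y) = \frac 2\pi\int_0^\infty \sin(t\mu)\,\im R_V^+(\mu^2)(x,y)\dd\mu .$$
Because $V$ is real, $R_V^-(\mu^2) = \overline{R_V^+(\mu^2)}$, and with the convention $R_V^+((-\mu)^2) := R_V^-(\mu^2)$ the map $\mu\mapsto \im R_V^+(\mu^2)(x,y)$ extends to an odd function on all of $\R$; hence the left-hand side is a fixed constant times the Fourier transform in $\mu$ of $\mu\mapsto R_V^+(\mu^2)(x,y)$. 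Consequently $\int_\R\bigl|\tfrac{\sin(t\sqrt H)}{\sqrt H}P_c(x,y)\bigr|\dd t$ is controlled by the Wiener-algebra norm $\|R_V^+(\cdot^2)(x,y)\|_{\W}$ of that scalar function of $\mu$, and the theorem is equivalent to $\sup_{x\neq y}|x-y|\,\|R_V^+(\cdot^2)(x,y)\|_{\W} < \infty$. (The algebra $\W$ of \cite{bec,becgol} is built on finite measures, so it accommodates the Dirac front on $\{\,|t| = |x-y|\,\}$ contributed by the free kernel $e^{i\mu|x-y|}/(4\pi|x-y|)$; this is the ``imprecision with delta functions'' of~\eqref{integratedwave}.)

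\emph{The weighted spaces and the resolvent identity.} Fix $y_0\in\R^3$ and set $X_{y_0} := \frac1{|\cdot - y_0|}L^\infty(\R^3)$ with $\|f\|_{X_{y_0}} = \sup_x |x-y_0|\,|f(x)|$; the point evaluation $f\mapsto f(x)$ has norm $|x-y_0|^{-1}$ on this space. The elementary inequality $\frac{|x-y_0|}{|x-z|\,|z-y_0|}\le\frac1{|x-z|}+\frac1{|z-y_0|}$ shows that the integral operator with kernel $\frac{|V(z)|}{4\pi|x-z|}$ is bounded on $X_{y_0}$ with norm $\le\frac1{2\pi}\|V\|_{\K}$, \emph{uniformly in $y_0$}; hence $R_0^+(\mu^2)V$, whose kernel is $\frac{e^{i\mu|x-z|}}{4\pi|x-z|}V(z)$, is bounded on $X_{y_0}$ uniformly in $\mu$ and $y_0$, and since its $\mu$-dependence enters only through the unimodular phase $e^{i\mu|x-z|}$, a structural lemma of the framework of \cite{bec,becgol} places $\mu\mapsto R_0^+(\mu^2)V$ in the operator-valued Wiener algebra $\W(\B(X_{y_0}))$, again with norm uniform in $y_0$; density of $C^\infty_c$ in $\K_0$ supplies the decay $\|R_0^+(\mu^2)V\|_{\B(X_{y_0})}\to0$ as $|\mu|\to\infty$, uniformly in $y_0$, needed to handle the point at infinity. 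The second resolvent identity, suitably rearranged so that the singular free front is separated and the inversion acts on a regular vector (this rearrangement, and the precise choice of algebra in which it is legitimate, are part of the framework), expresses $R_V^+(\mu^2)(\cdot,y_0)$ as the explicit free term $\frac{e^{i\mu|\cdot-y_0|}}{4\pi|\cdot-y_0|}$ plus $(I+R_0^+(\mu^2)V)^{-1}$ applied to an element of $\W(X_{y_0})$ whose norm is $\les\|V\|_{\K}$ uniformly in $y_0$. The estimate therefore reduces to showing $\mu\mapsto(I+R_0^+(\mu^2)V)^{-1}\in\W(\B(X_{y_0}))$ with norm bounded uniformly in $y_0$.

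\emph{Invertibility, Wiener inversion, and the main obstacle.} By the operator-valued Wiener inversion lemma of \cite{becgol} (applicable since $\mu\mapsto R_0^+(\mu^2)V$ lies in $\W(\B(X_{y_0}))$ and vanishes at infinity), it is enough that $I+R_0^+(\mu^2)V$ be invertible in $\B(X_{y_0})$ for every $\mu\in\R$. As $R_0^+(\mu^2)V$ is compact on $X_{y_0}$ for $V\in\K_0$, the Fredholm alternative reduces this to excluding nonzero $f\in X_{y_0}$ with $f = -R_0^+(\mu^2)Vf$: applying $-\Delta-\mu^2$, such $f$ is an outgoing distributional solution of $(-\Delta+V-\mu^2)f = 0$ with $|x|^{-1}$ decay, ruled out for $\mu\neq0$ by the hypothesis of no positive eigenvalues together with the absence of embedded resonances (\cite{goldberg}) and a Rellich-type uniqueness theorem, while for $\mu=0$ it is exactly a zero-energy resonance or eigenfunction, excluded by hypothesis. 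The remaining — and principal — difficulty is to make this invertibility \emph{uniform in $y_0$}: one must bound $\|(I+R_0^+(\mu^2)V)^{-1}\|_{\B(X_{y_0})}$ uniformly in $\mu\in\R\cup\{\infty\}$ and in $y_0\in\R^3$. This follows from the $y_0$-uniform operator bounds and decay already noted, a compactness argument in $(\mu,y_0)$, and the observation that the obstruction functions $f$ do not depend on $y_0$ — a genuine zero-energy resonance or embedded solution decays like $|x|^{-1}$ and is continuous, so it belongs to every $X_{y_0}$ or to none — whence the set of bad $\mu$ is the same ($\emptyset$) for all $y_0$. Feeding the uniform bound back through the Wiener inversion lemma gives $\mu\mapsto(I+R_0^+(\mu^2)V)^{-1}\in\W(\B(X_{y_0}))$ with norm $\les1$ uniformly in $y_0$; composing with the $\W(X_{y_0})$ bound for the data and evaluating at $x$ (at cost $|x-y_0|^{-1}$) yields $\|R_V^+(\cdot^2)(x,y_0)\|_{\W}\les|x-y_0|^{-1}$, the required estimate. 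I expect the bookkeeping for this $y_0$-uniformity, rather than any single analytic input, to be the hard part.
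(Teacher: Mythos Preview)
Your overall strategy matches the paper's: Stone formula reduction to a Wiener-algebra bound on $R_V^+(\cdot^2)(x,y)$, resolvent identity, and operator-valued Wiener inversion on the weighted spaces $X_{y_0}=|x-y_0|^{-1}L^\infty$ (equivalently, conjugation by $M_{y_0}$ on $L^\infty$), carried out uniformly in $y_0$. The reduction and the identification of $y_0$-uniformity as the main bookkeeping issue are both on target.

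There is, however, a genuine gap at the Wiener inversion step. The inversion lemma you invoke (Proposition~10 of \cite{becgol}, restated in the paper as Theorem~\ref{thm:inversion}) does \emph{not} require merely that $\|R_0^+(\mu^2)V\|_{\B(X_{y_0})}\to 0$ as $|\mu|\to\infty$; it requires the Fourier-side condition~(C1), namely $\lim_{R\to\infty}\|S_R T\|_{\W_X}=0$, where $S_R$ truncates the $t$-support of $\mathcal F T$ to $|t|>R$. For $T=R_0^+(\cdot^2)V$ acting on $X_{y_0}$ (equivalently $M_{y_0}R_0^+VM_{y_0}^{-1}$ on $L^\infty$) this condition \emph{fails}: the kernel of $\mathcal F T$ is supported on $\{t=|x-z|\}$, and the paper computes via Fatou that
\[
\|S_R(M_{y_0}R_0^+VM_{y_0}^{-1})\|_{\W_{L^\infty}}\ \ge\ \int_{\R^3}\frac{|V(z)|}{4\pi|z-y_0|}\,dz
\]
for every $R$. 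Intuitively, as $|x|\to\infty$ the entire mass of $V$ lies in the far-field $|x-z|>R$, so the truncated tail does not decay. Hence the inversion theorem is not directly applicable to $I+R_0^+V$ on $X_{y_0}$, and your appeal to it is unjustified as written.

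The paper's remedy is to pass, via the algebraic identity
\[
(I+M_{y_0}R_0^+VM_{y_0}^{-1})^{-1}=I+M_{y_0}R_0^+M_{y_0}^{-1}\,(I+M_{y_0}VR_0^+M_{y_0}^{-1})^{-1}\,V,
\]
to the conjugated operator $M_{y_0}VR_0^+M_{y_0}^{-1}$ acting on the Kato space $\K$ rather than on $L^\infty$. With $V$ on the left, the kernel carries a factor $|x-y_0|V(x)$, and Lemma~\ref{lem:conditions} (specifically~\eqref{eq:Kstarbound}--\eqref{eq:Kbound}) shows that (C1) \emph{does} hold on $\W_\K$, because the decay of $V$ in $\K_0$ now controls the tail. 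Condition~(C2) is then verified by a separate argument using the high-energy bound~\eqref{eq:highenergy} on $(R_0^+V)^{N-1}$ and approximation by bounded compactly supported potentials; your proposal does not address~(C2) at all. Finally, the $y_0$-uniformity is obtained not by a compactness-in-$(\mu,y_0)$ argument but by proving (Lemma~\ref{lem:continuity}) that $y_0\mapsto M_{y_0}R_0^+VM_{y_0}^{-1}$ is norm-continuous on the one-point compactification of $\R^3$, with limit $R_0^+V$ at infinity, whose inverse in $\W_{L^\infty}$ is already known from~\cite{becgolschr}. Your observation that the set of bad $\mu$ is independent of $y_0$ is correct and used, but it does not by itself yield a uniform norm bound on the inverse; the continuity argument is what closes this.
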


\begin{observation}
There may be a shorter proof that combines the time-weighted bound for the sine propagator in~\cite[Theorem 1]{becgol} with support properties derived as Proposition~\ref{prop:cone} in the next section and subsequent analysis of eigenspace projections.  The argument here is longer but self-contained modulo proving the Wiener inversion theorem (Theorem~\ref{thm:inversion}), and it develops the framework needed to prove assertions in Section~\ref{sec:lightcone}.
\end{observation}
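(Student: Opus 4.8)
The plan is to carry out the alternative route by splitting the $t$‑integral at the light cone $\{|t| = |x-y|\}$ and treating the two regimes by genuinely different mechanisms. Write $P_c = I - P_{pp}$, so that $\frac{\sin(t\sqrt H)}{\sqrt H}P_c(x,y) = \frac{\sin(t\sqrt H)}{\sqrt H}(x,y) - K_{pp}(t,x,y)$, where, since there are no positive eigenvalues and no zero‑energy obstruction, $K_{pp}(t,x,y) = \sum_j \frac{\sinh(\mu_j t)}{\mu_j} f_j(x)\overline{f_j(y)}$ is a finite sum over the negative eigenvalues $-\mu_j^2$ of $H$ with $L^2$‑normalized eigenfunctions $f_j$. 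By Proposition~\ref{prop:cone} the full sine propagator is supported in $\{|x-y| \le |t|\}$; hence on the exterior region $\{|t| < |x-y|\}$ one simply has $\frac{\sin(t\sqrt H)}{\sqrt H}P_c(x,y) = -K_{pp}(t,x,y)$.

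On the interior region $\{|t| \ge |x-y|\}$ I would invoke the time‑weighted estimate of~\cite[Theorem 1]{becgol}, which controls $\int_{\R} |t|\,\bigl|\frac{\sin(t\sqrt H)}{\sqrt H}P_c(x,y)\bigr|\,dt$ uniformly in $x,y$. Because $|t| \ge |x-y|$ throughout this region, the weight may be traded for a constant at the cost of a factor $|x-y|^{-1}$: $\int_{\{|t|\ge|x-y|\}} \bigl|\frac{\sin(t\sqrt H)}{\sqrt H}P_c(x,y)\bigr|\,dt \le |x-y|^{-1}\int_{\R} |t|\,\bigl|\frac{\sin(t\sqrt H)}{\sqrt H}P_c(x,y)\bigr|\,dt \les |x-y|^{-1}$, which is exactly the desired bound on that piece. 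This is the step that most cleanly illustrates why finite speed of propagation is the missing ingredient that upgrades a non‑sharp (time‑weighted) bound to the sharp unweighted one.

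For the exterior region one estimates the finite‑rank kernel directly: $\int_{\{|t| < |x-y|\}} |K_{pp}(t,x,y)|\,dt \les \sum_j \mu_j^{-2}\, e^{\mu_j |x-y|}\,|f_j(x)|\,|f_j(y)|$. To bring this to $\les |x-y|^{-1}$ one needs pointwise decay of the bound states of resolvent‑kernel type, $|f_j(x)| \les \frac{e^{-\mu_j|x|}}{1+|x|}$; together with $|x-y| \le |x| + |y|$ this yields $e^{\mu_j|x-y|}|f_j(x)||f_j(y)| \les \frac{1}{(1+|x|)(1+|y|)} \le \frac{1}{1+|x-y|}$, and the finitely many terms sum to $\les |x-y|^{-1}$. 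The honest difficulty is that in the present paper these eigenfunction bounds are obtained \emph{as a consequence} of Theorem~\ref{thm:main}, so to avoid circularity the alternative proof must establish them independently: one starts from the Agmon‑type exponential bound and then bootstraps through the identity $f_j = -R_0(-\mu_j^2)(Vf_j)$, using the explicit kernel $\frac{e^{-\mu_j|x-y|}}{4\pi|x-y|}$ and Kato‑norm estimates, to recover the extra $\langle x\rangle^{-1}$ decay. I expect this eigenfunction analysis — rather than anything about the continuous spectrum — to be the main obstacle, along with the fact that Proposition~\ref{prop:cone} (finite speed at $V\in\K_0$ regularity) must itself be in hand. This is also why the longer argument is preferred: it requires only the Wiener inversion theorem (Theorem~\ref{thm:inversion}) as external input and simultaneously builds the structure reused in Section~\ref{sec:lightcone}.
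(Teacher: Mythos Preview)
Your outline correctly fleshes out the alternative route the remark points to: split at the light cone, handle $|t|\ge|x-y|$ by trading the weight in \cite[Theorem~1]{becgol} for a factor $|x-y|^{-1}$, and handle $|t|<|x-y|$ via Proposition~\ref{prop:cone} (through Remark~\ref{outsidecone}), which reduces matters to the finite-rank piece $K_{pp}$. The interior step is clean and correct, and you are right that the whole scheme stands or falls on obtaining $|P_j(x,y)|\,e^{\mu_j|x-y|}\les|x-y|^{-1}$ without invoking Theorem~\ref{thm:main}.

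The gap is in your proposed independent route to that bound. Standard Agmon estimates for $V\in\K_0$ give only the subcritical rate $e^{-(\mu_j-\epsilon)|x|}$, and a pass through $f_j=-R_0(-\mu_j^2)(Vf_j)$ with Kato-norm control does not upgrade it: from $\mu_j|x-z|+(\mu_j-\epsilon)|z|\ge(\mu_j-\epsilon)|x|+\epsilon|x-z|$ one only recovers the same rate $\mu_j-\epsilon$ after integrating $|V(z)|/|x-z|$. For slowly decaying potentials in $\K_0$ such as $|V(z)|\sim|z|^{-2}(\log|z|)^{-2}$ at infinity, the contribution from $|z|\sim|x|$ in the bootstrap integral is merely polynomially small, so iterated bootstraps do not reach the critical exponential rate either. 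Without the sharp rate the exterior estimate collapses, since $e^{\mu_j|x-y|}e^{-(\mu_j-\epsilon)(|x|+|y|)}$ can grow like $e^{\epsilon(|x|+|y|)}$.

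The non-circular fix is the one the remark actually hints at with ``subsequent analysis of eigenspace projections'': rerun the proof of Lemma~\ref{lem:projections} but replace~\eqref{eq:goal} by the time-weighted bound $\sup_{x,y}\int_\R |t\,\mathcal F R_V^+({\scriptstyle(\,\cdot\,)^2})(t,x,y)|\,dt<\infty$ from \cite{becgol}. Combining this with Proposition~\ref{prop:cone} at $\mu=0$ and restricting the integral to $t\in(|x-y|-1,\,|x-y|)$, where $|t|\sim|x-y|$, the same positive-definiteness argument on $F(a_1,\dots,a_J)$ yields $|P_j(x,y)|\,e^{\mu_j|x-y|}\les|x-y|^{-1}$ for $|x-y|\gtrsim 1$; small $|x-y|$ is handled by boundedness of eigenfunctions. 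This closes the exterior estimate without any Agmon theory, and shows why the ingredients listed in the remark are exactly the right ones.
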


\begin{proof}
First use the Stone formula for spectral measure to re-write
\begin{align}
H^{-1/2}\sin(t\sqrt{H})P_c &= 
(2\pi i)^{-1}\int_0^\infty \lambda^{-1/2}\sin(t\sqrt{\lambda}) 
(R_V^+(\lambda) - R_V^-(\lambda))\,d\lambda \nonumber \\
&= (\pi i)^{-1} \int_0^\infty \sin(t\lambda)(R_V^+(\lambda^2)-R_V^-(\lambda^2))
\, d\lambda \nonumber\\
&= (\pi i)^{-1} \int_{-\infty}^\infty \sin(t\lambda)R_V^+(\lambda^2)\,d\lambda.
\label{eq:stone}
\end{align}
Noting that $\sin(t\lambda) = (2i)^{-1}(e^{i\lambda} - e^{-i\lambda})$, 
it suffices to prove that
\begin{equation} \label{eq:goal}
\int_{-\infty}^\infty 
|\mathcal{F}(R_V^+({\scriptstyle (\,\cdot\,)^2})) (t,x,y)|\, dt \les |x-y|^{-1}
\end{equation}
where $t$ is taken to be the Fourier dual variable to $\lambda^2$.  
This is of course true for the free resolvent 
$R_0^+(\lambda^2)(x,y) = (4\pi|x-y|)^{-1}e^{i\lambda|x-y|}$.  The issue is to
preserve inequality~\eqref{eq:goal} through the course of the resolvent 
identity
\begin{equation*}
R_V^+(\lambda^2) = (I + R_0^+(\lambda^2)V)^{-1}R_0^+(\lambda^2).
\end{equation*}
The key estimate we will need is stated below.

\begin{lemma} \label{lem:goal}
Assume $V \in \mathcal{K}_0$, and there is no eigenvalue or resonance at zero
and no positive eigenvalues.
There exists $C < \infty$ so that for every $x,y \in \R^3$,
\begin{equation} \label{eq:unifWiener}
\int_{\R^3} \int_{-\infty}^\infty
|\mathcal{F}(I + R_0^+({\scriptstyle (\,\cdot\,)^2})V)^{-1}|(t,x,z)
\frac{|x-y|}{|z-y|}\,dt\, dz< C
\end{equation}
\end{lemma}

The remaining steps are as follows.
Taking the Fourier transform of the resolvent identity yields
\begin{equation*}
\mathcal{F} R_V^+(t,x,y) = \int_{\R^3}\int_{-\infty}^\infty
\mathcal{F}(I + R_0^+V)^{-1}(t-s,x,z) \mathcal{F} R_0^+(s,z,y)\,ds\,dz
\end{equation*}
where we have suppressed the dependence on $\lambda^2$.
The integral of this expression with respect to $t$ is then bounded by
\begin{multline*}
\frac{1}{|x-y|} \Big(\int_{\R^3}\int_{-\infty}^\infty 
|\mathcal{F}(I+R_0^+V)^{-1}|(t-s,x,z)\frac{|x-y|}{|z-y|}\,dt\,dz \Big) \\
\Big(\sup_z |z-y| \int_{-\infty}^\infty |\mathcal{F}R_0^+|(s,z,y)\,ds\Big).
\end{multline*}
Thanks to the explicit formula for the free resolvent, the last expression
in parentheses is exactly $\frac{1}{4\pi}$.  The first expression 
is bounded by a constant in~\eqref{eq:unifWiener} which leads to
an overall bound of $\frac{C}{4\pi|x-y|}$.
\end{proof}

\begin{proof}[Proof of Lemma~\ref{lem:goal}] 
Verification of the bound~\eqref{eq:unifWiener} proceeds in two steps.  First we show
that the inequality is satisfied uniformly in $x$ for each fixed $y \in \R^3$,
with an associated constant $C(y)$.  Then we demonstrate uniform boundedness
of $C(y)$.  While the two arguments employ a very similar mechanism, there
does not appear to be a way to accomplish both goals at the same time.
The first step will dominate the exposition as relevant methods and
technical hypotheses are introduced.

Given a point $y \in \R^3$, let $M_y$ denote pointwise multiplication by the
function $|\,\cdot\, - y|$.  The action of $M_y$ commutes with the Fourier
transform in the $t$ and $\lambda$ variables. Then~\eqref{eq:unifWiener}
can be re-written as
\begin{equation} \label{eq:unifWiener2}
\int_{\R^3} \int_{-\infty}^\infty
|\mathcal{F}(I + M_y R_0^+({\scriptstyle (\,\cdot\,)^2})V M_y^{-1})^{-1}|(t,x,z)
\,dt\, dz< C.
\end{equation}

Operator inverse estimates of this type can be approached by an
appropriate generalization of the Wiener algebra $\mc{F}(L^1(\R))$.  Let $X$
be a Banach lattice (in this case over the measure space $(\R^3, dx)$).

\begin{definition}
We say a family of operators $T(\lambda)$ with integral kernel 
$T(\lambda,x,z)$, $\lambda \in \R$, belongs 
to the space $\W_X$ if its Fourier transform $\mc{F}T(t,x,z)$ is a measure satisfying
\begin{equation} \label{eq:W_X}
\Big\| \int_{-\infty}^{\infty} |\mc{F}T(t,x,z)|\,dt \Big\|_{\B(X)} < \infty.
\end{equation}
\end{definition}
The quantity in~\eqref{eq:W_X} will be taken as the norm $\| T\|_{\W_X}$,
and it is straightforward to show that $\W_X$ is an algebra using
Fubini's theorem and the lattice properties of $X$.

In this language, \eqref{eq:unifWiener2} asserts that
$(I + M_yR_0(\lambda^2)VM_y^{-1})^{-1}$
belongs to $\W_{L^\infty}$, uniformly in $y\in\R^3$.

Observe that $M_yR_0^+(\lambda^2)VM_y^{-1}$ belongs to
$\W_{L^\infty}$ for each $y \in \R^3$.
To see this, note that
its Fourier transform has the kernel  $\frac{|x-y| V(z)\delta_0(t-|x-z|)}{4\pi|x-z||z-y|}$.
When integrated over $t$, the result has the property
\begin{equation} \label{eq:computation}
\begin{aligned}
\int_{\R^3}\int_{-\infty}^\infty &\frac{|x-y| |V(z)| \delta_0(t-|x-z|)}{4\pi |x-z|\,|z-y|}\,dt\,dz \\
&= \int_{\R^3} \frac{|x-y| |V(z)|}{4\pi |x-z|\,|z-y|}\,dz \\
&= \int_{\R^3} \frac{|x-y|}{4\pi(|x-z|+|z-y|)}\Big(\frac{|V(z)|}{|x-z|} + \frac{|V(z)|}{|z-y|}\Big)\,dz \\
&\leq (2\pi)^{-1}\|V\|_{\K} \quad \text{ for every } x \text{ and } y.
\end{aligned}
\end{equation}

The constant operator $T(\lambda) = I$ is the identity element of $\W_X$
for any $X$, so it certainly belongs to $\W_{L^\infty}$ as well.
The remaining questions are whether $I + M_yR_0^+(\lambda^2)VM_y^{-1}$
is an invertible element of $\W_{L^\infty}$ for each $y$, and whether
there is a uniform bound on the size of its inverse.

We borrow a result concerning existence of inverses in $\W_X$ from
\cite{becgol}.  To state it succinctly, define the ``high-frequency cutoff"
$S_R \in \B(\W_X)$ for each $R>0$ by the property
\begin{equation}
\mc{F}(S_R T)(t,x,z) = \mc{F} T(t,x,z)\chi_{|t| > R}.
\end{equation}

\begin{theorem}[\cite{becgol}, Proposition 10] \label{thm:inversion}
Suppose $T \in \W_X$ is such that
\renewcommand{\labelenumi}{(C\theenumi)}
\begin{enumerate}
\item $\lim\limits_{R\to\infty} \|S_R T\|_{\W_X} = 0$.  \label{C1}
\item For some $N > 0$, $\lim\limits_{\delta \to 0} 
 \|(e^{i\delta\lambda}-1)T^N(\lambda)\|_{\W_X} = 0$. \label{C2}
\end{enumerate}
If $I + T(\lambda)$ is an invertible element of $\B(X)$ 
for every $\lambda \in \R$, then $I + T$ is invertible in $\W_X$.
\end{theorem}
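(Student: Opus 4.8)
The statement is the operator-valued counterpart of Wiener's theorem that $1/f$ belongs to the Wiener algebra when $f$ does and $1+f$ never vanishes, and the plan is to run the classical ``local inverses plus partition of unity'' argument in this setting. I would first record two structural facts: $\W_X$ already contains the unit $I$ (as noted above), so ``invertible in $\W_X$'' means literally $(I+T)^{-1}\in\W_X$, and it suffices to produce a left inverse and a right inverse there, since pointwise invertibility in $\B(X)$ forces the two to agree. Conditions (C1) and (C2) play the roles of, respectively, a clean ``value at the point $\infty$'' in the one-point compactification $\R\cup\{\infty\}$ and the absolute continuity / continuity of translation that in the scalar case is encoded by $f\in\mc F L^1(\R)$ — with the wrinkle that (C2) demands this only of a power $T^N$, which is enough because convolving the transform $\mc F T$ with itself $N$ times smooths away the singular part carried by kernels such as $\delta_0(t-|x-z|)$.

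Two reductions come first. By (C1), after discarding a perturbation of arbitrarily small $\W_X$-norm I may assume $\mc F T$ is supported in a bounded $t$-interval, so that $\lambda\mapsto T(\lambda)$ extends to an entire function of exponential type; a Neumann series around the band-limited approximant recovers the general case. By (C2) the measure $\mc F(T^N)$ is absolutely continuous, so $\|T^N(\lambda)\|_{\B(X)}\to 0$ as $|\lambda|\to\infty$ by Riemann--Lebesgue; combined with the pointwise invertibility of $I+T(\lambda)$ and the $\B(X)$-continuity of $\lambda\mapsto(I+T(\lambda))^{-1}$ on compact sets, this gives $\sup_{\lambda\in\R}\|(I+T(\lambda))^{-1}\|_{\B(X)}<\infty$ together with a genuine limit of the inverse at $\infty$.

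Next, local parametrices. For each $\lambda_0\in\R$, factor $I+T(\lambda)=(I+T(\lambda_0))\bigl(I+(I+T(\lambda_0))^{-1}(T(\lambda)-T(\lambda_0))\bigr)$ on a short interval $J_{\lambda_0}$ and invert the second factor by a Neumann series; the input this needs is the estimate $\|\phi_{\lambda_0}(T^N-T(\lambda_0)^N)\|_{\W_X}\to 0$ as $|J_{\lambda_0}|\to 0$, uniformly in $\lambda_0$, for a bump $\phi_{\lambda_0}$ adapted to $J_{\lambda_0}$ — this is precisely the continuity of translation of $\mc F(T^N)$ from (C2), transported through the cutoff, and it forces the Neumann expansion to be organized in blocks of length $N$ so that only the $N$-th power ever gets estimated. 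For the point at infinity, $\|T^N(\lambda)\|_{\B(X)}\to 0$ lets a smooth cutoff $\psi_M$ equal to $1$ on $\{|\lambda|>M\}$ produce, via a mollifier argument, a $\W_X$-element $\psi_M T^N$ of norm $<1$, hence an invertible $I\pm\psi_M T^N$ that agrees with the relevant operator on $\{|\lambda|>M\}$; this yields a parametrix $G_\infty\in\W_X$ valid there. Finally cover the compact space $\R\cup\{\infty\}$ by finitely many of the $J_{\lambda_0}$ together with $\{|\lambda|>M\}$, choose a subordinate partition of unity $\{\psi_j\}$ with every $\psi_j$ in the scalar Wiener algebra $A(\R)=\mc F L^1(\R)\oplus\C$ (smooth bumps, and $1$ minus a bump near $\infty$, all qualify), and set $G=\sum_j\psi_j G_j$. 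Because each $\psi_j$ is a scalar, multiplication by it is bounded on $\W_X$ (it convolves the transform-side kernel by the finite measure $\mc F\psi_j$, using the lattice structure of $X$), so $G\in\W_X$; and since $G_j(\lambda)(I+T(\lambda))=I$ throughout $\supp\psi_j$ we get $G(\lambda)(I+T(\lambda))=\sum_j\psi_j(\lambda)I=I$ identically. A mirror-image construction gives a right inverse, completing the proof.

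I expect the main obstacle to be the quantitative localization just described, and in particular making the entire local argument run with the power $T^N$ in place of $T$ (which is why (C2) is phrased for $T^N$). A cutoff in $\lambda$ is harmless on each fibre $I+T(\lambda)$ but acts by convolution in the dual variable $t$, so pointwise-in-$\lambda$ smallness of a difference $T(\lambda)-T(\lambda_0)$ does not transfer to the $\W_X$-norm; one has to extract it from $\mc F(T^N)\in L^1$ (from (C2)) and the band-limiting (from (C1)), uniformly in the base point $\lambda_0$, and keep careful track of how only the $N$-th power — whose transform is absolutely continuous — may appear in the estimated terms. Reconciling these is the technical heart of the argument; the partition-of-unity bookkeeping and the point at infinity are then routine.
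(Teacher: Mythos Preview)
The paper does not prove Theorem~\ref{thm:inversion}; it is quoted verbatim from \cite{becgol} (Proposition~10 there) and used as a black box, so there is no argument in this paper to compare your sketch against.

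For what it is worth, your outline---local Neumann-series parametrices around each $\lambda_0$, a separate parametrix near $\lambda=\infty$ obtained from the decay of $T^N$, and a partition of unity on the one-point compactification $\R\cup\{\infty\}$ with cutoffs drawn from the scalar Wiener algebra---is exactly the scheme used in \cite{becgol} and its predecessor \cite{bec}. You have also correctly identified the one genuinely delicate point: condition~(C2) only gives translation-continuity (hence absolute continuity and Riemann--Lebesgue decay) for the transform of the \emph{power} $T^N$, not of $T$ itself, so the local Neumann expansions must be arranged in blocks of length $N$ so that every term requiring a smallness estimate contains at least one full factor of $T^N-T(\lambda_0)^N$. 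That reorganization, together with the uniformity of the local estimates in $\lambda_0$ needed to extract a finite subcover, is indeed where the work lies in \cite{becgol}; the rest is bookkeeping.
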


Unfortunately, $M_yR_0^+(\lambda^2)VM_y^{-1}$ does not satisfy 
condition~(C\ref{C1}) in our Inversion Theorem.  More specifically
\begin{equation*}
\|S_R(M_yR_0^+VM_y^{-1})\|_{\W_{L^\infty}} \geq
\liminf_{|x| \to \infty} \int_{|x-z| > R} \frac{|x-y|V(z)}{4\pi |x-z|\,|z-y|}\,dz 
\geq \int_{\R^3} \frac{|V(z)|}{4\pi |z-y|}\,dz
\end{equation*}
by Fatou's lemma.

For this reason we expand the inverse using the identity
\begin{equation} \label{eq:switch}
(I + M_yR_0^+VM_y^{-1})^{-1} = I + M_yR_0^+M_y^{-1}
(I + M_yVR_0^+M_y^{-1})^{-1} V.
\end{equation}
We will apply Theorem~\ref{thm:inversion} to show that
$(I + M_yVR_0^+M_y^{-1})^{-1} \in \W_{\K}$.
Meanwhile $V$ clearly maps from $L^\infty$ to $\K$.  Another calculation nearly
identical to~\eqref{eq:computation} shows that 
$M_y R_0^+(\lambda^2)M_y^{-1}$  maps $\W_\K$ to $\W_{L^\infty}$
because
\begin{multline*}
\sup_x \Big\| \int_{-\infty}^\infty |\mc{F}(M_yR_0^+({\scriptstyle (\,\cdot\,)^2})M_y^{-1}|(t,x,z)
\,dt \Big\|_{\K^*_z} 
= \sup_x \Big\|\frac{|x-y|}{4\pi |x-z|\,|z-y|}\Big\|_{\K^*_z} \\
\leq \sup_x \Big\| \frac{|x-y|}{4\pi(|x-\,\cdot\,| + |\,\cdot\,-y|)}\Big\|_{\infty} 
\Big\|\Big(\frac{1}{|x-\,\cdot\,|} +\frac{1}{|\,\cdot\,-y|}\Big)\Big\|_{\K^*} \leq (2\pi)^{-1}.
\end{multline*}

The operator $M_yVR_0^+(\lambda^2)M_y^{-1}$
belongs to $\W_\K$ for each $y$, with norm bounded by
$(2\pi)^{-1} \|V\|_\K$.  
We will show that it meets all criteria of Theorem~\ref{thm:inversion}.

We present the main conditions of the Inversion Theorem on $\W_\K$
that need to be checked as a lemma to be proven later.
\begin{lemma} \label{lem:conditions}
If $V \in \K_0$, then for each $y \in \R^3$,
\begin{equation} \label{eq:Kstarbound}
\Big\| \frac{\chi_{\{|x-\,\cdot\,| > R\}}}{|x-\,\cdot\,|\,|\,\cdot\,-y|}\Big\|_{\K^*}
\leq 2 \max(|x-y|,R)^{-1}
\end{equation}
and
\begin{equation} \label{eq:Kbound}
\lim_{R \to \infty} \Big\| \frac{ |\,\cdot\, - y| V( \,\cdot\,)}{\max(|\,\cdot\, - y|,R)}\Big\|_{\K} = 0.
\end{equation} 
Furthermore, if $V$ is bounded with compact support then there exists $N < \infty$ such that 
\begin{equation} \label{eq:highenergy}
\| M_y (R_0^+(\lambda^2)V)^{N-1} M_y^{-1}\|_{\B(L^\infty)} \les (1+ |\lambda|)^{-3}.
\end{equation}
Finally, if $V \in \K_0$ is real valued and $-\Delta + V$ has no resonance at zero and
no nonnegative eigenvalues, then $I+ M_yVR_0^+(\lambda^2)M_y^{-1}$ is invertible in $\B(\K)$
for all $y \in \R^3$ and $\lambda \in \R$. 
\end{lemma}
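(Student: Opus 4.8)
The plan is to dispatch the four assertions separately, in increasing order of difficulty. Bounds~\eqref{eq:Kstarbound} and~\eqref{eq:Kbound} are elementary. For~\eqref{eq:Kstarbound} I would observe that on $\{|x-z|>R\}$ one has $|x-z|+|z-y|\ge\max(R,|x-y|)$ (the first comparison is immediate, the second is the triangle inequality), and combine this with the identity $\frac{1}{|x-z|\,|z-y|}=\frac{1}{|x-z|+|z-y|}\bigl(\frac{1}{|x-z|}+\frac{1}{|z-y|}\bigr)$ to obtain the pointwise domination $\frac{\chi_{\{|x-z|>R\}}}{|x-z|\,|z-y|}\le\frac{1}{\max(R,|x-y|)}\bigl(\frac{1}{|x-z|}+\frac{1}{|z-y|}\bigr)$; since $\K^*$ is a Banach lattice and $\bigl\|\,|x-\cdot\,|^{-1}\bigr\|_{\K^*}\le1$ for every $x$ (this is just the definition of the Kato norm read dually), monotonicity and the triangle inequality in $\K^*$ give~\eqref{eq:Kstarbound} with constant $2$. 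For~\eqref{eq:Kbound} I would write the weight as $w_R(z):=\min(1,|z-y|/R)\in[0,1]$: if $V\in C^\infty_c$ with $\supp V\subseteq B(y,\rho)$ then $w_R\le\rho/R$ there, so $\|w_RV\|_\K\le(\rho/R)\|V\|_\K\to0$, and the general case $V\in\K_0$ follows by approximating $V$ in Kato norm by $C^\infty_c$ functions.

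For the high-energy bound~\eqref{eq:highenergy} I would first rewrite $M_y(R_0^+(\lambda^2)V)^{N-1}M_y^{-1}=(M_yR_0^+(\lambda^2)M_y^{-1}\,V)^{N-1}$, which is legitimate because multiplication by $V$ commutes with $M_y$, and then expand its kernel as an $(N-2)$-fold oscillatory integral with phase $\Phi=|x-x_1|+|x_1-x_2|+\dots+|x_{N-2}-z|$, amplitude a product of the factors $\frac{V(x_j)}{4\pi|x_{j-1}-x_j|}$, and prefactor $\frac{|x-y|}{|z-y|}V(z)$. It suffices to treat $V\in C^\infty_c$, the only case needed for the later verification of condition~(C\ref{C2}), so the amplitude is smooth. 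The critical set of $\Phi$ is where the vertices $x_j$ all lie, in order, on the segment $[x,z]$; away from a thin tube around it one integrates by parts in each $x_j$, gaining a factor $|\lambda|^{-1}$ per pass with derivatives landing only on the smooth amplitude, while the tube itself has small measure --- equivalently, transverse stationary phase at each intermediate vertex supplies a factor $|\lambda|^{-1}$. Choosing $N$ large enough that the accumulated gain dominates $(1+|\lambda|)^3$ gives~\eqref{eq:highenergy}; the weights $\frac{|x-y|}{|z-y|}$ combine with the chain of resolvent factors and are absorbed exactly as in~\eqref{eq:computation}, with an implied constant permitted to depend on the fixed $y$ through $\supp V$.

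The invertibility claim is where the real work lies. Writing $I+M_yVR_0^+(\lambda^2)M_y^{-1}=M_y\bigl(I+VR_0^+(\lambda^2)\bigr)M_y^{-1}$, and noting that since $V$ is real-valued the case $\lambda<0$ follows from $\lambda\ge0$ by complex conjugation, I would argue by the Fredholm alternative on $\K$. For $V\in C^\infty_c$ the operator $M_yVR_0^+(\lambda^2)M_y^{-1}$ is compact on $\K$: its range is supported in the compact set $\supp V$, where it is a weakly singular integral operator in $x$ whose $z$-integral over $\{|z|>R\}$ has $\B(\K)$-norm $O(R^{-1})$ (because $\int_{|z|>R}\frac{|g(z)|}{|x-z|\,|z-y|}\,dz\les R^{-1}\|g\|_\K$ for $x\in\supp V$), so it is a norm limit of weakly singular operators with compactly supported kernels and hence compact; for general $V\in\K_0$ one uses $\|M_y(V-V_0)R_0^+(\lambda^2)M_y^{-1}\|_{\B(\K)}\les\|V-V_0\|_\K$ (the computation just before the lemma) and the closedness of the compacts. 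Thus $I+M_yVR_0^+(\lambda^2)M_y^{-1}$ is Fredholm of index zero, and it remains to prove injectivity. Given a null vector $f\in\K$, set $h:=M_y^{-1}f$ and $u:=R_0^+(\lambda^2)h$; unwinding the relation gives $h=-VR_0^+(\lambda^2)h=-Vu$, so $u$ solves $(-\Delta+V-\lambda^2)u=0$ distributionally and obeys the outgoing radiation condition, while the estimate $|u(x)|\le\int\frac{|h(z)|}{4\pi|x-z|}\,dz\les|x-y|^{-1}\|f\|_\K$ (again via $|x-z|+|z-y|\ge|x-y|$) shows $|u(x)|\les|x-y|^{-1}$ and hence $Vu\in L^1(\R^3)$. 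For $\lambda=0$ this makes $u$ a zero-energy eigenfunction or resonance of $H$, excluded by hypothesis; for $\lambda>0$ it makes $u$ an eigenfunction at energy $\lambda^2$ (excluded: no positive eigenvalues) or an embedded resonance (excluded because $V\in\K_0$, by~\cite{goldberg}; cf.~\cite{ioje},~\cite{kota}). Either way $u\equiv0$, so $h=-Vu\equiv0$ and $f=|\cdot-y|\,h\equiv0$. Injectivity together with Fredholm index zero yields invertibility.

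I expect the injectivity step to be the main obstacle: the content is to convert a null vector --- a priori only an element of a weighted Kato space --- into a genuine distributional solution of $(-\Delta+V-\lambda^2)u=0$ lying in precisely the decay class excluded by the imported results on the absence of embedded and zero-energy obstructions, and the pointwise bound $|u(x)|\les|x-y|^{-1}$ is what simultaneously performs this identification and produces the integrability $Vu\in L^1$ needed to invoke that theory. The oscillatory-integral estimate~\eqref{eq:highenergy} is the other technically substantial ingredient, but it is a routine stationary-phase/integration-by-parts computation; \eqref{eq:Kstarbound} and~\eqref{eq:Kbound} are elementary.
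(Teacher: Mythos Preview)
Your arguments for~\eqref{eq:Kstarbound} and~\eqref{eq:Kbound} are correct and match the paper's essentially line for line.

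For~\eqref{eq:highenergy} there is a genuine gap. Your amplitude $\prod_j\frac{V(x_j)}{4\pi|x_{j-1}-x_j|}$ is \emph{not} smooth: even with $V\in C^\infty_c$, the factors $|x_{j-1}-x_j|^{-1}$ are singular along the diagonals, and those diagonals sit inside the critical manifold of $\Phi$ (the configuration $x_1=\dots=x_{N-2}=x$ lies on the segment from $x$ to $z$). So neither ``integrate by parts with derivatives landing on the smooth amplitude'' nor ``transverse stationary phase at each vertex'' goes through as written. The paper avoids oscillatory integrals altogether: it invokes Agmon's weighted $L^2$ bound $\|(1+|x|)^{-1}R_0^+(\lambda^2)(1+|x|)^{-1}\|_{\B(L^2)}\les(1+|\lambda|)^{-1}$, sandwiches three copies of $VR_0^+(\lambda^2)$ on $L^2(\supp V)$ between the maps $VM_y^{-1}:L^\infty\to L^2(\supp V)$ and $M_yR_0^+(\lambda^2):L^2(\supp V)\to L^\infty$ (whose norms are $\les(1+|y|)^{\mp1}$), and obtains~\eqref{eq:highenergy} with the explicit value $N=5$.

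For invertibility, your compactness argument is a reasonable alternative to the paper's (which routes through $W^{2,1}(\supp V_\epsilon)\hookrightarrow L^2$). The injectivity step for $\lambda\neq0$ is where you and the paper diverge. Your pointwise bound $|u(x)|\les|x-y|^{-1}$ does not give $u\in L^\infty$ (it permits a singularity at $y$), so the black-box appeal to ``no embedded resonances for $V\in\K_0$'' needs you to check that the cited result actually excludes outgoing solutions in this specific decay class; you have not done this, and for instance the pairing $\langle Vu,u\rangle$ is not obviously finite. The paper closes this gap by bootstrapping: writing $M_y^{-1}\phi=-(I+(V-V_\epsilon)R_0^+)^{-1}V_\epsilon R_0^+M_y^{-1}\phi$ with $\|V-V_\epsilon\|_\K$ small upgrades $M_y^{-1}\phi$ from $L^1$ to $\K$, hence $\psi=R_0^+(\lambda^2)M_y^{-1}\phi\in L^\infty$. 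Then $\langle V\psi,\psi\rangle$ is a legitimate $L^1$--$L^\infty$ pairing, its reality forces $\widehat{V\psi}$ to vanish on the sphere $|\lambda|S^2$, and~\cite[Corollary 13]{golsch2} yields $\psi\in L^2$, contradicting the absence of positive eigenvalues.
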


Assuming Lemma~\ref{lem:conditions}, let us verify that $M_y V R_0^+(\lambda^2)M_y^{-1}$ 
satisfies Condition~(C\ref{C1}).
A direct calculation shows that its Fourier transform has integral kernel
$$
K_y(t,x,z) = \frac{|x-y|V(x) \delta_0(t-|x-z|)}{4\pi |x-z| |z-y|}.
$$
Applying~\eqref{eq:Kstarbound} and~\eqref{eq:Kbound} in turn demonstrates that
the norm of $\int_\R| K_y|\chi_{\{|t|>R\}}\,dt$ as an element of $\B(\K)$
converges to zero as $R$ increases to infinity.

To verify that $M_y V R_0^+(\lambda^2)M_y^{-1}$ satisfies 
Condition~(C\ref{C2}), we imitate an argument from Section~4
of~\cite{becgolschr}.  Choose a bounded compactly supported approximation
$V_\epsilon$ with $\|V-V_\epsilon\|_{\K} < \epsilon$ and 
$\|V_\epsilon\|_{\K} \leq \|V\|_{\K}$.  Then choose $R$ large enough
so that
\begin{equation*}
\big\| S_R(M_yV_\epsilon R_0^+({\scriptstyle (\,\cdot\,)^2})M_y^{-1})\|_{\W_\K} < 
\epsilon\big({\textstyle \frac{\|V\|_{\K}}{\pi}}\big)^{1-N}.
\end{equation*}
It follows that for $\tilde{R} \geq NR$,
\begin{multline} \label{eq:S_NR}
\big\|  S_{\tilde{R}}(M_yV_\epsilon R_0^+({\scriptstyle (\,\cdot\,)^2})M_y^{-1})^N\|_{\W_\K}
\\
\begin{aligned}&\leq \big\| \big(M_yV_\epsilon R_0^+({\scriptstyle (\,\cdot\,)^2})M_y^{-1}\big)^N
- \big((I - S_R)(M_yV_\epsilon R_0^+({\scriptstyle (\,\cdot\,)^2})M_y^{-1})\big)^N
\big\|_{\W_{\K}}
\\
& < N\epsilon.
\end{aligned}
\end{multline}
The first inequality holds because the Fourier transform of both sides agree when $|t| > \tilde{R}$,
and the left side vanishes completely when $|t| \leq \tilde{R}$.  The second inequality
follows from expanding out the difference of $N^{\rm th}$ powers and observing that
\begin{equation*}
\max\big(\|(I-S_R)(M_yV_\epsilon R_0^+M_y^{-1})\|_{\W_\K},\,
\| M_yV_\epsilon R_0^+M_y^{-1}\|_{\W_\K}\big) \leq \pi^{-1}\|V_\epsilon\|_{\K}
\end{equation*}
while by construction $\|S_R(M_yV_\epsilon R_0^+M_y^{-1})\|_{\W_\K}
 < \epsilon(\pi^{-1}\|V\|_\K)^{1-N}$.

Based on the definition of $S_R$, it is also clear that for any $\delta < R $ and any 
$T \in \W_X$, $\| S_R(e^{i\delta \lambda} T)\|_{\W_X} \leq \| S_{R-\delta} T\|_{\W_X}$.

In order to take full advantage of~\eqref{eq:highenergy}, note that
\begin{equation*}
M_y(V_\epsilon R_0^+(\lambda^2))^N M_y^{-1} = V_\epsilon [M_y(R_0^+(\lambda^2) V_\epsilon)^{N-1}M_y^{-1}]
M_yR_0^+(\lambda^2)M_y^{-1}.
\end{equation*}
The bracketed expression on the right side has an integral kernel $K_y(\lambda,x,w)$ with the
property that $\| \int_{\R^3} |K_y(\lambda , x, w)|\,dw \|_{L^\infty_x} \les (1+|\lambda|)^{-3}$.
Multiplication by $V_\epsilon(x)$ allows us to replace $L^\infty_x$ with the Kato norm in $x$, and for
each $\lambda \in \R$ we can estimate the kernel of $M_yR_0^+(\lambda^2)M_y^{-1}$
as follows:
\begin{equation*}
\sup_w \Big\|\frac{|w-y| e^{i\lambda|w-\,\cdot\,|}}{4\pi|w-\,\cdot\,|\,|\,\cdot - y|} \Big\|_{\K^*}
\leq \pi^{-1}
\end{equation*}

This shows that not only is $\|M_y(V_\epsilon R_0^+(\lambda^2))^NM_y^{-1}\|_{\B(\K)} \les
(1+|\lambda|)^{-3}$, but the same is true if the integral kernel of this operator is replaced by its
absolute value. Consequently
\begin{equation*}
\big\|  \mc{F}\big(e^{i\delta(\,\cdot\,)} - 1) M_y(V_\epsilon R_0^+({\scriptstyle(\,\cdot\,)^2}))^NM_y^{-1}\big)(t) \big\|_{\B(\K)}
\les \int_\R |e^{i\delta \lambda}-1| (1+|\lambda|)^{-3}\,d\lambda \les \delta 
\end{equation*}
by the generalized triangle inequality.  Furthermore, the same bound in $\B(\K)$ holds if the 
integral kernel of this operator is replaced by its absolute value.  Applying the triangle inequality
once more shows that
\begin{equation*}
\big\| (I - S_{\tilde{R}}) \big((e^{i\delta\lambda}-1) M_y(V_\epsilon R_0^+)^NM_y^{-1}\big) \big\|_{\W_\K}
\les \delta \tilde{R}.
 \end{equation*}

If $\tilde{R} > NR + 1$ and $\delta < 1$, then we may conclude from~\eqref{eq:S_NR} that
the individual expressions
$\|S_{\tilde{R}}(M_y(V_\epsilon R_0^+)^NM_y^{-1})\|_{\W_\K}$ and 
$\|S_{\tilde{R}}(e^{i\delta\lambda}M_y(V_\epsilon R_0^+)^NM_y^{-1})\|_{\W_\K}$
are each less than $N\epsilon$.  It follows that
\begin{equation*}
\limsup_{\delta \to 0} \big\| (e^{i\delta \lambda} - 1)M_y(V_\epsilon R_0^+(\lambda^2))^NM_y^{-1}\big\|_{\W_\K}
\leq 2N\epsilon.
\end{equation*}

Finally, the mapping from $V \in \K$ to
$M_y(VR_0^+(\lambda^2))^NM_y^{-1} \in \W_\K$
is uniformly continuous as $V$ ranges over a bounded subset of $\K$.
Let $\eta$ denote the modulus of continuity.  Since multiplying by $e^{i\delta \lambda}$ is an isometry of $\W_\K$
for fixed $\delta$, the map from $V$ to $(e^{i\delta \lambda}-1)M_y(VR_0^+(\lambda^2))^NM_y^{-1}$
is uniformly continuous for any fixed $\delta$, and has $2\eta$ as a modulus of continuity.
Allowing $V_\epsilon$ to approach a given potential $V \in \K_0$  shows that
\begin{equation*}
\limsup_{\delta \to 0} \big\| (e^{i\delta \lambda} - 1)M_y(V R_0^+(\lambda^2))^NM_y^{-1}\big\|_{\W_\K}
\leq 2N\epsilon + 2\eta(\epsilon)
\end{equation*}
for all $\epsilon > 0$.  The details of $\eta$ depend on the size of $\| V\|_\K$, however in all cases 
one may take the limit as $\epsilon \to 0$ and conclude that~(C\ref{C2}) is satisfied.

Now all of the conditions of  Theorem~\ref{thm:inversion} are satisfied, so 
$(I + M_yVR_0^+(\lambda^2)M_y^{-1})^{-1}$ exists as an element of
$\W_\K$ for each $y \in \R^3$.
Identity~\eqref{eq:switch} allows us to conclude that 
$(I + M_yR_0^+(\lambda^2) VM_y^{-1})^{-1} \in \W_{L^\infty}$
as well.  It remains to show that these elements are uniformly
bounded with respect to $y$.  This follows directly from the claim below, whose proof is postponed to the end of this section.

\begin{lemma} \label{lem:continuity}
If $V \in \K_0$, then the mapping from $y \in \R^3$ 
to $M_yR_0^+(\lambda^2)VM_y^{-1} \in \W_{L^\infty}$ is continuous in the operator norm.
Furthermore this map is ``continuous at infinity" with a limiting value of
$R_0^+(\lambda^2)V \in \W_{L^\infty}$ as $|y| \to \infty$.
\end{lemma}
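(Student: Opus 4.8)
The plan is to reduce the claim to an explicit scalar estimate on kernels. Since the sine-propagator structure fixes the time delay, for every $y\in\R^3$ the Fourier transform of $M_yR_0^+((\,\cdot\,)^2)VM_y^{-1}$ in the variable $t$ dual to $\lambda$ is supported on the single hypersurface $\{t=|x-z|\}$, with density $\frac{|x-y|\,V(z)}{4\pi|x-z|\,|z-y|}$, and likewise $R_0^+((\,\cdot\,)^2)V$ has density $\frac{V(z)}{4\pi|x-z|}$ there. Subtracting two such operators, integrating the absolute value in $t$, and using that the $\B(L^\infty)$-norm of a nonnegative kernel $K(x,z)$ is $\sup_x\int K(x,z)\dd z$, one sees that the $\W_{L^\infty}$-distance between $M_{y_1}R_0^+((\,\cdot\,)^2)VM_{y_1}^{-1}$ and $M_{y_2}R_0^+((\,\cdot\,)^2)VM_{y_2}^{-1}$ equals
\[
\sup_{x\in\R^3}\int_{\R^3}\frac{|V(z)|}{4\pi|x-z|}\,\Bigl|\frac{|x-y_1|}{|z-y_1|}-\frac{|x-y_2|}{|z-y_2|}\Bigr|\dd z ,
\]
and the distance from $M_yR_0^+((\,\cdot\,)^2)VM_y^{-1}$ to $R_0^+((\,\cdot\,)^2)V$ is the same expression with the bracket replaced by $\bigl|\tfrac{|x-y|}{|z-y|}-1\bigr|$. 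So the whole lemma reduces to smallness of these integrals. By~\eqref{eq:computation} the linear map $W\mapsto M_yR_0^+((\,\cdot\,)^2)WM_y^{-1}$, as well as its $|y|=\infty$ version $W\mapsto R_0^+((\,\cdot\,)^2)W$, is bounded from $\K$ into $\W_{L^\infty}$ with norm at most $(2\pi)^{-1}$, uniformly in $y$; so by the triangle inequality and density of $C^\infty_c(\R^3)$ in $\K_0$ it is enough to treat $V\in C^\infty_c(\R^3)$, with $\supp V\subset B$.

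For continuity at infinity, I would use the pointwise bound $\bigl|\tfrac{|x-y|}{|z-y|}-1\bigr|=\tfrac{\bigl||x-y|-|z-y|\bigr|}{|z-y|}\le\tfrac{|x-z|}{|z-y|}$, which cancels the factor $\tfrac1{|x-z|}$ and removes all dependence on $x$, leaving
\[
\bigl\|M_yR_0^+((\,\cdot\,)^2)VM_y^{-1}-R_0^+((\,\cdot\,)^2)V\bigr\|_{\W_{L^\infty}}\le\frac1{4\pi}\int_{\R^3}\frac{|V(z)|}{|z-y|}\dd z\le\frac{\|V\|_\infty\,|B|}{4\pi\,\mathrm{dist}(y,B)}\xrightarrow[\ |y|\to\infty\ ]{}\ 0 .
\]

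For continuity at a finite point $y_0$, fix $\rho>0$ and take $|y_1-y_0|,|y_2-y_0|<\rho/2$, so the whole segment $[y_1,y_2]$ lies within $\rho/2$ of $y_0$, and split the $z$-integral at $|z-y_0|=\rho$. On the inner region one discards the difference in favour of the sum and bounds $\tfrac{|x-y_j|}{|x-z|\,|z-y_j|}\le\tfrac1{|x-z|}+\tfrac1{|z-y_j|}$ via $|x-y_j|\le|x-z|+|z-y_j|$; since $|z-y_0|\le\rho$ forces $|z-y_j|\le2\rho$, and both $\int_{|w|\le2\rho}\frac{\dd w}{|w|}$ and $\sup_x\int_{|z-y_0|\le\rho}\frac{\dd z}{|x-z|}$ are $\les\rho^2$, the inner region contributes $\les\|V\|_\infty\rho^2$, uniformly in $x,y_1,y_2$. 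On the outer region $|z-y_0|>\rho$ one has $|z-y|>\rho/2$ along the segment, so $y\mapsto\tfrac{|x-y|}{|z-y|}$ is $C^1$ there with $\bigl|\nabla_y\tfrac{|x-y|}{|z-y|}\bigr|\le\tfrac1{|z-y|}+\tfrac{|x-y|}{|z-y|^2}\le\tfrac2{|z-y|}+\tfrac{|x-z|}{|z-y|^2}\le\tfrac4\rho+\tfrac{4|x-z|}{\rho^2}$, whence the mean value theorem gives $\bigl|\tfrac{|x-y_1|}{|z-y_1|}-\tfrac{|x-y_2|}{|z-y_2|}\bigr|\le|y_1-y_2|\bigl(\tfrac4\rho+\tfrac{4|x-z|}{\rho^2}\bigr)$; the factor $\tfrac1{|x-z|}$ then makes the outer region contribute $\les\|V\|_\infty|y_1-y_2|\bigl(\rho^{-1}\sup_x\int_B\tfrac{\dd z}{|x-z|}+\rho^{-2}|B|\bigr)$, which for fixed $\rho$ tends to $0$ as $y_1\to y_2$. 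Given $\varepsilon>0$, I would first choose $\rho$ so the inner bound is $<\varepsilon/2$ and then $|y_1-y_2|$ small enough that the outer bound is $<\varepsilon/2$; this gives continuity at the arbitrary point $y_0$.

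The delicate point — the one I expect to require care — is keeping every bound uniform in $x\in\R^3$, in particular for $x$ far from $y_0$ and from $\supp V$, while absorbing the two singularities of the integrand, at $z=x$ and at $z=y_j$. The facts that make this work are the elementary $\sup_x\int_{|z-w|\le\rho}\frac{\dd z}{|x-z|}\les\rho^2$ and, in a formulation that bypasses the density reduction and works directly for $V\in\K_0$, the small- and large-scale smallness of the Kato integral, $\sup_w\int_{|z-w|<\rho}\frac{|V(z)|}{|z-w|}\dd z\to0$ as $\rho\to0$ and $\int\frac{|V(z)|}{|z-w|}\dd z\to0$ as $|w|\to\infty$; these follow from density of $C^\infty_c$ in $\K_0$. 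With those in hand the rest is routine bookkeeping.
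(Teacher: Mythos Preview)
Your argument is correct. The reduction to the explicit kernel integral is exactly right, and crucially relies on the fact that the Fourier transforms for different $y$ are all supported on the \emph{same} hypersurface $\{t=|x-z|\}$, so the absolute value of the difference equals the difference of densities; you observe this cleanly. The limit at infinity is handled identically to the paper, via the reverse triangle inequality $\bigl||x-y|-|z-y|\bigr|\le|x-z|$ to kill the $x$-dependence.

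For continuity at a finite point the two arguments genuinely diverge. The paper splits the operator difference algebraically as
\[
(M_{y_2}-M_{y_1})R_0^+VM_{y_1}^{-1}+M_{y_2}R_0^+V(M_{y_2}^{-1}-M_{y_1}^{-1}),
\]
disposes of the first term by the Lipschitz bound $\bigl||x-y_2|-|x-y_1|\bigr|\le|y_2-y_1|$, and for the second term makes a case distinction according to whether $y$ is near or far from $\supp V$, invoking Cauchy--Schwarz in the near case to obtain an explicit H\"older-$\tfrac12$ modulus $|y_2-y_1|^{1/2}$. Your approach instead splits the \emph{spatial} integral geometrically into $\{|z-y_0|\le\rho\}$ and its complement, bounds the inner region crudely by $\|V\|_\infty\rho^2$, and on the outer region applies the mean value theorem to $y\mapsto|x-y|/|z-y|$. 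This is more elementary (no H\"older, no case analysis on the position of $y$) and handles the singularity at $z=y_j$ uniformly in $x$ in one stroke; the trade-off is that you obtain only qualitative continuity rather than a quantitative rate. Since the lemma is used solely to deduce boundedness of the inverse over the one-point compactification of $\R^3$, the rate is irrelevant and nothing is lost.
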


It is shown in~\cite{becgolschr} that $(I + R_0^+(\lambda^2)V)^{-1} \in \W_{L^\infty}$
(more precisely, Theorem 2 asserts that the dual operator on $\R^3$  belongs to $\W_{L^1}$).
As a corollary, the mapping from $y$ to $(I + M_yR_0^+VM_y^{-1})^{-1}$
is continuous on the one-point compactification of $\R^3$, so its range is bounded.
According to~\eqref{eq:unifWiener2}, this is precisely what is required to prove
Lemma~\ref{lem:goal}.
\end{proof}

\begin{proof}[Proof of Lemma~\ref{lem:conditions}]
Fix $R < \infty$.  By straightforward algebra,
\begin{align*}
\frac{\chi_{\{|x-z| > R\}}}{|x-z|\,|z-y|} &= 
\frac{\chi_{\{|x-z|>R\}}}{|\,|x-z| + |z-y|\,|} \Big(\frac{1}{|x-z|} + \frac{1}{|z-y|}\Big) \\
&\leq \frac{1}{\max(|x-y|,R)} \Big(\frac{1}{|x-z|} + \frac{1}{|z-y|}\Big).
\end{align*}
Both functions $|x-\,\cdot\,|^{-1}$ and $|\,\cdot\, - y|^{-1}$ have unit size
in the $\K^*$ norm, so~\eqref{eq:Kstarbound} follows from the triangle inequality.

The subsequent bound~\eqref{eq:Kbound} is nearly trivial if $V$ has compact support.
In that case, if we define $M = \max_{x \in \text{supp}\,V} |x-y|$, then the function in
question is bounded pointwise by $\frac{M}{R} V$ for all $R > M$.  Its norm is then
less than $\frac{M}{R} \|V\|_{\K}$.  The limit value as $R \to \infty$ is preserved
if one approximates a general potential $V \in \K_0$ by compactly supported
functions.

For the high-energy resolvent estimate, start by assuming $V$ is bounded with compact support.
Then $VM_y^{-1}$ maps $L^\infty(\R^3)$ to $L^2(\text{supp}\,V)$, with norm
bounded by $(1+|y|)^{-1}$.
It is well known~\cite{Ag} that $(1+|x|)^{-1}R_0^+(\lambda^2)(1+|x|)^{-1}$
belongs to $\B(L^2(\R^3))$ with norm less than $(1 + |\lambda|)^{-1}$, so in this case
$(VR_0^+(\lambda^2))^3$
maps $L^2(\text{supp}\,V)$ to itself with norm controlled by $(1+|\lambda|)^{-3}$.
We are suppressing constants which depend on the size and support of $V$.

The integral kernel of $M_yR_0^+(\lambda^2)$ has pointwise size $\frac{|x-y|}{4\pi|x-z|}$.
By Cauchy-Schwartz, this maps $L^2(\text{supp}\,V)$ to $L^\infty(\R^3)$ with norm at most
$\sup_x \frac{C|x-y|}{1+|x|} \les 1+|y|$.  Composing all of these operators together
yields the cumulative bound
\begin{equation*}
\| M_y(R_0^+(\lambda^2)V)^4M_y^{-1}\|_{\B(L^\infty)} \les (1+|\lambda|)^{-3}. 
\end{equation*}

Showing that $I + M_yVR_0^+(\lambda^2)M_y^{-1}$
is an invertible element of $\B(\K)$ for each $\lambda \in \R$ 
follows a common collection of arguments.

Let $V_\epsilon$ be an approximation of $V$ the belongs to $C^\infty_c(\R^3)$.  Since $M_y^{-1}$
maps $\K$ to $L^1(\R^3)$, we have that $V_\epsilon R_0^+(\lambda^2)M_y^{-1}$ belongs to
$\B(\K, W^{2,1}(\text{supp}\,V))$, which is a compact operator in $\B(\K, L^2(\text{supp}\,V))$.
Then $M_y$ is a bounded map from $L^2(\text{supp}\,V)$ back to $\K$.  Taking limits shows that
$M_yVR_0^+(\lambda^2)M_y^{-1}$ is compact in $\B(\K)$ so long as $V \in \K_0$.

Now suppose that $I + M_yVR_0^+(\lambda^2)M_y^{-1}$ fails to be invertible, which implies the
existence of a nontrivial $\phi \in \K$ in its nullspace by the Fredholm Alternative.  Then
$\psi = R_0^+(\lambda^2)M_y^{-1}\phi$ satisfies the eigenvalue equation
\begin{equation*}
(-\Delta + V - \lambda^2)\psi = 0.
\end{equation*}

If $\lambda = 0$, the fact that $M_y^{-1}\phi \in L^1(\R^3)$ suffices to show that
$\psi \in (1+|x|)^{s}L^2(\R^3)$ for all $s > \frac12$, which means that zero is a resonance
and possibly an eigenvalue.  This case is explicitly ruled out by the spectral assumption in
Lemma~\ref{lem:goal}.

If $\lambda \not=0$, we follow arguments for the Limiting Absorption Principle to show
the presence of an eigenvalue.  Once again let $V_\epsilon$ be a bounded compactly supported
approximation of $V$.  Then $V_\epsilon R_0^{-1}M_y^{-1}\phi \in \K$, and if
$\|V - V_\epsilon\|_\K$ is small enough then
\begin{equation*}
M_y^{-1}\phi = -(I + (V-V_\epsilon)R_0^+(\lambda^2))^{-1} V_\epsilon R_0^+(\lambda^2)M_y^{-1}\phi
\end{equation*}
is also a well defined element of $\K$, with the operator inverse in $\B(\K)$ 
existing via a convergent Neumann series.  Consequently $\psi \in L^\infty(\R^3)$.

That allows a natural pairing between $\psi \in L^\infty$ and $V\psi = -M_y^{-1}\phi \in L^1$,
with the result
\begin{equation*}
0 = \text{Re}\,\langle V\psi, \psi\rangle
 = -\text{Re}\, \langle M_y^{-1}\phi, R_0^+(\lambda^2)M_y^{-1}\phi\rangle.
\end{equation*}
Hence the Fourier transform of $V\psi$ vanishes on the sphere $|\lambda|S^2 \subset \R^3$,
and \cite[Corollary 13]{golsch2} shows that $\psi \in L^2(\R^3)$.  Since this outcome is
also forbidden by the spectral assumption in Lemma~\ref{lem:goal}, the only
remaining possibility is that $I + M_yVR_0^+(\lambda^2)M_y^{-1}$ is
invertible in $\B(\K)$.
\end{proof}

\begin{proof}[Proof of Lemma~\ref{lem:continuity}]
Once again it suffices to consider $V$ bounded with compact support, as
the uniform inequality 
$\|M_yR_0^+VM_y^{-1}\|_{\W_{L^\infty}} \leq (2\pi)^{-1} \|V\|_\K$
permits approximation of $V$.

Given two points $y_1, y_2 \in \R^3$, we must bound the size of
$\|(M_{y_2}-M_{y_1}) R_0^+VM_{y_1}^{-1}\| + 
\|M_{y_2}R_0^+V(M_{y_2}^{-1} - M_{y_1}^{-1})\|$ in $\W_{L^\infty}$.
The first term is equal to
\begin{equation*}
\sup_x \int_{\R^3} \frac{\big||x-y_2| - |x-y_1|\big|\, |V(z)|}{4\pi |x-z|\,|z-y_1|}\,dz
\les |y_2 - y_1|
\end{equation*}
where the implicit constant depends on the size and support of $V$. The second term is
equal to
\begin{equation*}
\sup_x\int_{\R^3}
\frac{|x-y_2|\,|V(z)|}{4\pi |x-z|} \Big(\frac{1}{|z-y_2|} - \frac{1}{|z-y_1|}\Big)
\,dz.
\end{equation*}

Suppose $V$ is supported in the ball $\{|z| \leq M\}$.  In one case, assume
$|y_2| > 2M$ and $|y_2 - y_1| < \frac12 M$.  Then one can make the estimate
\begin{align*}
\sup_x\int_{\R^3}
\frac{|x-y_2|\,|V(z)|}{4\pi |x-z|} \Big(\frac{1}{|z-y_2|} - \frac{1}{|z-y_1|}\Big)
\,dz 
&\les \frac{|y_2 - y_1|}{|y_2|^2}\sup_x \int_{|z| < M} \frac{|x-y_2|}{|x-z|}\,dz
\\
&\les \frac{|y_2 - y_1|}{|y_2|}.
\end{align*}
In the other case, assume $|y_1|$ and $|y_2|$ are both less than $3M$
(all instances of $y_1$ converging to $y_2$ fall into at least one of these cases).
Then use H\"older's inequality somewhat crudely to obtain the bound
\begin{align*}
\sup_x\int_{\R^3}
&\frac{|x-y_2|\,|V(z)|}{4\pi |x-z|} \Big(\frac{1}{|z-y_2|} - \frac{1}{|z-y_1|}\Big)
\,dz \\
&\les \sup_x |x-y_2| \Big\| \frac{1}{|x-\,\cdot\,|}\Big\|_{L^2(B(0,M))}
\Big\| \frac{1}{|\,\cdot\,-y_2|} - \frac{1}{|\,\cdot\,-y_1|} \Big\|_{L^2(B(0,M))}
\\
&\les \sup_x \frac{|x-y_2|}{\max(|x|,M)} |y_2 - y_1|^{1/2} \\
&\les |y_2 - y_1|^{1/2}.
\end{align*}

The limit as $|y| \to \infty$ can be handled more directly.  
The relevant bound for $R_0^+V - M_yR_0^+VR_y^{-1}$ is
\begin{equation*}
\sup_x \int_{\R^3} \frac{|V(z)|}{4\pi|x-z|}
\Big(\frac{|z-y|}{|z-y|} - \frac{|x-y|}{|z-y|}\Big)\,dz 
\leq \int_{\R^3} \frac{|V(z)|}{4\pi|z-y|}\,dz \to 0
\end{equation*}
for all $V \in \K_0$.
\end{proof}

\section{Perturbed resolvent kernel and eigenfunction bounds} \label{sec:lightcone}
While Theorem~\ref{thm:main} is nominally a statement about the wave equation,
we have observed that the main inequality~\eqref{eq:goal} in its proof is a statement
about the resolvent $R_V^+(\lambda^2)(x,y)$.  Applying Fourier inversion to~\eqref{eq:goal}
and undoing the change of variable $\lambda \to \lambda^2$
leads to a pointwise bound on resolvent kernels.
\begin{corollary} \label{cor:resolventkernel}
 Assume $V \in \K_0$, and $H = -\Delta + V$ has no eigenvalue or resonance at
zero, and no positive eigenvalues.  Then the resolvent of $H$ along the real axis
is an integral operator whose kernel $R_V^\pm(\lambda)( x, y)$ satisfies
\begin{equation}\lb{eq:pointwise}
|R_V^\pm(\lambda) (x, y)| \les \frac{1}{|x-y|}
\end{equation}
uniformly in $\lambda \geq 0$.
\end{corollary}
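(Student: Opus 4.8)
The plan is to obtain \eqref{eq:pointwise} directly from the key inequality~\eqref{eq:goal}, which is the main step in the proof of Theorem~\ref{thm:main}, by Fourier inversion. Fix $x \ne y$ in $\R^3$ and regard $t \mapsto \mathcal{F}(R_V^+({\scriptstyle(\,\cdot\,)^2}))(t,x,y)$ as a complex Borel measure on the line; inequality~\eqref{eq:goal} says precisely that its total variation is at most $C|x-y|^{-1}$, with $C$ independent of $x$ and $y$. As the free-resolvent computation inside the proof of Theorem~\ref{thm:main} shows, this measure carries a point mass at $t=|x-y|$, so it is genuinely a measure and not an $L^1$ function — a harmless feature, but the reason one phrases~\eqref{eq:goal} through total variation.

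Next I would invert the Fourier transform. Because the measure above has finite total variation, its inverse Fourier transform is a bounded continuous function of the dual variable, and Fourier inversion identifies it with the symbol $\lambda \mapsto R_V^+(\lambda^2)(x,y)$ itself; this pointwise identification (as opposed to merely almost-everywhere) is legitimate because the limiting absorption principle already provides continuity of $\lambda \mapsto R_V^\pm(\lambda)(x,y)$, so the two continuous representatives must coincide everywhere. Pulling absolute values inside the inversion integral and applying~\eqref{eq:goal} then yields
\begin{equation*}
\sup_{\lambda \in \R}\,\bigl|R_V^+(\lambda^2)(x,y)\bigr| \;\le\; \frac{1}{2\pi}\int_{-\infty}^\infty \bigl|\mathcal{F}(R_V^+({\scriptstyle(\,\cdot\,)^2}))(t,x,y)\bigr|\,dt \;\les\; \frac{1}{|x-y|}.
\end{equation*}

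Finally I would undo the change of variables $\lambda \mapsto \lambda^2$ and account for both boundary values. Writing a nonnegative energy as $\lambda^2$ with $\lambda \ge 0$ converts the display into $|R_V^+(\lambda)(x,y)| \les |x-y|^{-1}$ for all $\lambda \ge 0$, the endpoint $\lambda=0$ included by the continuity noted above. For the opposite boundary value I would either evaluate the inversion integral at the negative square root — which, under the convention $R_V^+((-\lambda)^2)=R_V^-(\lambda^2)$ recorded after Theorem~\ref{thm:main}, returns $R_V^-(\lambda)(x,y)$ — or use that, $V$ being real-valued and $H$ self-adjoint, $R_V^-(\lambda)(x,y)=\overline{R_V^+(\lambda)(y,x)}$, so the symmetric bound just obtained transfers. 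Either route gives~\eqref{eq:pointwise}.

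I do not expect a genuine obstacle here, since essentially all the content is already in Theorem~\ref{thm:main}; the only two points deserving a careful word are the bookkeeping between the $R_V^+$ and $R_V^-$ limits and the fact that~\eqref{eq:goal} controls the total variation of a measure (because of the embedded delta function), which is why one inverts against a measure and leans on the limiting absorption principle to make the inverse transform an honest pointwise identity rather than an almost-everywhere one.
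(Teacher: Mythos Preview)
Your proposal is correct and matches the paper's own argument: the paper simply states that the corollary follows by applying Fourier inversion to~\eqref{eq:goal} and undoing the change of variable $\lambda \to \lambda^2$, which is exactly what you carry out in detail. Your added care about the measure-valued nature of~\eqref{eq:goal} and the handling of $R_V^-$ via the convention $R_V^+((-\lambda)^2)=R_V^-(\lambda^2)$ (or conjugation) is sound and fills in the bookkeeping the paper leaves implicit.
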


Recall that $R_V^+(\lambda^2)$ has a meromorphic extension in the upper half-plane, with finitely
many poles at $i\mu_j$, where $-\mu_j^2$, $j = 1,2,\ldots,J$ are the eigenvalues of
$H= -\Delta + V$.  Furthermore it is uniformly bounded as an operator on $L^2(\R^3)$
 if the imaginary part of $\lambda$ is large.  This suggests that the bulk of its Fourier
transform should be supported on the right half-line.  In fact an even stronger
statement is true, which is
equivalent to finite propagation speed of the associated wave equation.

\begin{proposition} \label{prop:cone}
Suppose $V \in \K_0$ and $H = -\Delta + V$ has no resonance at zero and
has eigenvalues only at $-\mu_j^2 < 0$, $j=1, 2, \ldots, J$.  Let $P_j$ be the
orthogonal projection onto the eigenspace of $H + \mu_j^2$.  Then for all
$\mu \in [0,\infty) \setminus \{\mu_j\}$ and $t < |x-y|$,
\begin{equation} \label{eq:cone}
\mathcal F R_V((\,\cdot+i\mu)^2)(t, x, y) = -\pi \sum_{\mu_j > \mu} \frac{e^{(\mu_j - \mu)t}}{\mu_j}P_j
\end{equation}
\end{proposition}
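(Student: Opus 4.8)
Fix $x\neq y$ and set $g_\mu(t):=\mathcal F R_V((\,\cdot+i\mu)^2)(t,x,y)$, which by Theorem~\ref{thm:main} (and its analogue for $\im\lambda>0$) is a finite measure in $t$. The plan is to verify \eqref{eq:cone} separately on $\{t\le 0\}$ and on $\{0<t<|x-y|\}$. Two one-dimensional bookkeeping computations recur: first, $\mathcal F R_0((\,\cdot+i\mu)^2)(t,x,y)=\frac{e^{-\mu|x-y|}}{2|x-y|}\,\delta(t-|x-y|)$, which is supported on the light cone and so contributes nothing for $t<|x-y|$; and second, writing $\frac{1}{(\lambda+i\mu)^2+\mu_j^2}=\frac{1}{2i\mu_j}\big(\frac{1}{\lambda+i(\mu-\mu_j)}-\frac{1}{\lambda+i(\mu+\mu_j)}\big)$ and computing residues, $\mathcal F\big(\tfrac{1}{(\,\cdot+i\mu)^2+\mu_j^2}\big)(t)$ equals $\frac{\pi}{\mu_j}e^{(\mu_j-\mu)t}$ for $t<0$ when $\mu_j>\mu$ and vanishes for $t<0$ when $\mu_j<\mu$ (the pole $i(\mu_j-\mu)$ sits in the upper half plane exactly when $\mu_j>\mu$).

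The range $t\le0$ is governed by analyticity. The map $w\mapsto R_V(w^2)$ is holomorphic on $\{\im w>0\}$ except for simple poles at $w=i\mu_j$ with $\operatorname{Res}_{w=i\mu_j}R_V(w^2)=-P_j/(2i\mu_j)$, since $R_V(z)=-P_j/(z+\mu_j^2)+O(1)$ near $z=-\mu_j^2$. Hence $\Phi(w):=R_V(w^2)+\sum_j\frac{P_j}{w^2+\mu_j^2}=R_V(w^2)P_c$ is holomorphic on the open half plane, and the absence of a zero-energy resonance lets it extend continuously to $\{\im w\ge0\}$. I claim the kernel $\Phi(w)(x,y)$ lies in $H^\infty$ of the upper half plane: on the real axis this is Corollary~\ref{cor:resolventkernel} together with $|P_j(x,y)|<\infty$, while in the interior one re-runs the resolvent-identity/Wiener-algebra analysis of Section~\ref{sec:lightcone}, carrying the damping factor $e^{-\im(w)|x-z|}$ along each free resolvent kernel — each Born term then acquires a factor $e^{-\im(w)|x-y|}$ by the triangle inequality, so that $|R_V(w^2)(x,y)|\les e^{-\im(w)|x-y|}/|x-y|$ once $\im w$ is large, and a Phragmén--Lindelöf argument closes the bounded strip that remains. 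Since $\Phi(w)(x,y)$ is bounded and holomorphic for $\im w>0$, its Fourier transform along any horizontal line $\im w=\mu\ge0$ is supported in $\{t\ge0\}$; so for $t<0$, $g_\mu(t)=-\sum_j P_j(x,y)\,\mathcal F\big(\tfrac{1}{(\,\cdot+i\mu)^2+\mu_j^2}\big)(t)=-\pi\sum_{\mu_j>\mu}\frac{e^{(\mu_j-\mu)t}}{\mu_j}P_j(x,y)$, which is \eqref{eq:cone} on $t<0$.

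For $0<t<|x-y|$ the extra input is finite propagation speed. Approximating $V\in\K_0$ by smooth compactly supported potentials and passing to the limit, the usual energy/domain-of-dependence estimate for $u_{tt}+Hu=0$ shows the kernel of $\frac{\sin(t\sqrt H)}{\sqrt H}$ vanishes for $|x-y|>|t|$; combined with $\frac{\sin(t\sqrt H)}{\sqrt H}=\frac{\sin(t\sqrt H)}{\sqrt H}P_c+\sum_j\frac{\sinh(\mu_j t)}{\mu_j}P_j$, this gives $\frac{\sin(t\sqrt H)}{\sqrt H}P_c(x,y)=-\sum_j\frac{\sinh(\mu_j t)}{\mu_j}P_j(x,y)$ for $|t|<|x-y|$. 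On the other hand the Stone-formula calculation behind \eqref{eq:stone} reads $g_0(t)-g_0(-t)=2\pi\,\frac{\sin(t\sqrt H)}{\sqrt H}P_c(x,y)$. When $0<t<|x-y|$ the point $-t$ is already covered by the previous paragraph, so inserting $g_0(-t)=-\pi\sum_j\frac{e^{-\mu_j t}}{\mu_j}P_j(x,y)$ and using $2\sinh\theta+e^{-\theta}=e^\theta$ gives $g_0(t)=-\pi\sum_j\frac{e^{\mu_j t}}{\mu_j}P_j(x,y)$, i.e. \eqref{eq:cone} at $\mu=0$. To pass to general $\mu$, note $\Phi(w)=R_V(w^2)P_c$ has no poles in the open half plane, so pairing against a test function and shifting the contour from $\im w=0$ to $\im w=\mu$ yields $\mathcal F(\Phi(\,\cdot+i\mu)(x,y))(t)=e^{-\mu t}\,\mathcal F(\Phi(\,\cdot)(x,y))(t)$; expressing $g_\mu$ and $g_0$ through $\mathcal F\Phi$ and the rational pieces, substituting the value of $g_0$ on $(0,|x-y|)$ together with the $t>0$ residue formulas for $\mathcal F\big(\tfrac{1}{(\,\cdot+i\mu)^2+\mu_j^2}\big)$, and cancelling all $e^{-(\mu_j+\mu)t}$ contributions (using $e^{-(\mu-\mu_j)t}=e^{(\mu_j-\mu)t}$) leaves precisely $g_\mu(t)=-\pi\sum_{\mu_j>\mu}\frac{e^{(\mu_j-\mu)t}}{\mu_j}P_j(x,y)$. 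With the $t\le0$ case this is \eqref{eq:cone} for all $t<|x-y|$.

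I expect the main obstacle to be the $H^\infty$ bound for $R_V(w^2)P_c(x,y)$ on the closed upper half plane — that is, upgrading the estimates of Section~\ref{sec:lightcone} from the real axis to complex $\lambda$ — and, secondarily, establishing finite propagation speed for $-\Delta+V$ with only $V\in\K_0$, which forces the approximation step. The remaining work, tracking which one-dimensional poles sit above or below the line $\im w=\mu$ as $\mu$ crosses the thresholds $\mu_j$ so that the cutoff $\mathbf 1_{\{\mu_j>\mu\}}$ appears, is routine.
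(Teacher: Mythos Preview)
Your argument is correct in outline, and the residue bookkeeping checks out, but you take a genuinely different route from the paper.  The paper does \emph{not} separate $t<0$ from $0<t<|x-y|$, nor does it invoke finite propagation speed or an $H^\infty$ bound as external inputs.  Instead it observes that for $\mu_0$ sufficiently large the Neumann series for $(I+R_0((\,\cdot+i\mu_0)^2)V)^{-1}$ converges in the Wiener algebra $\W_{L^\infty}$; since $\mathcal F\big(R_0((\,\cdot+i\mu_0)^2)V\big)$ is a measure supported on the light cone $\{t=|x-z|\}$, every iterated convolution is supported in $\{t\ge|x-y|\}$ by the triangle inequality, and hence so is $\mathcal F R_V((\,\cdot+i\mu_0)^2)$.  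The formula~\eqref{eq:cone} then drops out by shifting the contour from $\R+i\mu_0$ down to $\R+i\mu$ and collecting residues at the poles $i\mu_j$ with $\mu<\mu_j<\mu_0$ (the shift is justified by high-energy resolvent decay and a Gaussian regularisation).

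The trade-off: your method isolates the analytic-continuation content ($t<0$) from the hyperbolic content ($0<t<|x-y|$), at the cost of two nontrivial side-arguments --- the $H^\infty$ bound for $R_V(w^2)P_c(x,y)$ (which does go through via the high-energy estimate plus continuity on a compact set, so Phragm\'en--Lindel\"of is not strictly needed) and an independent proof of finite propagation speed for $V\in\K_0$ by approximation.  The paper's method obtains both of these as \emph{consequences} (see Remark~\ref{outsidecone}) rather than hypotheses, by exploiting the light-cone support of the free resolvent's Fourier transform together with convergence of the Born series at large height.  Your approach is longer but has the mild advantage of making the role of finite propagation speed explicit; the paper's is shorter and self-contained.
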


\begin{observation} \lb{outsidecone}
In conjunction with~\eqref{eq:stone}, this confirms that for all $|t|< |x-y|$, 
the propagator $\frac{\sin(t\sqrt{H})}{\sqrt{H}}P_c$ agrees with
$-\sum_j \frac{\sinh(\mu_j t)}{\mu_j} P_j$.
\end{observation}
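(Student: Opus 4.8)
The plan is to extract~\eqref{eq:cone} from the pole structure of $\lambda\mapsto R_V((\lambda+i\mu)^2)(x,y)$ in the upper half-plane by a contour deformation, collecting the residues at the eigenvalue-generated poles. Fix $x\neq y$. For $\psi\in C^\infty_c((-\infty,|x-y|))$, with $\widehat\psi(\zeta)=\int\psi(t)e^{-i\zeta t}\dd t$ its entire Fourier transform, one has $\langle\mathcal F R_V((\,\cdot+i\mu)^2)(\,\cdot,x,y),\psi\rangle=\int_\R R_V((\lambda+i\mu)^2)(x,y)\,\widehat\psi(\lambda)\dd\lambda$, an absolutely convergent integral since $R_V((\lambda+i\mu)^2)(x,y)$ is bounded in $\lambda$ (Corollary~\ref{cor:resolventkernel} when $\mu=0$; the explicit free kernel together with standard high-energy bounds when $\mu>0$). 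I would first record the structural facts: (i) near $z=-\mu_j^2$ one has $R_V(z)=-(z+\mu_j^2)^{-1}P_j+(\text{holomorphic})$, a consequence of self-adjointness with $HP_j=-\mu_j^2P_j$; (ii) writing $z=(\lambda+i\mu)^2$ and factoring $z+\mu_j^2=(\lambda-i(\mu_j-\mu))(\lambda+i(\mu_j+\mu))$, the map $\lambda\mapsto R_V((\lambda+i\mu)^2)$ is meromorphic on $\{\im\lambda>-\mu\}$ with simple poles exactly at $\lambda=i(\mu_j-\mu)$ and residue $-(2i\mu_j)^{-1}P_j$ there (for $\mu=0$: holomorphic on $\{\im\lambda>0\}$ with boundary values on $\R$, supplied by the limiting absorption principle using the absence of positive and zero eigenvalues and of a zero-energy resonance); and (iii) the Paley--Wiener bounds $|\widehat\psi(\xi+i\eta)|\les_N e^{\eta a}(1+|\xi|)^{-N}$ for $\eta\ge0$, with $a<|x-y|$ bounding $\supp\psi$ from above, together with $\widehat\psi(i\tau)=\int\psi(t)e^{\tau t}\dd t$.

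The second input is a finite-speed statement at large damping. The principal-branch identity $\sqrt{-(\lambda+i\mu')^2}=\mu'-i\lambda$ ($\mu'>0$) gives $R_0((\lambda+i\mu')^2)(x,z)=(4\pi|x-z|)^{-1}e^{i\lambda|x-z|}e^{-\mu'|x-z|}$, whose $\lambda$-Fourier transform is the measure $(2|x-z|)^{-1}e^{-\mu'|x-z|}\delta_0(t-|x-z|)$, carried by the forward light cone; moreover $\|R_0((\,\cdot+i\mu')^2)V\|_{\W_{L^\infty}}=\sup_x\int_{\R^3}(2|x-z|)^{-1}e^{-\mu'|x-z|}|V(z)|\dd z\to0$ as $\mu'\to\infty$ for $V\in\K_0$, by approximation of $V$ by bounded compactly supported potentials as in Section~2. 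Hence for $\mu'$ past some threshold the Neumann series $R_V((\,\cdot+i\mu')^2)=\sum_{n\ge0}(-1)^n(R_0((\,\cdot+i\mu')^2)V)^nR_0((\,\cdot+i\mu')^2)$ converges in $\W_{L^\infty}$; since $V$ acts by multiplication while the $\W$-product is convolution in $t$, the triangle inequality makes every term, hence $\mathcal F R_V((\,\cdot+i\mu')^2)(\,\cdot,x,y)$, supported in $\{t\ge|x-y|\}$. In particular it annihilates every $\psi\in C^\infty_c((-\infty,|x-y|))$ and every $\psi(t)e^{ct}$, $c\ge0$.

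To conclude, fix $\mu\in[0,\infty)\setminus\{\mu_j\}$ and choose $\mu'$ larger than $\mu$, than all $\mu_j$, and than the threshold of the previous paragraph. Deform the contour of $\int_\R R_V((\lambda+i\mu)^2)(x,y)\widehat\psi(\lambda)\dd\lambda$ from $\im\lambda=0$ up to $\im\lambda=\mu'-\mu$: the vertical sides at $\re\lambda=\pm N$ vanish as $N\to\infty$ (rapid decay of $\widehat\psi$ in $\re\lambda$, uniform on the strip, against boundedness of the resolvent kernel), the poles swept are exactly the $\lambda=i(\mu_j-\mu)$ with $\mu_j>\mu$, and the displaced integral along $\im\lambda=\mu'-\mu$ equals, after the substitution $\lambda=\xi+i(\mu'-\mu)$, $\langle\mathcal F R_V((\,\cdot+i\mu')^2)(\,\cdot,x,y),\psi e^{(\mu'-\mu)\,\cdot}\rangle=0$ by the high-damping step. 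Collecting residues and using $\widehat\psi(i(\mu_j-\mu))=\int\psi(t)e^{(\mu_j-\mu)t}\dd t$ gives
\[
\int_\R R_V((\lambda+i\mu)^2)(x,y)\,\widehat\psi(\lambda)\dd\lambda=-\pi\sum_{\mu_j>\mu}\frac{P_j(x,y)}{\mu_j}\int_\R\psi(t)\,e^{(\mu_j-\mu)t}\dd t,
\]
that is, $\mathcal F R_V((\,\cdot+i\mu)^2)(t,x,y)=-\pi\sum_{\mu_j>\mu}\mu_j^{-1}e^{(\mu_j-\mu)t}P_j$ on $\{t<|x-y|\}$, which is~\eqref{eq:cone}; the normalization matches Remark~\ref{outsidecone} after letting $\mu\to0^+$ and symmetrizing $t\mapsto-t$ in~\eqref{eq:stone}. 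I expect the main obstacle to be the high-damping step: one must make rigorous the propagation of the light-cone support through the $\W_{L^\infty}$ Neumann series---tracking $t$-supports additively under the $\W$-product, and verifying that $\W_{L^\infty}$ convergence is strong enough (total-variation convergence in $t$ for almost every $(x,y)$, hence everywhere by continuity of the resolvent kernels off the diagonal) to transfer the support property to the limit. The only other delicate point is the $\mu=0$ endpoint, where the existence and analyticity of the resolvent boundary values rest on the limiting absorption principle under the stated spectral hypotheses.
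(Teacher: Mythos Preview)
Your proposal is correct and is essentially the paper's own proof of Proposition~\ref{prop:cone}, from which Remark~\ref{outsidecone} follows by the short computation with~\eqref{eq:stone} that you indicate at the end. The paper does exactly the same two moves: it first shows that for sufficiently large damping $\mu'$ the Neumann series for $R_V((\,\cdot+i\mu')^2)$ converges in $\W_{L^\infty}$ and, since $\mc F(R_0((\,\cdot+i\mu')^2)V)$ is carried by the light cone and the $\W$-product is convolution in $t$, the limit has $t$-Fourier transform supported in $\{t\ge|x-y|\}$; it then shifts the contour between the heights $\mu$ and $\mu'$, collecting the residues $-(2i\mu_j)^{-1}P_j$ at $\lambda=i(\mu_j-\mu)$. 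The only difference is in how the contour shift is justified: you pair against $\psi\in C^\infty_c((-\infty,|x-y|))$ and use Paley--Wiener decay of $\widehat\psi$ to kill the vertical sides, whereas the paper multiplies by $e^{-\epsilon(\lambda+i\mu)^2}$ and sends $\epsilon\to0$, after first establishing a high-energy kernel bound $|R_V((\lambda+i\mu)^2)(x,y)|\les|x-y|^{-1}$ uniform in $\mu$ (via~\eqref{eq:highenergy}) to control the tails. Your route is arguably cleaner, but note that your phrase ``boundedness of the resolvent kernel'' on the strip still needs exactly that uniform-in-$\mu$ high-energy input, which the paper supplies explicitly.
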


\begin{proof}
The formula for the free resolvent gives 
\begin{equation*}
R_0((\lambda + i\mu)^2)(x,y) = e^{(i\lambda-\mu)|x-y|}/(4\pi|x-y|).
\end{equation*}
Examination of its Fourier transform then shows
\begin{equation}
\lim_{\mu \to \infty} \| R_0((\,\cdot\,+i\mu)^2)V\|_{\W_{L^\infty}} = 
\lim_{\mu \to \infty} \sup_{x\in\R^3} \int_{\R^3} \frac{e^{-\mu|x-y|}V(y)}{4\pi|x-y|}\,dy = 0.
\end{equation}
The last limit converges at a rate $1/\mu^2$ if $V$ is bounded and compactly supported,
and is preserved under taking limits to $V \in \K_0$.  Therefore, for sufficiently large $\mu_0<\infty$,
one can compute $(I + R_0((\,\cdot\,+i\mu_0)^2)V)^{-1}$ in $\W_{L^\infty}$ via
a convergent power series.  

The Fourier transform of $R_0((\,\cdot\, + i\mu_0)^2)V$ is a measure supported on the ``forward light cone"
$\{t = |x-y|\}$.  The Fourier transform of each higher power of this object is an iterated
convolution of the measure, which is supported in $\{t \geq |x-y|\}$ by the triangle inequality.
Hence its Wiener algebra inverse, $[\mathcal F((I + R_0(\,\cdot\,+i\mu_0)^2)V)]^{-1}$, is supported here as well,
and so is the Fourier transform of 
$R_V((\,\cdot\,+i\mu_0)^2) = ((I + R_0(\,\cdot\,+i\mu_0)^2)V)^{-1}R_0((\,\cdot\,+i\mu_0)^2)$.


Write out the Fourier transform of $R_V((\,\cdot\,+i\mu_0)^2)(x,y)$ as the contour integral
\begin{equation}\label{eq:contour}
\mathcal F R_V((\,\cdot\,+i\mu_0)^2)(t,x,y) = \int_{\R+i\mu_0}e^{-i t z}e^{-\mu_0 t} R_V(z^2)(x,y)\,dz
\end{equation}
and similarly,
\begin{equation*}
\mathcal F R_V((\,\cdot\,+i\mu)^2)(t,x,y) = \int_{\R+i\mu}e^{-i t z}e^{-\mu t} R_V(z^2)(x,y)\,dz.
\end{equation*}

According to the Stone formula for spectral measure, the residue of $R_V(z)$ at each pole $z =-\mu_j^2$
is the projection $-P_j$.  Then the residue of $R_V(z^2)$ at each pole $z = i\mu_j$ is precisely
$(-2 i\mu_j)^{-1} P_j$.
If we can shift the contour of integration from the horizontal line $\R + i\mu_0$ to the
line $\R + i\mu$, the result would be
\begin{equation*}
\mathcal FR_V((\,\cdot\,+i\mu)^2)(t,x,y) 
= e^{-\mu t}\Big[e^{\mu_0 t} \mathcal F R_V((\,\cdot\,+i\mu_0)^2)(t,x,y) 
-\pi  \sum_{\mu <\mu_j < \mu_0} \frac{e^{\mu_j t}}{\mu_j }P_j\Big]
\end{equation*}

To justify shifting the contour, we first claim that $R_V((\lambda+i\mu)^2)$ is bounded in
a useful way for large $\lambda$.  Note that if $V$ is bounded with compact support,
the statement and proof of~\eqref{eq:highenergy}
in Lemma~\ref{lem:conditions} remain valid for estimating the operator norm of
$M_y(R_0^+(\lambda+i\mu)^2V)^4M_y^{-1}$,
with constants that are independent of $\mu$.  It follows by an approximation argument that for
$V \in \K_0$, the limit 
\begin{equation*}
\lim_{\lambda \to\pm \infty} \|M_y(R_0^+(\lambda+i\mu)^2V)^4M_y^{-1}\|_{\mathcal B(L^\infty)} = 0
\end{equation*}
is uniform in $\mu$.

As a consequence, for $\lambda$ sufficiently large the perturbation series of $R_V((\lambda + i\mu)^2)$
converges absolutely and has a pointwise bound $|R_V((\lambda+i\mu)^2)(x,y)| \leq C|x-y|^{-1}$.
Now the contour in~\eqref{eq:contour} can be moved by multiplying both sides of the equation by 
the analytic function $e^{-\epsilon(\lambda+i\mu)^2}$ then taking the limit as $\epsilon$ goes to zero.
\end{proof}

It is well known that the null space of $H+ \mu_j^2$ consists entirely of functions
with exponential decay at infinity.  The following quantitative bound will be
extremely useful for computing pointwise behavior of spectral multipliers in the next section.

\begin{lemma} \label{lem:projections}
For each eigenvalue $-\mu_j^2<0$ of the operator $H = -\Delta + V$, 
the projection $P_j$ onto its eigenspace satisfies $|P_j(x,y)| \leq C\mu_j e^{-\mu_j|x-y|}/|x-y|$.

Furthermore, since each eigenfunction of $H$ is a bounded function, it is also true that
$|P_j(x,y)| \leq C \mu_j^{2-\alpha} e^{-\mu_j|x-y|}/|x-y|^{-\alpha}$ for all $0 \leq \alpha \leq 1$.
\end{lemma}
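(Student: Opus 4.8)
The plan is to read both estimates off from Theorem~\ref{thm:main} together with the finite-speed-of-propagation identity recorded in Remark~\ref{outsidecone}. Write $d_j=\dim\ker(H+\mu_j^2)$ and fix an orthonormal basis $\{f_{j,k}\}_{k=1}^{d_j}$ of this eigenspace, so that $P_j(x,y)=\sum_k f_{j,k}(x)\overline{f_{j,k}(y)}$. Because each eigenfunction is bounded, $|P_j(x,y)|\le B_j:=d_j\max_k\|f_{j,k}\|_{L^\infty}^2$ for all $x,y$, and this already yields estimate~(1) on $\{|x-y|\le 1\}$ once the constant is taken larger than $B_j e^{\mu_j}/\mu_j$, since there $C\mu_j e^{-\mu_j|x-y|}/|x-y|\ge C\mu_j e^{-\mu_j}\ge B_j$. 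Hence it suffices to prove~(1) for $|x-y|\ge 1$.

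By Remark~\ref{outsidecone}, for every $x\ne y$ and every $|t|<|x-y|$,
\[
\frac{\sin(t\sqrt H)}{\sqrt H}P_c(x,y)=-\sum_{j=1}^{J}\frac{\sinh(\mu_j t)}{\mu_j}\,P_j(x,y).
\]
Integrating the absolute value over $|t|<|x-y|$ and invoking Theorem~\ref{thm:main},
\[
\int_{-|x-y|}^{|x-y|}\Bigl|\sum_{j=1}^{J}\frac{\sinh(\mu_j t)}{\mu_j}\,P_j(x,y)\Bigr|\,dt\ \le\ \int_{-\infty}^{\infty}\Bigl|\frac{\sin(t\sqrt H)}{\sqrt H}P_c(x,y)\Bigr|\,dt\ \les\ \frac{1}{|x-y|}.
\]
It remains to invert this relation mode by mode: to bound the coefficient of each $\sinh(\mu_j\,\cdot)$ by the $L^1([-T,T])$ norm of the combination, with $T:=|x-y|$, carrying the correct exponential weight.

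I would do this using the biorthogonal system $\{\phi_j^{T}\}_{j=1}^{J}\subset L^\infty([-T,T])$ determined by $\int_{-T}^{T}\phi_j^{T}(t)\sinh(\mu_k t)\,dt=\delta_{jk}$; concretely $\phi_j^{T}=\sum_k (G^{T})^{-1}_{jk}\sinh(\mu_k\,\cdot)$, where $G^{T}_{jk}=\int_{-T}^{T}\sinh(\mu_j t)\sinh(\mu_k t)\,dt$. A routine computation gives $e^{-(\mu_j+\mu_k)T}G^{T}_{jk}=\tfrac{1}{2(\mu_j+\mu_k)}+O(e^{-\mu_J T})$ uniformly for $T\ge1$; since the limiting matrix $\bigl(\tfrac{1}{2(\mu_j+\mu_k)}\bigr)_{j,k}$ is a positive-definite Cauchy matrix (the $\mu_j$ being distinct) and each $G^{T}$ is positive definite, a short continuity argument gives $\sup_{T\ge1}\|(D_T^{-1}G^{T}D_T^{-1})^{-1}\|<\infty$ with $D_T=\mathrm{diag}(e^{\mu_1 T},\dots,e^{\mu_J T})$. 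Hence $|(G^{T})^{-1}_{jk}|\les e^{-(\mu_j+\mu_k)T}$ and $\|\phi_j^{T}\|_{L^\infty([-T,T])}\les e^{-\mu_j T}$, with constants depending only on $\{\mu_k\}$. Pairing $\phi_j^{T}$ with the combination above at $T=|x-y|\ge1$,
\[
\frac{|P_j(x,y)|}{\mu_j}\ \le\ \|\phi_j^{T}\|_{L^\infty}\int_{-T}^{T}\Bigl|\sum_{k}\frac{\sinh(\mu_k t)}{\mu_k}P_k(x,y)\Bigr|\,dt\ \les\ \frac{e^{-\mu_j|x-y|}}{|x-y|},
\]
which is estimate~(1).

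Finally, estimate~(2) follows by interpolating~(1) against the trivial bound $|P_j(x,y)|\le B_j$: comparing the two according to whether $|x-y|$ is larger or smaller than $\max(1,\mu_j^{-1})$ first gives $|P_j(x,y)|\les\mu_j^{2}e^{-\mu_j|x-y|}$ (the case $\alpha=0$), and then $\min(a,b)\le a^{1-\alpha}b^{\alpha}$ applied with $a=C\mu_j^{2}e^{-\mu_j|x-y|}$ and $b=C\mu_j e^{-\mu_j|x-y|}|x-y|^{-1}$ produces $|P_j(x,y)|\les\mu_j^{2-\alpha}e^{-\mu_j|x-y|}|x-y|^{-\alpha}$ for $0\le\alpha\le1$. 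I expect the only genuine work to be the uniform-in-$T$ estimate on $(G^{T})^{-1}$ in the third paragraph — equivalently, that the functions $e^{-\mu_j T}\sinh(\mu_j t)$ form a uniformly well-conditioned family in $L^2([-T,T])$ as $T\to\infty$; the rest is a direct application of Theorem~\ref{thm:main} and Remark~\ref{outsidecone}.
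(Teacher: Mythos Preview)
Your proof is correct, but the paper's argument for the first estimate is considerably shorter. Rather than building a biorthogonal system and analyzing the Gram matrix $G^T$ asymptotically, the paper uses Proposition~\ref{prop:cone} (with $\mu=0$) together with~\eqref{eq:goal} to obtain
\[
\int_{-\infty}^{|x-y|}\Bigl|\sum_{j=1}^{J}\frac{P_j(x,y)}{\mu_j}\,e^{\mu_jt}\Bigr|\,dt\ \les\ \frac{1}{|x-y|},
\]
and then substitutes $t\mapsto t+|x-y|$ to reduce matters to the single functional
\[
F(a_1,\dots,a_J)=\int_{-\infty}^{0}\Bigl|\sum_{j=1}^{J}a_je^{\mu_jt}\Bigr|\,dt
\]
evaluated at $a_j=\mu_j^{-1}P_j(x,y)e^{\mu_j|x-y|}$. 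Since $F$ is continuous, homogeneous of degree one, and strictly positive on $\{\max_j|a_j|=1\}$, compactness gives $F(a)\ge c\max_j|a_j|$ for some $c>0$ independent of $x,y$. This yields the bound for \emph{all} $x\ne y$ in one stroke, with no need to treat $|x-y|\le 1$ separately or to study the large-$T$ behavior of any matrix. Your dual-basis construction achieves the same inequality and is perfectly rigorous, but it trades a two-line soft argument for an explicit quantitative one; the only payoff would be if you needed the constants in terms of the spacing of the $\mu_j$, which the lemma does not ask for. The interpolation step for the second estimate is essentially the same in both proofs.
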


\begin{proof}
Given a finite number of distinct positive numbers $\mu_j$, $j = 1,\ldots, J$,
define a function $F:\R^J \to \R$  by 
\begin{equation*}
F(a_1, \ldots, a_J) = \int_{-\infty}^0 \Big|\sum_{j=1}^J a_je^{\mu_j t}\Big|\,dt.
\end{equation*}
This is a nonnegative continuous function on $\R^J$, homogeneous of order 1, and it is not zero except
when all $a_j = 0$.  By compactness, the infimum $\min\{F(a_1,\ldots,a_J): \max_j |a_j|= 1\}$ is
a number $c>0$.

The combination of Proposition~\ref{prop:cone} (with $\mu = 0$) and the bound~\eqref{eq:goal} from
Theorem~\ref{thm:main} yields
\begin{equation*}
\int_{-\infty}^{|x-y|} \Big| \sum_{j=1}^J \frac{P_j(x,y)}{\mu_j}e^{\mu_j t}\Big|\,dt \leq \frac{C}{|x-y|}.
\end{equation*}
The property of function $F$ discussed above implies that 
\begin{equation*}
\max_j \frac{|P_j(x,y)|e^{\mu_j|x-y|}}{\mu_j} \leq \frac{C}{c|x-y|}.
\end{equation*}
This proves the first claim.  Boundedness of eigenfunctions implies that there is a constant $\tilde{C}$
so that $|P_j(x,y)| \leq \tilde{C}\mu_j \min(1,\,|x-y|^{-1}e^{-\mu_j|x-y|})$ which is dominated in turn by
$\tilde{C}\mu_j |x-y|^{-\alpha}e^{-\mu_j|x-y|}$.  By assumption, the values $\mu_j$ are a finite set of
positive numbers.  So one may replace $\mu_j$ by $\mu_j^{2-\alpha}$ at the cost of changing the constant once more.

\end{proof}

\section{$L^p$ and pointwise kernel bounds for Mihlin multipliers}
Lemma~\ref{lem:goal} is a rather direct statement about how
the spectral measure of $H = -\Delta + V$ differs from 
that of the Laplacian on $\R^3$.  As a result it provides information
about a wide range of elements in the functional calculus of $H$
beyond the wave propagator $\frac{\sin(t\sqrt{H})}{\sqrt{H}}$.
We show that two of the most common criteria for $L^p$ boundedness
of radial Fourier multipliers $m(|\nabla|)$ are also sufficient
to ensure boundedness of the corresponding operator $m(\sqrt{H})$.

\begin{theorem} \label{thm:multiplier3}
 Assume $V \in \mc K_0 \cap L^{3/2, \infty}$, and $H = -\Delta + V$ has no eigenvalue or resonance at
zero, and no positive eigenvalues.  Let $\phi$ be a $C^\infty$ function supported on
$[\frac12, 2]$ such that $\{\phi(2^{-k}\lambda)\}_{k\in\mathbb Z}$ forms a partition of unity for
$\R^+$.

Suppose $m: (0,\infty) \to \mathbb C$ satisfies
\begin{equation} \label{eq:HsCondition}
\sup_{k\in \Z} \|\phi(\lambda)m(2^k\lambda)\|_{H^s} = M_s < \infty \text{ for some } s> \frac32.
\end{equation}
Then $m(\sqrt{H}) := m(\sqrt{H P_c})$ is a singular integral operator that is bounded on
$L^p(\R^3)$ for all $1 < p < \infty$, with operator norm less than $C_{s,p} M_s$.

If $V \in \mc K_0$ and~\eqref{eq:HsCondition} holds for some $s>2$, then $m(\sqrt{H})$ has an integral kernel 
$K(x,y)$ that is bounded pointwise by
\begin{equation}
|K(x,y)| \les \frac{M_s}{|x-y|^3}.
\end{equation}
\end{theorem}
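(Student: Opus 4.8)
The plan is to handle both assertions through a single mechanism: express $m(\sqrt H)P_c$ by the Stone formula, pass to the Fourier side in the spectral variable, and peel off the free part together with the finitely many contributions coming from the bound states. Running the computation in \eqref{eq:stone} with $m(\lambda)\lambda$ in place of $\sin(t\lambda)$ gives $m(\sqrt H)P_c = c\int_{-\infty}^{\infty} m(|\lambda|)\lambda\,R_V^+(\lambda^2)\dd\lambda$, so the integral kernel of $m(\sqrt H)$ is $K(x,y)=c'\int_{-\infty}^\infty \check m(t)\,\mathcal F\big(R_V^+({\scriptstyle(\,\cdot\,)^2})\big)(t,x,y)\dd t$, where $\check m$ is the one-dimensional inverse Fourier transform of the odd function $\lambda\mapsto m(|\lambda|)\lambda$. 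Decomposing $m=\sum_{k\in\Z}\phi(2^{-k}\,\cdot\,)m$, the elementary input is that the rescaled piece $\nu_k(u):=\phi(|u|)m(2^k|u|)\,u$ is supported in $\{\tfrac12\le|u|\le2\}$ with $\|\nu_k\|_{H^s}\les M_s$; being compactly supported, all of $u^n\nu_k$ stay in $H^s$, so $\widehat\nu_k$ and its derivatives lie in $L^2$ against the weight $\langle\rho\rangle^s$, and a one-dimensional Gagliardo--Nirenberg estimate on dyadic $\rho$-intervals upgrades this to $|\widehat\nu_k(\rho)|\les M_s\langle\rho\rangle^{-s}$. Undoing the scaling, $|\check m_k(t)|\les M_s\,2^{2k}\langle2^kt\rangle^{-s}$, hence $|\check m(t)|\les M_s|t|^{-2}$ for $t\ne0$ as soon as $s>2$ (near $t=0$, $\check m$ is only a distribution, which is why the term below that sees $t=0$ will be transferred back to the $\lambda$-variable before estimation).

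For the pointwise kernel bound I would split $\mathcal F\big(R_V^+({\scriptstyle(\,\cdot\,)^2})\big)(t,x,y)$ along the light cone $\{|t|<|x-y|\}$. By Proposition~\ref{prop:cone} with $\mu=0$ (together with Remark~\ref{outsidecone}) the in-cone part equals $-\pi\sum_j\mu_j^{-1}e^{\mu_jt}P_j(x,y)$, while the part supported in $\{t\ge|x-y|\}$ has total variation $\les|x-y|^{-1}$ in $t$ by Theorem~\ref{thm:main}, i.e.\ by \eqref{eq:goal}. Pairing the latter with $\check m$ and using $\sup_{t\ge|x-y|}|\check m(t)|\les M_s|x-y|^{-2}$ contributes $\les M_s|x-y|^{-3}$. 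For the in-cone eigenvalue part, Fourier transforming back to the $\lambda$-variable turns its contribution into the finite sum $c''\sum_j\mu_j^{-1}e^{\mu_j|x-y|}P_j(x,y)\int \tfrac{m(|\lambda|)\lambda}{\mu_j+i\lambda}e^{i\lambda|x-y|}\dd\lambda$, the oscillatory integral being made rigorous through the same dyadic decomposition; since $\nu_k(u)/(2^{-k}\mu_j-iu)$ is again compactly supported in $H^s$ with norm $\les M_s\,2^k/\max(2^k,\mu_j)$, summing over $k$ (again using $s>2$) bounds the integral by $\les M_s(\mu_j|x-y|^2)^{-1}$. Combined with $\mu_j^{-1}e^{\mu_j|x-y|}|P_j(x,y)|\les|x-y|^{-1}$ from Lemma~\ref{lem:projections}, this piece is also $\les M_s|x-y|^{-3}$, and there are only finitely many $j$.

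For the $L^p$ statement one cannot run Calder\'on--Zygmund theory off the pointwise kernel bound: that bound needs $s>2$, and in any case the kernel of $m(\sqrt H)$ is not known to satisfy the H\"ormander cancellation condition. Instead I would argue perturbatively. Since $\|m\|_\infty\les M_s$, $m(\sqrt H)$ is $L^2$-bounded, and $\bar m$ satisfies the same hypothesis, so by duality and interpolation it suffices to obtain a weak-$(p,p)$ bound for $1<p<2$ for the difference $m(\sqrt H)-m(\sqrt{-\Delta})$, plus an ordinary $L^p$ bound on the finite-rank operator associated to the bound states, whose kernel decays exponentially by Lemma~\ref{lem:projections} and is therefore controlled by a Schur estimate on every $L^p$. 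Using the resolvent identity $R_V^+=R_0^+-(I+R_0^+V)^{-1}R_0^+VR_0^+$, the difference operator has kernel $-c\int m(|\lambda|)\lambda\,(I+R_0^+(\lambda^2)V)^{-1}R_0^+(\lambda^2)VR_0^+(\lambda^2)(x,y)\dd\lambda$, which I would estimate in the mixed norm $\sup_x\|\,\cdot\,(x,\,\cdot\,)\|_{L^{p',\infty}}$ and its transpose, controlling the factor $(I+R_0^+V)^{-1}$ by the $\W_{L^\infty}$ bound of Lemma~\ref{lem:goal} and the remaining $R_0^+VR_0^+$ by the mapping properties of the free resolvent. This is the step that forces $V\in L^{3/2,\infty}$: multiplication by such a potential exactly compensates the shift $3/q\mapsto 3/q-2$ produced by each free resolvent. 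A Schur-type test in Lorentz spaces then yields $m(\sqrt H)-m(\sqrt{-\Delta})\colon L^p\to L^{p,\infty}$, and interpolating with the $L^2$ bound and dualizing covers $1<p<\infty$.

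I expect the genuine obstacle to be precisely this last point. The possible failure of the cancellation condition means the $L^p$ bound cannot be deduced from abstract singular integral theory but must be obtained through explicit $L^p_xL^{p',\infty}_y$ control of the difference kernel; this is what brings in the stronger hypothesis $V\in L^{3/2,\infty}$ and what blocks the endpoint $p=1$, which is why the weak-$(1,1)$ statement is isolated as Theorem~\ref{thm:weakL1} under a further condition. Everything else is routine dyadic bookkeeping, with the only other subtlety — the non-integrability of $\check m$ near $t=0$ — neutralized by the oddness of $m(|\lambda|)\lambda$ and the smoothness in $t$ of the in-cone eigenvalue term.
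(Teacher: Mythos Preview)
Your pointwise kernel argument is essentially the paper's: split at the light cone, use Proposition~\ref{prop:cone} and Lemma~\ref{lem:projections} for the in-cone eigenvalue piece, and pair $\sup_{t\ge|x-y|}|\check m(t)|\les M_s|x-y|^{-2}$ with the total variation bound from~\eqref{eq:goal} for the out-of-cone piece. The paper routes this last step through Lemma~\ref{lem:LpTransfer} with $p=\infty$, $\beta=3$, but the content is identical.

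For the $L^p$ statement your perturbative strategy also matches the paper's, including the role of $V\in L^{3/2,\infty}$, but the execution you propose has a gap. The mixed norm you write, $\sup_x\|K(x,\cdot)\|_{L^{p',\infty}_y}$ (and its transpose), is \emph{not} finite for the difference kernel, and no Schur-type test is available in that form. The column norms depend on $y$ through $\int|V(z)|\,|z-y|^{3/p-4}\,dz$, which lies only in $L^{p',\infty}_y$, not in $L^\infty_y$. The paper's actual route is to bound $\|T_k(\cdot,y)\|_{L^p_x}$ (strong $L^p$, not weak) by means of Lemma~\ref{lem:LpTransfer}, sum over $k$ to obtain
\[
\sum_k\|T_k(\cdot,y)\|_{L^p}\ \les\ \int_{\R^3}\frac{|V(z)|}{|z-y|^{4-3/p}}\,dz,
\]
and then pair this non-uniform bound with $f\in L^{p,1}$ via Young's inequality in Lorentz spaces. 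This yields $L^{p,1}\to L^p$ rather than weak-$(p,p)$, but interpolation with $L^2$ proceeds the same way. The tool you are missing is precisely Lemma~\ref{lem:LpTransfer}, which transfers $L^p_t$ control of $t^{2/p-1}g(t)$ into $L^p_x$ control of $\int_{t\ge|x-z|}g(t)\,\mathcal F R_V^+(t,x,z)\,dt$; it interpolates between Lemma~\ref{lem:conicalbound} ($p=1$) and the $t$-weighted bound of~\cite{becgol} ($p=\infty$). Note also that the light-cone split in the $L^p$ argument happens at $t=|x-z|$ (the intermediate point introduced by the resolvent identity $R_V^+=R_0^+-R_V^+VR_0^+$), not at $t=|x-y|$. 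Finally, there is no separate ``finite-rank bound state operator'' to treat: $m(\sqrt H)$ already projects off the bound states by definition, and their only trace is through the in-cone pieces $T_{k,2}$ inside the difference, handled exactly as in your pointwise argument.
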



The proof of both claims in the theorem will use the following lemmas:

\begin{lemma} \label{lem:conicalbound}
 Assume $V \in \K_0$, and $H = -\Delta + V$ has no eigenvalue or resonance at
zero, and no positive eigenvalues.  Then the Fourier transform of $R_V^+(\lambda^2)$
satisfies the bound
\begin{equation} \label{eq:conicalbound}
\int_{|x-y| \leq t} \big| \mathcal F(R_V^+({\scriptstyle (\,\cdot\,)^2}))(t, x, y)\big|\,dx \les t
\end{equation}
uniformly in $y \in \R^3$, $t > 0$.
\end{lemma}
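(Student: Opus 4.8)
The plan is to reduce to the free resolvent through the first resolvent identity $R_V^+(\lambda^2) = R_0^+(\lambda^2) - R_0^+(\lambda^2)\,V\,R_V^+(\lambda^2)$ and then invoke the bound~\eqref{eq:goal} established while proving Theorem~\ref{thm:main}, namely $\int_{\R}\bigl|\mathcal{F}(R_V^+({\scriptstyle(\,\cdot\,)^2}))(t,w,y)\bigr|\,dt \les |w-y|^{-1}$. Taking the Fourier transform in $\lambda$ and using that $R_0^+(\lambda^2)(x,w) = (4\pi|x-w|)^{-1}e^{i\lambda|x-w|}$ has Fourier transform a fixed multiple of $|x-w|^{-1}\delta_0(t-|x-w|)$ --- an atom on the forward cone weighted by the free kernel --- the identity becomes
\begin{equation*}
\mathcal{F}R_V^+(t,x,y) = \mathcal{F}R_0^+(t,x,y) - c\int_{\R^3}\frac{V(w)}{|x-w|}\,\mathcal{F}R_V^+\bigl(t-|x-w|,\,w,\,y\bigr)\,dw .
\end{equation*}
First I would take absolute values and integrate over the ball $\{|x-y|\le t\}$. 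The $\mathcal{F}R_0^+$ term is disposed of at once: $\mathcal{F}R_0^+(t,x,y) = c|x-y|^{-1}\delta_0(t-|x-y|)$ is carried by the sphere $\{|x-y|=t\}$, the boundary of the ball, so its total mass there is a fixed multiple of $t^{-1}\cdot t^2 = t$. The substance of the argument is the error term.

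For the error term I would use Tonelli to move the $w$-integral outside, and for fixed $w,y$ substitute $\tau=t-|x-w|$: by the coarea formula the inner integral over $\{|x-y|\le t\}$ becomes $\int_{\R}\frac{\sigma_{t-\tau}(w)}{t-\tau}\,d\bigl|\mathcal{F}R_V^+(\cdot,w,y)\bigr|(\tau)$, where $\sigma_r(w)$ denotes the surface area of $\{x:|x-w|=r\}\cap\{|x-y|\le t\}$. The key geometric estimate, obtained from the spherical-cap area formula together with the law of cosines $|x-y|^2=|x-w|^2+|w-y|^2-2|x-w|\,|w-y|\cos\theta$ at the vertex $w$, is that $\sigma_r(w)\le\min\bigl(4\pi r^2,\ \pi r t^2/|w-y|\bigr)$ and that $\sigma_r(w)=0$ unless $\bigl|\,|w-y|-t\,\bigr|\le r\le|w-y|+t$, so that $t-\tau\le t+|w-y|$ on the relevant range of $\tau$. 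Inserting this and using~\eqref{eq:goal} in the form $\int_{\R}d\bigl|\mathcal{F}R_V^+(\cdot,w,y)\bigr|\les|w-y|^{-1}$, the inner integral is $\les\min\bigl(t+|w-y|,\ t^2/|w-y|\bigr)\,|w-y|^{-1}$, and hence the error term is $\les\int_{\R^3}|V(w)|\,\min\bigl(t+|w-y|,\ t^2/|w-y|\bigr)\,|w-y|^{-1}\,dw$.

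To finish I would split this integral at $|w-y|=t$: on $\{|w-y|\le t\}$ the weight is $\les t$, so that part is $\les t\,\|V\|_{\K}$; on $\{|w-y|>t\}$ the weight is $\les t^2/|w-y|$, so that part is $\les t^2\int_{|w-y|>t}|V(w)|\,|w-y|^{-2}\,dw\le t\int_{|w-y|>t}|V(w)|\,|w-y|^{-1}\,dw\le t\,\|V\|_{\K}$; together with the free term this yields~\eqref{eq:conicalbound}, with a constant depending only on $\|V\|_{\K}$ and hence uniform in $y$ and $t$. I expect the cap estimate to be the crux and the only genuinely essential new point: with only the trivial bound $\sigma_r(w)\le4\pi r^2$ one is left with the weight $t-\tau$, which can be as large as $t+|w-y|$ and produces a term proportional to $\int_{\R^3}|V(w)|\,dw$, the full $L^1$ norm of $V$, which $\|V\|_{\K}$ does not control; the decay $t^2/|w-y|$ for $w$ far from $y$ is exactly what rescues the estimate. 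A routine but real subtlety, already flagged by the delta-function caveat attached to~\eqref{eq:goal}, is that $\mathcal{F}R_V^+$ is merely a measure in $t$ (inheriting an atom on $\{t=|x-y|\}$ from $R_0^+$), so the evaluation $\mathcal{F}R_V^+(t-|x-w|,w,y)$ and its integration against $dx$ must be interpreted as precisely the coarea disintegration above.
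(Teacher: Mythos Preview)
Your proof is correct and follows the same strategy as the paper: apply the resolvent identity $R_V^+ = R_0^+ - R_0^+ V R_V^+$, dispatch the free term as the sphere $|x-y|=t$, and control the error by combining a spherical-cap area estimate with the total-variation bound~\eqref{eq:goal}.

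The one noteworthy difference is the cap estimate itself. You derive the $w$-dependent bound $\sigma_r(w)\le \pi r\,t^2/|w-y|$, carry the $|w-y|$ factor through, and then split the $w$-integral at $|w-y|=t$. The paper instead uses the cruder but \emph{uniform} bound $\sigma_r(w)\lesssim \min(r^2,t^2)$, which gives $\sigma_r(w)/r\lesssim t$ for all $r$ and $w$; this lets one pull out $\sup_{s,w}\int_{|x-y|\le t}|\mathcal F R_0^+(t-s,x,w)|\,dx \lesssim t$ directly and leaves only the finite total variation $\int\!\!\int |V(w)|\,d|\mathcal F R_V^+|(s,w,y)\le \|V\|_{\K}$, with no splitting needed. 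So while you are right that the trivial bound $\sigma_r\le 4\pi r^2$ alone is insufficient, the refinement you call ``the only genuinely essential new point'' can be replaced by the simpler uniform estimate $\sigma_r\lesssim t^2$, which shortens the argument.
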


\begin{proof}
 Use resolvent identities to write
\begin{equation*}
R_V^+(\lambda^2) = R_0^+(\lambda^2) - R_0^+(\lambda^2) VR_V^+(\lambda^2).
\end{equation*}
Recalling that $R_0^+(\lambda^2)(x,y) = \frac{e^{i\lambda|x-y|}}{4\pi|x-y|}$,
there is an explicit formula
\begin{equation*}
\mathcal F(R_0^+(\lambda^2))(t, x, y) = \frac{\delta(t-|x-y|)}{4\pi|x-y|}.
\end{equation*}
which satisfies~\eqref{eq:conicalbound}.

It is a consequence of~\eqref{eq:goal}, or \cite[Theorem 1]{becgol}, that
for each fixed $y \in \R^3$, the integral kernel $\mathcal F(VR_V^+(\lambda^2))(t,x,y)$ is a measure
in $\R^{1+3}$ with finite total variation.  It follows that 
\begin{multline*} 
\int_{|x-y| \leq t} \big| \mathcal F (R_V^+(\lambda^2)- R_0^+(\lambda^2))(t, x ,y)\big|\,dx
\\
\begin{aligned}
&
\leq \int_{|x-y| \leq t} \int_{-\infty}^t  \int_{\R^3}
\big|\mathcal F(R_0^+(\lambda^2))(t-s, x,w)\big|\,
\big|\mathcal F(VR_V^+(\lambda^2))(s,w,y)\big|\,dw ds dx 
\\
&\leq \sup_{s<t} \sup_{w\in\R^3} \int_{|x-y| \leq t} \big|\mathcal F (R_0^+(\lambda^2))(t-s, x ,w)\big|\,dx \\
&= \sup_{s<t} \sup_{w\in\R^3} (4\pi(t-s))^{-1} \text{Area}\big(\{|x-w| = t-s\}\cap \{|x-y| \leq t\}\big)\\
&\les \sup_{s<t} \,(t-s)^{-1}\min((t-s)^2, t^2) \,\leq\, t. 
\end{aligned}
\end{multline*}
\end{proof}

\begin{lemma} \label{lem:LpTransfer}
Given a function $g:\R \to \C$, the inequality
\begin{equation} \label{eq:LpTransfer}
\Big\| |x-y|^{\beta} \int_{t \geq |x-y|} g(t) \mathcal F(R_V(\lambda^2))(t,\,\cdot\,, y)\, dt \Big\|_{L^p(\R^3)}
\les \big\| |t|^{\frac{2}{p}-1+\beta} g(t) \big\|_{L^p(\R_+)}
\end{equation}
holds uniformly for $1 \leq p \leq \infty$, $y \in \R^3$, and $\beta \geq 0$.
\end{lemma}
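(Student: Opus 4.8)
\textbf{Proof plan for Lemma~\ref{lem:LpTransfer}.}

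The plan is to reduce the claimed $L^p$ bound to an interpolation between the endpoint cases $p = 1$ and $p = \infty$, using the two geometric estimates already available: the pointwise bound $\int_\R |\mathcal F(R_V^+((\,\cdot\,)^2))(t,x,y)|\,dt \les |x-y|^{-1}$ from~\eqref{eq:goal}, and the conical bound $\int_{|x-y| \le t} |\mathcal F(R_V^+((\,\cdot\,)^2))(t,x,y)|\,dx \les t$ from Lemma~\ref{lem:conicalbound}. Write $\mu_y(dt, dx) = |\mathcal F(R_V(\lambda^2))(t,x,y)|\,dx\,dt$ for the (finite-on-compacts) positive measure controlled by these inequalities, and let $\Phi_y(x) := |x-y|^\beta \int_{t \ge |x-y|} g(t)\,\mathcal F(R_V(\lambda^2))(t,x,y)\,dt$ be the function whose $L^p(\R^3_x)$ norm we must bound by $\| |t|^{2/p - 1 + \beta} g(t)\|_{L^p(\R_+)}$.

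First I would treat $p = \infty$. Here $|\Phi_y(x)| \le |x-y|^\beta \sup_{t \ge |x-y|}\bigl(|t|^{1-\beta} |g(t)|\bigr) \cdot |x-y|^{-1}\int_{t \ge |x-y|} |t|^{\beta - 1}\,\cdot\,$ — more cleanly, bound $|g(t)| \le |t|^{1-\beta}\| |s|^{\beta-1}g(s)\|_{L^\infty}$ on the region $t \ge |x-y|$, so that
\begin{equation*}
|\Phi_y(x)| \le |x-y|^\beta \,\bigl\| |s|^{\beta - 1} g(s) \bigr\|_{L^\infty(\R_+)} \int_{t \ge |x-y|} t^{1-\beta}\,|\mathcal F(R_V(\lambda^2))(t,x,y)|\,dt.
\end{equation*}
Since $t \ge |x-y|$ forces $t^{1-\beta} \le |x-y|^{1-\beta}$ when $\beta \le 1$ — and when $\beta > 1$ one instead keeps $t^{1-\beta}$ and absorbs it differently — the factor $|x-y|^\beta \cdot |x-y|^{1-\beta} = |x-y|$ combines with the $|x-y|^{-1}$ from~\eqref{eq:goal} to give the clean bound $|\Phi_y(x)| \les \| |s|^{\beta-1}g(s)\|_{L^\infty}$, which is the $p=\infty$ case with exponent $2/p - 1 + \beta = \beta - 1$. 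The case $\beta > 1$ needs a slightly more careful splitting of the $t$-integral (dyadically in $t/|x-y|$), but the weight $|t|^{2/p-1+\beta}$ with $p=\infty$ is exactly engineered so each dyadic piece sums.

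Next, $p = 1$. By Tonelli, $\|\Phi_y\|_{L^1(\R^3)} \le \int_{\R^3}\int_{t \ge |x-y|} |x-y|^\beta |g(t)|\,|\mathcal F(R_V(\lambda^2))(t,x,y)|\,dt\,dx$; swapping the order of integration, the inner integral over $\{x : |x-y| \le t\}$ is controlled by Lemma~\ref{lem:conicalbound} after bounding $|x-y|^\beta \le t^\beta$, yielding $\int_0^\infty |g(t)|\, t^\beta \cdot t\,dt = \| |t|^{1+\beta} g(t)\|_{L^1(\R_+)}$, which is the $p=1$ case since $2/1 - 1 + \beta = 1 + \beta$. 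The intermediate $1 < p < \infty$ then follows by analytic interpolation in $g$: the map $g \mapsto \Phi_y$ is linear, and applying the Riesz–Thorin / Stein interpolation theorem between $L^1 \to L^1$ (with weight $t^{1+\beta}$) and $L^\infty \to L^\infty$ (with weight $t^{\beta-1}$) produces $L^p \to L^p$ with the weight $t^{2/p - 1 + \beta}$, uniformly in $y$. The main obstacle is not any single estimate but keeping the two weight bookkeepings — the power of $|x-y|$ versus the power of $t$ on the cone $t \ge |x-y|$ — consistent across the endpoints and ensuring the constant is genuinely independent of $y$; the uniformity is automatic because both input inequalities~\eqref{eq:goal} and~\eqref{eq:conicalbound} are already uniform in $y$.
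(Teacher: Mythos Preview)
Your $p=1$ endpoint and the interpolation scheme match the paper, but the $p=\infty$ endpoint has a genuine gap. The inequality you assert, $t^{1-\beta} \le |x-y|^{1-\beta}$ on $\{t \ge |x-y|\}$ when $\beta \le 1$, goes the wrong way: for $1-\beta \ge 0$ the function $t \mapsto t^{1-\beta}$ is nondecreasing, so in fact $t^{1-\beta} \ge |x-y|^{1-\beta}$ there. Your argument actually works only for $\beta \ge 1$; you have the cases reversed. More seriously, the bound~\eqref{eq:goal} by itself cannot yield the $p=\infty$ case for $\beta < 1$: a dyadic split $t \sim 2^j|x-y|$ gives contributions of order $|x-y|^\beta\cdot (2^j|x-y|)^{1-\beta}\cdot |x-y|^{-1} = 2^{j(1-\beta)}$, which diverge as $j \to \infty$. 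No rearrangement of weights fixes this, because~\eqref{eq:goal} does not rule out the mass of $\mathcal F(R_V^+)(t,x,y)$ sitting at arbitrarily large $t$.

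The paper's proof supplies the missing ingredient: the time-weighted estimate
\[
\sup_{x,y}\int_\R \bigl|\,t\,\mathcal F(R_V^+({\scriptstyle(\,\cdot\,)^2}))(t,x,y)\bigr|\,dt < \infty,
\]
which is Theorem~1 of~\cite{becgol}. With this in hand, the $p=\infty$, $\beta=0$ case is immediate via $|\Phi_y(x)| \le \|t^{-1}g\|_{L^\infty}\int |t\,\mathcal F(R_V^+)(t,x,y)|\,dt$. The paper then interpolates at $\beta=0$ and obtains $\beta > 0$ trivially by inserting $(|x-y|/t)^\beta \le 1$ on the domain of integration. Your $p=1$ argument is correct and agrees with the paper's; to complete the proof you need to invoke the time-weighted bound rather than~\eqref{eq:goal} at the $L^\infty$ endpoint.
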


\begin{proof}
Lemma~\ref{lem:conicalbound} immediately implies the $p=1$, $\beta = 0$ case.   Theorem~1 of~\cite{becgol} states that
\begin{equation*}
\sup_{x,y} \int_\R |t| \Big|\frac{\sin(t\sqrt{H})P_c}{\sqrt{H}}(t, x, y)\,dt\Big| < \infty
\end{equation*}
and it is proved via the bound
\begin{equation*}
\sup_{x,y} \int_\R \big|t \mathcal F(R_V^+({\scriptstyle (\,\cdot\,)^2}))(t, x, y)\,dt\big| < \infty.
\end{equation*}

This is equivalent to the $p=\infty$, $\beta = 0$ case, and the rest of the $\beta = 0$ cases follow by complex interpolation.  The $\beta \geq 0$ cases follow immediately as well, since $\frac{|x-y|}{t} \leq 1$ within the domain of integration.
\end{proof}

Fix a smooth cutoff function $\chi \in C^\infty_c(\R)$ such that $\sum_{k \in \Z} \chi(2^{-k} t) = \chi_{(0, \infty)}(t)$ and $\supp \chi \subset [2/3, 3]$.

\begin{definition}
The Littlewood-Paley square function for the perturbed Hamiltonian is
$$
[S_H f](x) = \bigg(\sum_{k \in \Z} |[\chi(2^{-k} \sqrt H) f](x)|^2\bigg)^{1/2}.
$$
\end{definition}

By a standard argument, the square function is comparable to the original function. Square functions corresponding to different choices of dyadic cutoffs are all comparable in the $L^p$ norms, $1<p<\infty$.

\begin{theorem} Given $H=-\Delta+V$ and assuming that $V \in \mc K_0 \cap L^{3/2, \infty}$, for each $p \in (1, \infty)$
$$
\|S_H f\|_{L^p} \les_p \|f\|_{L^p} \les_p \|S_H f\|_{L^p}.
$$
\end{theorem}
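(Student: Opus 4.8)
The plan is to derive both inequalities from the standard Littlewood--Paley theory for the free Laplacian together with the Mihlin multiplier bounds of Theorem~\ref{thm:multiplier3} and the Khintchine randomization trick. First I would fix a second dyadic cutoff $\tilde\chi \in C^\infty_c$ with $\tilde\chi = 1$ on $\supp\chi$ and supported in a slightly larger interval, so that $\chi(2^{-k}\sqrt H) = \chi(2^{-k}\sqrt H)\,\tilde\chi(2^{-k}\sqrt H)$. Introducing Rademacher functions $\{r_k\}$ on $[0,1]$, Khintchine's inequality converts the $\ell^2$ sum defining $S_H f$ into an average over random signs:
\begin{equation*}
\|S_H f\|_{L^p}^p \sim \int_0^1 \Big\| \sum_{k\in\Z} r_k(\omega)\, \chi(2^{-k}\sqrt H) f \Big\|_{L^p}^p \, d\omega.
\end{equation*}
For each fixed $\omega$, the operator $\sum_k r_k(\omega)\chi(2^{-k}\lambda)$ is a symbol that satisfies the hypothesis \eqref{eq:HsCondition} uniformly in $\omega$ for every $s$ (because each dyadic piece $\phi(\lambda)\chi(2^{-k}2^j\lambda)$ is nonzero for only finitely many $j$, and $\chi$ is smooth with a fixed $H^s$ norm). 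Hence Theorem~\ref{thm:multiplier3} applies and bounds that $L^p$ norm by $C_{s,p}\|f\|_{L^p}$ with a constant independent of $\omega$. Integrating in $\omega$ gives $\|S_H f\|_{L^p} \les_p \|f\|_{L^p}$.

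For the reverse inequality I would use a duality/polarization argument. Writing $f = \sum_k \tilde\chi(2^{-k}\sqrt H)\chi(2^{-k}\sqrt H) f$ (which holds up to finitely many overlapping indices, since $\{\chi(2^{-k}\lambda)\}$ need not be an exact partition of unity but its finite overlaps are harmless), one pairs against $g \in L^{p'}$ and estimates
\begin{equation*}
|\langle f, g\rangle| \les \sum_k \big| \langle \chi(2^{-k}\sqrt H) f, \tilde\chi(2^{-k}\sqrt H) g\rangle\big| \leq \int_{\R^3} S_H f(x)\, \tilde S_H g(x)\, dx \leq \|S_H f\|_{L^p}\, \|\tilde S_H g\|_{L^{p'}},
\end{equation*}
where $\tilde S_H$ is the square function built from $\tilde\chi$. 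By the forward direction (which applies to any admissible dyadic cutoff, hence to $\tilde\chi$ as well), $\|\tilde S_H g\|_{L^{p'}} \les_{p'} \|g\|_{L^{p'}}$, and taking the supremum over $g$ yields $\|f\|_{L^p} \les_p \|S_H f\|_{L^p}$. The comparability of square functions for different cutoff choices follows by applying the same two inequalities twice.

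The main obstacle is the potential failure of $\{\chi(2^{-k}\lambda)\}_{k}$ to resolve the identity exactly: one needs $f = \sum_k \tilde\chi(2^{-k}\sqrt H)\chi(2^{-k}\sqrt H) f$ to hold in $L^p$, not merely in $L^2$. This requires knowing that the partial sums $\sum_{|k|\leq N} \tilde\chi(2^{-k}\sqrt H)\chi(2^{-k}\sqrt H)$ are uniformly $L^p$-bounded and converge strongly to the identity on a dense class; the uniform bound again follows from Theorem~\ref{thm:multiplier3} applied to the (finitely overlapping, hence $H^s$-bounded) symbols $\sum_{|k|\leq N}\tilde\chi(2^{-k}\lambda)\chi(2^{-k}\lambda)$, while strong convergence to $P_c$ plus the absence of point spectrum contribution (handled by the convention $m(\sqrt H) = m(\sqrt{HP_c})$ and the decay from Lemma~\ref{lem:projections}) gives convergence to the identity on $L^p$ after the bound states are accounted for. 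Once this technical point is in place, the rest is routine Khintchine-and-duality bookkeeping.
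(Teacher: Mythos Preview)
Your proposal is correct and follows essentially the same route as the paper: Khintchine's inequality together with Theorem~\ref{thm:multiplier3} applied to the random multiplier $\sum_k \epsilon_k \chi(2^{-k}\lambda)$ gives the forward bound, and the reverse is obtained by duality using the enlarged cutoff $\tilde\chi$ exactly as you outline. The paper actually assumes $\{\chi(2^{-k}\lambda)\}$ is an exact partition of unity on $(0,\infty)$, so your ``main obstacle'' paragraph is more cautious than necessary, though the point you raise about $L^p$-convergence of the partial sums is a legitimate technical detail that the paper handles implicitly.
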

\begin{proof} We use Kintchine's inequality, see \cite{wolff}. Let $(\epsilon _k)_{k \in \Z}$ be a sequence of independent identically distributed random variables, with $\P(\epsilon_k=\pm 1)=1/2$, and consider a sequence of complex numbers $(y_k)_{k \in \Z}$. Then for each $q \in (0, \infty)$
$$
\|(y_k)\|_{\ell^2_k} \les_q \bigg(\E \bigg|\sum_{k \in \Z} \epsilon_k y_k\bigg|^q\bigg)^{1/q} \les_q \|(y_k)\|_{\ell^2_k}.
$$

We apply this bound to $y_k = [\chi(2^{-k} \sqrt H) f](x)$, pointwise for each $x \in \R^3$. Then $[S_H f](x)$ is bounded by the $q$-th moment of $m_\epsilon(\sqrt H)(x)$, where $m_\epsilon$ is the random Mihlin multiplier
$$
m_\epsilon(\lambda) = \sum_{k \in \Z} \epsilon_k \chi(2^{-k} \lambda).
$$
These random multipliers uniformly fulfill the hypotheses of Theorem \ref{thm:multiplier3}, hence
$$
\|m_\epsilon(\sqrt H) f\|_{L^p} \les \|f\|_{L^p},
$$
uniformly over all choices of $(\epsilon_k)$. Setting e.g.\;$q=1$ above, we obtain that for $p \in (1, \infty)$
$$
\|S_H f\|_{L^p} \les \|\E |m_\epsilon(\sqrt H)|\|_{L^p} \les \E \|m_\epsilon(\sqrt H)\|_{L^p} \les \E \|f\|_{L^p} = \|f\|_{L^p}.
$$
The other inequality follows in a standard manner from the previous one, by duality. We repeat the usual proof here. Consider a sequence of slightly larger cutoffs $\tilde \chi \in C^\infty_c(\R)$, such that $\tilde \chi = 1$ on $\supp \chi$ and $\supp \tilde \chi \subset [1/2, 4]$. Fix $p \in (1, \infty)$. Then for any two functions $f \in L^p$, $g \in L^{p'}$
$$\begin{aligned}
|\langle f, g \rangle| &= \sum_{k \in \Z} \langle f, \chi(2^{-k} \sqrt H) g \rangle = \sum_{k \in \Z} \langle \tilde \chi(2^{-k} \sqrt H) f, \chi(2^{-k} \sqrt H) g \rangle \\
&\leq \int_{\R^3} \sum_{k \in \Z} |[\tilde \chi(2^{-k} \sqrt H) f](x) [\chi(2^{-k} \sqrt H) g](x)| \dd x \\
&\leq \langle \tilde S_H f, S_H g \rangle \les \|\tilde S_H f\|_{L^p} \|S_H g\|_{L^{p'}} \les \|f\|_{L^p} \|S_H g\|_{L^{p'}},
\end{aligned}$$
because the modified Littlewood-Paley square function $\tilde S_H$ is comparable to the standard one, hence to $f$ itself, in the $L^p$ norm.

Choosing $f$ such that $\|f\|_{L^p} \les 1$ and $\langle f, g \rangle = \|g\|_{L^{p'}}$, it follows that for $p' \in (1, \infty)$
$$
\|g\|_{L^{p'}} \les \|S_H g\|_{L^{p'}},
$$
which is the desired conclusion.
\end{proof}

\begin{proof}[Proof of Theorem~\ref{thm:multiplier3}]
By linearity, it suffices to prove the statements when $M_s = 1$ with a bound
that does not depend on $m$ in any other way.
First use the Stone formula for the spectral measure of $H$ to write

$$
m(\sqrt{H}) = (\pi i)^{-1}\int_{-\infty}^\infty \lambda\tilde{m}(\lambda)R_V^+(\lambda^2)\,d\lambda.
$$
This time it will be convenient to use the resolvent identity 
$$
R_V^+(\lambda^2) = R_0^+(\lambda^2) - R_V^+(\lambda^2)VR_0^+(\lambda^2)
$$
in the right-hand integral.  The first term is just the spectral representation of $m(\sqrt{-\Delta})$, which is a Calder\'on-Zygmund operator and therefore bounded on $L^p(\R^3)$, $1 < p < \infty$.  We prepare the second term by breaking the spectral multipliers into compact intervals.
\begin{align*}
m(\sqrt{H}) - m(\sqrt{-\Delta}) 
&= \sum_{k=-\infty}^\infty \phi(2^{-k} \sqrt{H})m(\sqrt{H})P_c - \phi(2^{-k}(-\Delta))m(\sqrt{-\Delta}) \\
&= \sum_{k=-\infty}^\infty -(\pi i)^{-1} \int_{-\infty}^\infty \phi(2^{-k}|\lambda|) \lambda \tilde{m}(\lambda)
R_V^+(\lambda^2)VR_0^+(\lambda^2)\,d\lambda \\
&= \sum_{k=-\infty}^\infty T_k,
\end{align*}
with the summation on the right side of the top line converging in the strong operator topology of $\B(L^2(\R^3))$. 

Let $g_k(\lambda)= \phi(2^{-k}|\lambda|)\lambda \tilde{m}(\lambda)$
so that $\sum_k g_k(\lambda) = \lambda \tilde{m}(\lambda)$.
The two main estimates on the Fourier transform of $g_k$ are
\begin{equation}\label{eq:dyadicbounds}
|\check{g}_k(t)| \les 2^{2k} \quad \text{and} \quad 
\|\check{g}_k(\,\cdot\,)\langle 2^k\,\cdot\,\rangle^s \|_{L^2(\R)} \les 2^{\frac32 k}.
\end{equation}

The integral kernel of $T_k$ can be written out as
\begin{equation*}
T_k(x,y) = -(4\pi^2 i)^{-1} \int_{-\infty}^\infty \int_{\R^3}
g_k(\lambda) R_V^+(\lambda^2)(x, z) V(z) \frac{e^{i\lambda |z-y|}}{|z-y|}\,dz d\lambda
\end{equation*}
Compact support of $g_k$ and the resolvent kernel bound~\eqref{eq:pointwise} show that the integral converges absolutely when $x \not= y$.  After reversing the order of integration and applying the Parseval identity, one obtains
\begin{align*}
T_k(x,y) &= -(4\pi^2i)^{-1} \int_{\R^3} \int_{-\infty}^\infty
\check{g}_k(t+|z-y|) \mathcal{F}(R_V^+({\scriptstyle (\,\cdot\,)^2}))(t, x, z) \frac{V(z)}{|z-y|}\,dt dz \\
&= -(4\pi^2i)^{-1} \int_{\R^3} \int_{|x-z|}^\infty
\check{g}_k(t+|z-y|) \mathcal{F}(R_V^+({\scriptstyle (\,\cdot\,)^2}))(t, x, z) \frac{V(z)}{|z-y|}\,dt dz \\
& \quad -(4\pi^2i)^{-1} \int_{\R^3} \int_{-\infty}^{|x-z|}
\check{g}_k(t+|z-y|) \mathcal{F}(R_V^+({\scriptstyle (\,\cdot\,)^2}))(t, x, z) \frac{V(z)}{|z-y|}\,dt dz \\
&= T_{k,1}(x,y) + T_{k,2}(x,y) 
\end{align*}

The $t$ integral in $T_{k,1}$ is exactly the type considered in Lemma~\ref{lem:LpTransfer}. In this way it is controlled indirectly by the estimates for $\check{g}_k$ in~\eqref{eq:dyadicbounds} and H\"older's inequality.  More precisely, for $1 \leq p \leq 2$,
\begin{align*}
\|T_{k,1}(\,\cdot\, , y)\|_{L^p(\R^3)}
&\les \int_{\R^3} \big\| t^{\frac2p - 1} \check{g}_k(t+|z-y|)\big\|_{L^p(\R_+)} \frac{V(z)}{|z-y|}\,dz \\
&\les \int_{\R^3} 2^{\frac32 k} \Big\| \frac{t^{2/p - 1}}{\langle 2^k(t + |z-y|)\rangle^s}\Big\|_{L^{\frac{2p}{2-p}}(\R_+)} \frac{V(z)}{|z-y|}\,dz.
\end{align*}
Direct inspection of the norm on $\R_+$ shows that it is concentrated on the interval $t \leq 2^{-k}$ if $2^k|z-y| < 1$ and on the interval $t \leq |z-y|$ if $2^{k}|z-y| > 1$.  The tail is integrable since $s - \frac2p + 1 > \frac1p - \frac12$.  The resulting bound is
\begin{equation*}
\|T_{k,1}(\,\cdot\, , y)\|_{L^p(\R^3)}
\les \int_{\R^3} \frac{2^{(3 - \frac3p)k}}{\langle2^k|z-y|\rangle^{s + \frac32- \frac3p}} \frac{V(z)}{|z-y|}\,dz,
\end{equation*}
and since $3 - \frac3p$ and $s - \frac32$ are both positive, this can be summed over $k$ as well.
\begin{equation}
\sum_{k \in \Z} \|T_{k,1}(\,\cdot\, , y)\|_{L^p(\R^3)}
\les \int_{\R^3} \frac{V(z)}{|z-y|^{4-\frac3p}}\,dz.
\end{equation}

Thus for $p$ in the range  $1 < p \leq 2$,
\begin{equation*}
\Big| \int_{\R^3} \int_{\R^3} \frac{V(z)}{|z-y|^{4 - \frac3p}} f(y)\,dy dz \Big| = \Big|\langle V \ast \frac 1 {|x|^{4-3/p}}, f\rangle\Big| \les \|V\|_{L^{3/2, \infty}} \Big\|\frac 1{|x|^{4-\frac3p}}\Big\|_{L^{\frac {3p}{4p-3}, \infty}} \|f\|_{L^{p, 1}(\R^3)},
\end{equation*}
as $L^{3/2,\infty} \ast L^{3p/(4p-3), \infty} \mapsto L^{p', \infty}$.  In conclusion, $\sum\limits_{k\in \Z} T_{k,1}$ describes a bounded operator from $L^{p, 1}(\R^3)$ to $L^p(\R^3)$.

The analysis of $T_{k,2}$ is more explicit. Observe that
\begin{equation*}
\mathcal{F}(R_V^+({\scriptstyle (\,\cdot\,)^2}))(t) = \mathcal F \sum_{j=1}^J \frac {P_j(x, z)}{-\mu_j^2-\lambda^2} = -\frac 1 {4\pi} \sum_{j=1}^J P_j(x, z) \frac {e^{-\mu_j|t|}}{\mu_j}.
\end{equation*}
Then, since $\check g_k$ are odd functions,
\begin{equation*}\begin{aligned}
&\Big| \int_{-\infty}^{|x-z|} \check{g}_k(t+|z-y|) \mathcal{F}(R_V^+({\scriptstyle (\,\cdot\,)^2}))(t, x, z) \,dt \Big|
= \\
&= \frac 1 {4\pi} \Big| \int_{-\infty}^{|x-z|} \check{g}_k(t+|z-y|) \sum_{j=1}^J P_j(x, z) \frac {e^{-\mu_j |t|}} {\mu_j} \,dt \Big|\\
&\les \sum_{j=1}^J \frac 1 {|x-z|} \Big| \int_{-\infty}^{-|x-z|} \check{g}_k(t+|z-y|) e^{\mu_j t} \,dt\Big|\\
&\les \frac{1}{|x-z|} \sum_{j=1}^J \Big|\Big(\frac{g_k(\lambda)}{\lambda - i\mu_j}\Big)\check{\phantom{i}}(|x-z| + |z-y|)\Big|.
\end{aligned}\end{equation*}
By integrating in spherical coordinates centered at $z$, the $L^p(\R^3)$ norm of the right-hand side with respect to $x$
is exactly 
$\sum_j \big\| t^{\frac2p - 1} \big(\frac{g_k(\lambda)}{\lambda - i\mu_j}\big)\check{\phantom{i}}(t + |z-y|)\big\|_{L^p(\R_+)}$.

Observe that the function $(\lambda -i\mu_j)^{-1}$ is smooth and roughly constant on each
interval $\lambda \in \pm [2^{k-1}, 2^{k+1}]$ with size $\min(\mu_j^{-1},2^{-k}) \leq\mu_j^{-1}$. 

Thus each function $\frac{g_k}{\lambda - i\mu_j}$ has the same $H^s$ control as $g_k$ itself, so its Fourier transform is subject to the bounds~\eqref{eq:dyadicbounds}. Every estimate that is made for $\tilde{m}$ and its Fourier transform holds
for $\frac{\tilde{m(\lambda)}}{\lambda - i\mu_j}$ with a constant possibly $\max{\mu_j^{-1}}$ times as large. It follows that
\begin{equation*}
\|T_{k,2}(\,\cdot\, , y)\|_{L^p(\R^3)}
\les \int_{\R^3} 2^{\frac32 k} \Big\| \frac{t^{2/p - 1}}{\langle 2^k(t + |z-y|)\rangle^s}\Big\|_{L^{\frac{2p}{2-p}}(\R_+)} \frac{V(z)}{|z-y|}\,dz
\end{equation*}
and the argument that $\sum\limits_{k\in\Z} T_{k,2}$ maps $L^{p, 1}(\R^3)$ to $L^{p}(\R^3)$ for $1 < p \leq 2$ now proceeds identically to the discussion of $T_{k,1}$ above.

Recall once more that $m(\sqrt{H}) = m(\sqrt{-\Delta}) + \sum_k T_{k,1} + \sum_k T_{k,2}$.  We now know that the right-hand side is bounded as an operator from $L^{p,1}(\R^3)$ to $L^p(\R^3)$ for any $1 < p \leq 2$ and the left side is bounded on $L^2(\R^3)$.  Marcinkiewicz interpolation shows that $m(\sqrt{H})$ is also bounded on $L^p(\R^3)$ for all $1 < p \leq 2$, and by duality the range is extended to all $1 < p < \infty$.

In order to prove the $|x-y|^{-3}$ bound for the integral kernel of $m(\sqrt H)$, we start from
\begin{align*}
m(\sqrt{H}) &= (2\pi i)^{-1} \int_0^\infty m(\sqrt{\lambda})
(R_V^+(\lambda) - R_V^-(\lambda))\,d\lambda \\
&= (\pi i)^{-1} \int_0^\infty \lambda m(\lambda) 
(R_V^+(\lambda^2) - R_V^-(\lambda^2))\,d \lambda \\
&= (\pi i)^{-1} \int_{-\infty}^\infty \lambda \tilde{m}(\lambda) R_V^+(\lambda^2)\,d\lambda,
\end{align*}
where $\tilde{m}$ is the even extension of $m$ to the full line.
Then by Parseval's identity, the kernel of $m(\sqrt{H})$ can be re-written once more as
\begin{equation} \label{eq:Stonemultiplier}
K(x,y) = (\pi i)^{-1}\int_{-\infty}^\infty (\lambda \tilde{m})\widecheck{\phantom{i}}(t) 
\mathcal F R_V^+({\scriptstyle (\,\cdot\,)^2})(t,x,y)\,dt.
\end{equation}
We split this into two parts,
\begin{align} \label{eq:conemultiplier}
K_1(x,y) &= (\pi i)^{-1}\int_{t \geq |x-y|} (\lambda \tilde{m})\widecheck{\phantom{i}}(t) 
\mathcal F R_V^+({\scriptstyle (\,\cdot\,)^2})(t,x,y)\,dt \\
K_2(x,y) &= (\pi i)^{-1}\int_{t < |x-y|} (\lambda \tilde{m})\widecheck{\phantom{i}}(t) 
\mathcal F R_V^+({\scriptstyle (\,\cdot\,)^2})(t,x,y)\,dt. \label{eq:boundstatesmultiplier}
\end{align}

Estimates for $K_1(x,y)$ generally follow from Lemma~\ref{lem:LpTransfer}.

We use Proposition~\ref{prop:cone} to write out
$\mathcal F R_V^+({\scriptstyle (\,\cdot\,)^2})(t)$ as a sum of exponential functions.
Observe that
\begin{equation*}
\mathcal F^{-1} \big(e^{\mu_j t}{\bf 1}_{t <|x-y|}\big)(\lambda) 
= \frac{e^{(i\lambda + \mu_j)|x-y|}}{2\pi i(\lambda -i\mu_j)}.
\end{equation*}
After undoing Parseval's identity in~\eqref{eq:boundstatesmultiplier}, the quantity we need to bound is
\begin{equation}\label{eq:K2bound}
\begin{aligned} 
\bigg|\sum_{j=1}^J \frac{e^{\mu_j|x-y|}P_j(x,y)}{2\pi\mu_j}
 &\int_\R \frac{\lambda \tilde{m}(\lambda)}{\lambda - i\mu_j}e^{i\lambda|x-y|}\,d\lambda\bigg|
\\
&\les \frac{1}{|x-y|} \sum_{j=1}^J 
\Big|\int_\R \lambda \Big(\frac{\tilde{m}(\lambda)}{\lambda - i\mu_j}\Big)e^{i\lambda|x-y|}\,d\lambda\Big| \\
&= \frac{1}{|x-y|} \sum_{j=1}^J \Big| \Big(\frac{\lambda \tilde{m}}{\lambda - i\mu_j}\Big)\widecheck{\phantom{i}}(|x-y|)\Big|
\end{aligned}
\end{equation}

If  $m$ satisfies~\eqref{eq:HsCondition} with the stronger assumption $s > 2$,
then the standard dyadic proof of the Mihlin-H\"ormander theorem
shows that $|(\lambda \tilde{m})\widecheck{\phantom{i}}(t)|
\les |t|^{-2}$ away from $t=0$.  The same is true for the inverse Fourier transform of each 
$\frac{\lambda \tilde{m}}{\lambda - i\mu_j}$.  Then $|K_1(x,y)| \les |x-y|^{-3}$ by applying Lemma~\ref{lem:LpTransfer}
with $p=\infty$, $\beta = 3$.

Meanwhile $|K_2(x,y)| \les |x-y|^{-3}$ by direct inspection of~\eqref{eq:K2bound}.
\end{proof}

\section{Weak-type $(1, 1)$ bounds for Mihlin multipliers}

While Theorem~\ref{thm:multiplier3} hints at Mihlin multipliers $m(\sqrt{H})$ being Calder\'on-Zygmund operators thanks to the pointwise kernel bound $|K(x,y)| \les |x-y|^{-3}$, it is not clear that they satisfy the requisite cancellation conditions.  In this section we state a weak-type (1,1) bound that holds for the same multipliers as in the second part of Theorem~\ref{thm:multiplier3}, with one additional assumption on the potential $V$.

\begin{theorem} \lb{thm:weakL1}
Suppose $H = -\Delta + V$, where $V \in \mc K_0$ has the property
\begin{equation} \label{eq:variantKato}
\|V\|_{\tilde \K} := \sup_{x \in \R^3} \sum_{\ell \in \mathbb Z} \|\chi(|z-x| \sim 2^\ell) V(z)\|_{\K_z} < \infty.
\end{equation}
Further suppose that $H$ has no eigenvalue or resonance at
zero, and no positive eigenvalues.
 Let $\phi$ be a $C^\infty$ function supported on
$[\frac12, 2]$ such that $\{\phi(2^{-k}\lambda)\}_{k\in\mathbb Z}$ forms a partition of unity for
$\R^+$.  Suppose $m: (0,\infty) \to \mathbb C$ satisfies
\begin{equation*}
\sup_k\|\phi(\lambda)m(2^k\lambda)\|_{H^s} = M_s < \infty \text{ for some } s> 2.
\end{equation*}
Then $m(\sqrt{H}) \in \B(L^1, L^{1,\infty})$ and $m(\sqrt{H}) \in \B(L^p)$, $1<p<\infty$. 
\end{theorem}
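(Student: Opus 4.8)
The plan is to prove both conclusions by a direct perturbation of the free multiplier, following the scheme announced in the introduction. Write $m(\sqrt H):=m(\sqrt{HP_c})$ as
$$
m(\sqrt H) = m(\sqrt{-\Delta}) + \bigl(m(\sqrt H)P_c - m(\sqrt{-\Delta})\bigr) + c\,P_b,
$$
where $P_b = I-P_c$ is the finite-rank projection onto the bound states; its kernel is smooth and exponentially decaying, so $cP_b$ is harmless on every $L^p$, $1\le p\le\infty$. The first summand $m(\sqrt{-\Delta})$ is a classical H\"ormander--Mihlin multiplier — the hypothesis $s>2$ comfortably exceeds the classical threshold $s>\tfrac32$ — hence of weak type $(1,1)$ and bounded on $L^p$, $1<p<\infty$. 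For the difference, insert Stone's formula and the resolvent identity $R_V^+(\lambda^2)=R_0^+(\lambda^2)-R_V^+(\lambda^2)VR_0^+(\lambda^2)$ exactly as in the proof of Theorem~\ref{thm:multiplier3}; reversing the order of integration and applying Parseval in the spectral variable gives
\begin{equation*}
\bigl(m(\sqrt H)P_c-m(\sqrt{-\Delta})\bigr)(x,y)=c\int_{\R^3}\int_{-\infty}^\infty \check g(t+|z-y|)\,\mathcal F(R_V^+({\scriptstyle(\,\cdot\,)^2}))(t,x,z)\,\frac{V(z)}{|z-y|}\,dt\,dz ,
\end{equation*}
where $g(\lambda)=\lambda\tilde m(\lambda)$ and $|\check g(\tau)|\les|\tau|^{-2}$ for $\tau\ne0$ because $s>2$.

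Next I would split this kernel into a Calder\'on--Zygmund part and a part dominated by the Hardy--Littlewood maximal operator. First decompose $\mathcal F(R_V^+({\scriptstyle(\,\cdot\,)^2}))$ into its contribution on the light cone $\{t\ge|x-z|\}$ and on $\{t<|x-z|\}$; by Proposition~\ref{prop:cone} the latter is a finite combination of the smooth exponential kernels $e^{-\mu_j|t|}P_j(x,z)/\mu_j$, and the corresponding operator has a radial convolution kernel of size $\les|x-y|^{-2}\in L^{3/2,\infty}$ composed with multiplication by the (exponentially decaying) eigenfunctions, so it is bounded on $L^1$ and on every $L^p$, $1<p<\infty$. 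For the light-cone part, which obeys $t+|z-y|\ge|x-z|+|z-y|\ge|x-y|$, I would isolate the portion in which the argument $t+|z-y|$ of $\check g$ is comparable to or larger than $|x-y|$ and the resolvent kernel is away from its light-cone singularity: using $|\check g(\tau)|\les|\tau|^{-2}$, the conical bound $\int_{|x-z|\le t}|\mathcal F R_V^+|(t,x,z)\,dx\les t$ of Lemma~\ref{lem:conicalbound}, and the resolvent kernel bound $|R_V^\pm(\lambda)(x,y)|\les|x-y|^{-1}$ of Corollary~\ref{cor:resolventkernel}, this portion assembles into an operator whose kernel satisfies the size estimate $\les|x-y|^{-3}$ (the second conclusion of Theorem~\ref{thm:multiplier3}) \emph{and}, because $s>2$ leaves half a derivative of room beyond the size estimate, a H\"ormander regularity estimate $\int_{|x-y|>2|y-y'|}|K(x,y)-K(x,y')|\,dx\les1$; together with $L^2$-boundedness this exhibits a Calder\'on--Zygmund operator. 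The complementary portion — the contribution of the region where the argument of $\check g$ is small, which forces $z$ to lie within distance $\sim|x-z|$ of both $x$ and $y$ — I would estimate pointwise: for a fixed such $z$ the remaining $y$-integral of $|f|$ ranges over a ball $\{|z-y|\les|x-z|\}\subset\{|x-y|\les|x-z|\}$, so it is $\les|x-z|^{3}(Mf)(x)$; matching this against the $\tfrac1{|x-z|}$-weighted integral of $|V|$ over the dyadic annulus $\{|z-x|\sim2^\ell\}$, which the Kato norm $\|\chi(|z-x|\sim2^\ell)V\|_{\K}$ controls, and summing over $\ell$, produces the pointwise bound $\les\|V\|_{\tilde\K}\,(Mf)(x)$. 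This is exactly where the hypothesis $\|V\|_{\tilde\K}<\infty$, with its $\ell^1$-summability of Kato norms across scales, is used instead of $V\in\K_0$.

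With these pieces in hand, $m(\sqrt H)$ is the sum of a classical Mihlin multiplier, a Calder\'on--Zygmund operator, an operator pointwise dominated by $\|V\|_{\tilde\K}$ times the maximal operator, and a finite-rank smoothing operator. Calder\'on--Zygmund operators and the maximal operator are of weak type $(1,1)$ and bounded on $L^p$ for $1<p<\infty$, and the other two summands trivially so; hence $m(\sqrt H)\in\B(L^1,L^{1,\infty})$ and $m(\sqrt H)\in\B(L^p)$ for $1<p<\infty$. (Alternatively, once weak type $(1,1)$ is known, $L^p$-boundedness for $1<p\le2$ follows by Marcinkiewicz interpolation with the trivial bound $\|m(\sqrt H)\|_{L^2\to L^2}\les\|m\|_{L^\infty}\les M_s$, and the range $2\le p<\infty$ by duality since $\overline m$ satisfies the same hypotheses.)

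I expect the genuine difficulty to lie in the decomposition of the second paragraph: one must split the light-cone part of the kernel cleanly enough that one half carries the full H\"ormander cancellation while the other half is \emph{exactly} absorbed by the maximal function. There is no slack available — Theorem~\ref{thm:multiplier3} only supplies the scaling-critical size bound $|K(x,y)|\les|x-y|^{-3}$, which by itself implies neither $L^p$-boundedness nor weak type $(1,1)$ — so one has to extract a true cancellation estimate from the extra regularity $s>2$ and from the fact that the singularity of $\mathcal F R_V^+$ along the cone is integrable after one use of Lemma~\ref{lem:conicalbound}. Controlling the non-cancelling remainder by $Mf$ is what forces the stronger norm $\|V\|_{\tilde\K}$, the plain Kato norm being adequate only at a single length scale.
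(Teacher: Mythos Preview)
Your strategy is the paper's: write $m(\sqrt H)-m(\sqrt{-\Delta})$ via Stone's formula and the resolvent identity, then split the resulting kernel into a Calder\'on--Zygmund piece and a piece pointwise dominated by the Hardy--Littlewood maximal function, with the $\tilde\K$ norm entering precisely in the latter. That is correct, and your identification of where $s>2$ and $\|V\|_{\tilde\K}$ are used is accurate.

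The decomposition you describe, however, is not the one that works. On the light cone $t\ge|x-z|$ the argument $t+|z-y|$ of $\check g$ is always $\ge|x-y|$, so there is no ``small argument'' region to isolate. The paper's split is in the $z$-variable: a smooth cutoff $\eta(|z-y|/|x-y|)$ supported in $[0,\tfrac12)$ separates $T_k=T_{k,1}+T_{k,2}$. In the support of $\eta$ one has $|x-z|\sim|x-y|$; the $t$-integral (over \emph{both} $t\ge|x-z|$ and $t<|x-z|$, which yield the same pointwise bound $2^{2k}|x-z|^{-1}(1+2^k(|x-z|+|z-y|))^{-s}$) then gives, after summing in $k$,
\[
\sum_k|T_{k,1}(x,y)|\ \les\ \frac{1}{|x-y|^3}\,\big\|\chi(|z-x|\sim|x-y|)V(z)\big\|_{\K_z},
\]
where the Kato norm absorbs the singular factor $|z-y|^{-1}$ (not the $y$-average as your sketch has it). Summing over dyadic shells $|x-y|\sim2^\ell$ then produces $\|V\|_{\tilde\K}\,Mf(x)$. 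On the $T_{k,2}$ side, $|z-y|\gtrsim|x-y|$ lets one trade $|z-y|^{-1}$ for $|x-y|^{-1}$; differentiating in $y$ now hits only the smooth factors $\check g_k(t+|z-y|)$ and $(1-\eta)|z-y|^{-1}$, so one obtains both the size bound $|x-y|^{-3}$ and a H\"older regularity estimate in $y$. The $L^2$ bound needed for the Calder\'on--Zygmund argument is obtained by subtraction, since $m(\sqrt H)$, $m(\sqrt{-\Delta})$, and $\sum_kT_{k,1}$ are already known to be $L^2$-bounded.

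Finally, your claim that the bound-state contribution ($t<|x-z|$) is ``bounded on $L^1$'' is not justified and is not how the paper proceeds: that piece still carries $V(z)|z-y|^{-1}$ and must be fed through the same $\eta$-split. The paper handles the two $t$-ranges uniformly, obtaining an identical pointwise estimate for each before splitting in $z$.
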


Before proving this theorem, note that not only $\dot W^{1, 1} \subset \K$, as shown in \cite{becgol}, but in fact $\dot W^{1, 1} \subset \tilde \K$ as well.
\begin{lemma}\lb{w11embed} $\dot W^{1, 1} \subset \tilde \K$.
\end{lemma}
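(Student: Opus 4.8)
The plan is to show directly from the definition that the $\dot W^{1,1}$ norm controls $\|V\|_{\tilde\K}$. Recall first the known embedding $\dot W^{1,1}(\R^3) \subset \K$, which follows from the representation $f(z) = c\int_{\R^3} \frac{(x-z)\cdot\nabla f(x)}{|x-z|^3}\,dx$ together with the fact that the Riesz-potential kernel $\frac{1}{|z-y||x-z|^2}$ integrates (in $z$) to a constant multiple of $\frac{1}{|x-y|}$. The point here is to run the same estimate but with a spatial cutoff inserted, and then sum the resulting dyadic pieces with a gain.

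First I would fix a center $x_0 \in \R^3$ and a dyadic annulus $A_\ell = \{z : |z - x_0| \sim 2^\ell\}$, and estimate $\|\chi_{A_\ell}(z) V(z)\|_{\K_z}$. Writing $V(z) = c\int \frac{(w-z)\cdot\nabla V(w)}{|w-z|^3}\,dw$, the Kato norm of $\chi_{A_\ell} V$ is bounded by
\[
\sup_{y}\int_{\R^3}\frac{\chi_{A_\ell}(z)}{|z-y|}\int_{\R^3}\frac{|\nabla V(w)|}{|w-z|^2}\,dw\,dz
= \int_{\R^3}|\nabla V(w)|\Big(\sup_y \int_{\R^3}\frac{\chi_{A_\ell}(z)}{|z-y|\,|w-z|^2}\,dz\Big)\,dw
\]
after Tonelli. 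The inner integral over $z$ is the quantity that produces the dyadic gain: restricting $z$ to an annulus of radius $\sim 2^\ell$ around $x_0$ forces either $|z-y| \gtrsim 2^\ell$ (when $y$ is far) or $|w-z|$ to be bounded below by the distance from $w$ to that annulus; in all cases one gets a bound of the form $\min\big(2^\ell,\ \mathrm{dist}(w, A_\ell)^{-1}\cdot(\text{something})\big)$, which decays geometrically in $|\ell - \ell(w)|$ where $2^{\ell(w)} \sim |w - x_0|$. Concretely I expect a bound like $\sup_y\int \frac{\chi_{A_\ell}(z)}{|z-y||w-z|^2}\,dz \lesssim 2^{-|\ell - \ell(w)|}$, or at worst with the exponent halved; what matters is that it is summable in $\ell$.

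Summing over $\ell$ then gives $\sum_\ell \|\chi_{A_\ell} V\|_{\K} \lesssim \int_{\R^3}|\nabla V(w)|\big(\sum_\ell 2^{-|\ell - \ell(w)|}\big)\,dw \lesssim \|\nabla V\|_{L^1} = \|V\|_{\dot W^{1,1}}$, uniformly in $x_0$, which is exactly $\|V\|_{\tilde\K} \lesssim \|V\|_{\dot W^{1,1}}$. The main obstacle is verifying the geometric decay of the inner $z$-integral cleanly across all the positional cases (the center $y$ can be anywhere, and $w$ can be near or far from the annulus $A_\ell$); this is a routine but slightly fiddly splitting into the regimes $|w - x_0| \ll 2^\ell$, $|w-x_0| \sim 2^\ell$, and $|w - x_0| \gg 2^\ell$, in each of which one bounds $\frac{1}{|w-z|^2}$ by a constant on most of $A_\ell$ (extracting a factor $\mathrm{dist}(w,A_\ell)^{-2}\cdot(\text{volume})$ or, near the singularity, using that $\int_{|w-z|<2^\ell}|w-z|^{-2}\,dz \lesssim 2^\ell$) and then integrates $|z-y|^{-1}$ over a ball, which costs at most $2^{2\ell}$, against which the $2^\ell$ volume scaling and the $\mathrm{dist}$-factor conspire to give the claimed decay. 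One should double-check that the borderline case $|w-x_0|\sim 2^\ell$ — where the singularity of $|w-z|^{-2}$ sits inside $A_\ell$ — still yields an $\ell$-summable contribution after integrating against $|\nabla V|$; it does, because that case contributes only for $O(1)$ values of $\ell$ per $w$.
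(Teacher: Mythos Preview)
Your proposed approach has a genuine gap. After interchanging the $z$- and $w$-integrals you move the supremum over $y$ inside the $w$-integral; this is only an inequality, not the equality you wrote, and more importantly the resulting bound is not summable in $\ell$. With
\[
I_\ell(w,y):=\int_{A_\ell}\frac{dz}{|z-y|\,|w-z|^2},
\]
your argument requires $\sum_\ell \sup_y I_\ell(w,y)<\infty$. The decay you claim does hold when $\ell<\ell(w)$ (indeed $\sup_y I_\ell(w,y)\lesssim 2^{-2(\ell(w)-\ell)}$ there), but it fails completely when $\ell>\ell(w)$: for $|w-x_0|\ll 2^\ell$ one has $|w-z|\sim 2^\ell$ on $A_\ell$, and choosing $y=x_0$ gives $|z-y|\sim 2^\ell$ as well, so $I_\ell(w,x_0)\sim 2^{-3\ell}|A_\ell|\sim 1$. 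Hence $\sup_y I_\ell(w,y)\gtrsim 1$ for \emph{every} $\ell>\ell(w)$, and the $\ell$-sum diverges. Separately, in the borderline regime $|w-x_0|\sim 2^\ell$ one may take $y=w$ and obtain $I_\ell(w,w)=\int_{A_\ell}|z-w|^{-3}\,dz=+\infty$; the remark that this happens for only $O(1)$ values of $\ell$ does not help once the supremum is infinite. Pushing $\sup_y$ past the $w$-integral decouples the choice of $y$ from the location of $\nabla V$, and that loss is fatal here.

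The paper's proof avoids the representation formula entirely. It takes \emph{smooth} dyadic cutoffs $\chi_\ell$, invokes the known embedding $\dot W^{1,1}\subset\K$ to write $\|\chi_\ell V\|_\K\lesssim\|\nabla(\chi_\ell V)\|_{L^1}$, and then controls $\sum_\ell\|\nabla(\chi_\ell V)\|_{L^1}$ by the Leibniz rule: after separating even and odd $\ell$ so the cutoffs have disjoint supports, the term $(\sum_\ell\chi_\ell)\nabla V$ is dominated by $|\nabla V|\in L^1$, while the term $(\sum_\ell\nabla\chi_\ell)V$ is dominated by $|z-x_0|^{-1}|V(z)|$, which is integrable precisely because $V\in\dot W^{1,1}\subset\K$. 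The smoothness of the cutoffs is what makes the $\dot W^{1,1}$ estimate of the localized pieces possible; your sharp-cutoff route has no substitute for this step.
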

\begin{proof} Consider a cutoff function $\chi \in C^\infty_c(\R)$ such that $\supp \chi \subset [3/4, 3]$ and $\chi(x) = 1$ when $x \in [1, 2]$. In light of the fact that $\dot W^{1, 1} \subset \K$, it suffices to show that
$$
\sum_{\ell \in \Z} \|\chi(|z-x| \sim 2^\ell) V(z)\|_{\dot W^{1, 1}_z} \les \|V\|_{\dot W^{1, 1}}.
$$
We prove this separately for even and for odd $\ell$. With no loss of generality, consider the even case. Since the even-numbered cutoffs have disjoint support, for this case the claim reduces to
$$
\bigg\|\sum_{\ell \in 2\Z} \chi(|z-x| \sim 2^\ell) V(z)\bigg\|_{\dot W^{1, 1}_z} \les \|V\|_{\dot W^{1, 1}}.
$$
By the Leibniz rule, the gradient of this expression is
$$
\nabla_z \bigg(\sum_{\ell \in 2\Z} \chi(|z-x| \sim 2^\ell) \bigg) V(z) + \bigg(\sum_{\ell \in 2\Z} \chi(|z-x| \sim 2^\ell) \bigg) \nabla V(z).
$$
The gradient of the sum of these cutoffs is of size $\frac 1 {|z-x|}$, due to scaling, so the first term is integrable, as $V \in \dot W^{1, 1} \subset \K$. The second term is integrable as well, because $\nabla V \in L^1$ and the sum of the cutoffs is bounded.

This proves the claim for the sum over the even indices and one proceeds similarly for the odd indices.
\end{proof}

\begin{observation} Note that $L^{3/2, 1} \not \subset \tilde K$, despite the fact that $L^{3/2, 1} \subset \K$. An example is given by the indicator function of the union of countably many balls: $\chi_{\bigcup_{k \geq 0} B_k}$, where $B_k$ is at distance $2^k$ from the origin and such that $\mu(B_k) \in \ell^1_k$, $\mu(B_k)^{2/3} \not \in \ell^1_k$.
\end{observation}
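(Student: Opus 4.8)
The plan is to make the scattered-ball example from the Remark completely explicit and to reduce both assertions — that $V:=\chi_{\bigcup_{k\geq 0}B_k}$ lies in $L^{3/2,1}$ but not in $\tilde\K$ — to elementary sums over the radii. Fix a ray through the origin; along it place $B_k$, a ball of radius $r_k$ centered at distance $2^k$ from the origin, for $k\geq 0$. Choose $r_k\to 0$; after multiplying all the $r_k$ by a single fixed small constant (which affects the ensuing sums only by a bounded factor) one may assume the $B_k$ are pairwise disjoint and that $B_k\subset\{|z|\sim 2^k\}$. Write $m_k:=\mu(B_k)=\frac{4}{3}\pi r_k^3$, so that $r_k^2\sim m_k^{2/3}$.

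For the membership $V\in L^{3/2,1}$, I would use that the decreasing rearrangement of the indicator of a measurable set of finite measure $m$ is $\chi_{[0,m]}$; for the disjoint union this gives $V^*=\chi_{[0,\sum_k m_k]}$ and hence $\|V\|_{L^{3/2,1}}\sim\bigl(\sum_k m_k\bigr)^{2/3}$. Thus $V\in L^{3/2,1}$ precisely when $m_k\in\ell^1_k$. This is also the point to recall the already-known inclusion $L^{3/2,1}\subset\K$, which is exactly what makes the example worth stating: the finer inclusion into $\tilde\K$ will fail for this same $V$.

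For the non-membership, I would bound $\|V\|_{\tilde\K}$ from below by evaluating the supremum in its definition at the single point $x=0$, writing $\psi_\ell(z):=\chi(|z|\sim 2^\ell)\geq 0$. Two geometric observations do the work. First, because $B_k\subset\{|z|\sim 2^k\}$ and each $\psi_\ell$ is supported in a fixed dyadic annulus, only $O(1)$ indices $\ell$ — call this set $I_k$ — have $\psi_\ell\chi_{B_k}\not\equiv 0$, and symmetrically each $\ell$ lies in $O(1)$ of the sets $I_k$. Second, $\sum_\ell\psi_\ell\geq 1$ away from the origin, so $\chi_{B_k}\leq\sum_{\ell\in I_k}\psi_\ell\chi_{B_k}$ pointwise. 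The triangle inequality for $\|\cdot\|_\K$ together with its monotonicity under $0\leq W_1\leq W_2$ then gives $\|\chi_{B_k}\|_\K\leq\sum_{\ell\in I_k}\|\psi_\ell V\|_\K$; summing over $k$ and using the bounded overlap yields $\sum_{k\geq 0}\|\chi_{B_k}\|_\K\les\sum_\ell\|\psi_\ell V\|_\K\leq\|V\|_{\tilde\K}$. Finally, testing the Kato integral of $\chi_{B_k}$ at the center $y_k$ of $B_k$ gives $\|\chi_{B_k}\|_\K\geq\int_{B_k}|z-y_k|^{-1}\dd z=2\pi r_k^2\sim m_k^{2/3}$. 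Combining, $\|V\|_{\tilde\K}\gtrsim\sum_{k\geq 0}m_k^{2/3}$, which diverges as soon as $m_k^{2/3}\notin\ell^1_k$; hence $V\notin\tilde\K$.

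To finish, one picks radii realizing both hypotheses at once: $r_k=(k+1)^{-1/2}$ works, since then $m_k\sim(k+1)^{-3/2}\in\ell^1_k$ while $m_k^{2/3}\sim(k+1)^{-1}\notin\ell^1_k$. I expect the only step that requires care rather than routine computation to be the bookkeeping in the previous paragraph: one must check that centering the dyadic annuli at the origin aligns them with the geometry of the $B_k$ well enough that the partition-of-unity-with-bounded-overlap passage costs only a constant — it is precisely the freedom to choose $x=0$ in the supremum defining $\|\cdot\|_{\tilde\K}$ that makes this work, whereas a careless centering would threaten a logarithmic loss.
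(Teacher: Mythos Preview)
Your argument is correct and is exactly the verification the paper leaves implicit: the Remark in the paper states the example without proof, and your computation---rearrangement for the $L^{3/2,1}$ membership, then taking $x=0$ in the $\tilde\K$ supremum and reducing to $\sum_k\|\chi_{B_k}\|_\K\gtrsim\sum_k r_k^2\sim\sum_k m_k^{2/3}$---is precisely the intended check. The bounded-overlap bookkeeping you flag is indeed the only nontrivial point, and you handle it correctly.
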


\begin{proof}[Proof of Theorem \ref{thm:weakL1}]
Once again we use the Stone formula to write
$$
m(\sqrt{H}) = (\pi i)^{-1}\int_{-\infty}^\infty \lambda\tilde{m}(\lambda)R_V^+(\lambda^2)\,d\lambda.
$$
This time it will be convenient to use the resolvent identity 
$$
R_V^+(\lambda^2) = R_0^+(\lambda^2) - R_V^+(\lambda^2)VR_0^+(\lambda^2)
$$
in the right-hand integral.  The first term is just the spectral representation of $m(\sqrt{-\Delta})$, which is a Calder\'on-Zygmund operator and is of weak-type (1,1).  We prepare the second term by breaking the spectral multipliers into compact intervals.
\begin{align*}
m(\sqrt{H}) - m(\sqrt{-\Delta}) 
&= \sum_{k=-\infty}^\infty \phi(2^{-k} \sqrt{H})m(\sqrt{H})P_c - \phi(2^{-k}(-\Delta))m(\sqrt{-\Delta}) \\
&= \sum_{k=-\infty}^\infty -(\pi i)^{-1} \int_{-\infty}^\infty \phi(2^{-k}|\lambda|) \lambda \tilde{m}(\lambda)
R_V^+(\lambda^2)VR_0^+(\lambda^2)\,d\lambda \\
&= \sum_{k=-\infty}^\infty T_k,
\end{align*}
with the summation on the right side of the top line converging in the strong operator topology of $\B(L^2(\R^3))$. 

For each $k$, the kernel of $T_k$ is obtained by composing the operators $R_V^+(\lambda^2) V R_0^+(\lambda^2)$
inside the integral.  Let $g_k(\lambda) = \phi(2^{-k}|\lambda|) \lambda \tilde{m}(\lambda)$, similar to its usage in the
previous section.
\begin{equation}
T_k(x,y) = \frac{-1}{4\pi^2 i} \int_{-\infty}^{\infty} \int_{\R^3}
g_k(\lambda) R_V^+(\lambda^2)(x,z)V(z) \frac{e^{i\lambda|z-y|}}{|z-y|}\,dz d\lambda
\end{equation} 
For a fixed choice of $k$, the estimate $|R_V^+(\lambda^2)(x,z)| \les |x-z|^{-1}$
ensures that the integral over $\R^3 \times \R$ is absolutely convergent
(and bounded by $2^{2k}|x-y|^{-1}$), hence the order can be rearranged.
Applying Parseval's identity yields
\begin{equation*}
\int_{-\infty}^{\infty} g_k(\lambda) R_V^+(\lambda^2)(x,z)e^{i\lambda|z-y|}\,d\lambda
=  \int_{-\infty}^\infty \widecheck{g}_k(t+|z-y|) \mathcal{F}(R_V^+({\scriptstyle (\,\cdot\,)^2}))(t, x, z)\,dt.
\end{equation*}
Now the integral over $z \in \R^3$ is subdivided into two regions.  Let $\eta$ be a cutoff function on $[0,\infty)$
that is supported in $[0,\frac12)$ and identically 1 on the interval $[0,\frac14]$.  We may write
\begin{align*}
T_k(x,y) &= \frac{-1}{4\pi^2 i}\int_{\R^3} \eta\Big(\frac{|z-y|}{|x-y|}\Big) \frac{V(z)}{|z-y|}  
\int_{-\infty}^\infty \widecheck{g}_k(t+|z-y|) \mathcal{F}(R_V^+({\scriptstyle (\,\cdot\,)^2}))(t, x, z)\,dt dz \\
 &\ + \frac{-1}{4\pi^2 i}\int_{\R^3}\Big(1 -  \eta\Big(\frac{|z-y|}{|x-y|}\Big)\Big) \frac{V(z)}{|z-y|}  
\int_{-\infty}^\infty \widecheck{g}_k(t+|z-y|) \mathcal{F}(R_V^+({\scriptstyle (\,\cdot\,)^2}))(t, x, z)\,dt dz \\
&= T_{k,1}(x,y) + T_{k,2}(x,y).
\end{align*}

For both $T_{k,1}$ and $T_{k,2}$ the next order of business is to obtain bounds on the integral over $t$.
Using the structure of $\mathcal{F}(R_V^+)$ from~\eqref{eq:goal} and Proposition~\ref{prop:cone}, this
integral splits neatly into the intervals $t < |x-z|$ and $t \geq |x-z|$.
On the right halfline, the pointwise bound $|\widecheck{g_k}(t)| \leq C2^{2k}(1+2^k|t|)^{-s}$ combines with the
integral bound in~\eqref{eq:goal} to yield
$$
\Big| \int_{|x-z|}^\infty \widecheck{g}_k(t+|z-y|)\mathcal{F}(R_V^+({\scriptstyle (\,\cdot\,)^2}))(t, x, z)\,dt \Big|
\les \frac{2^{2k}}{|x-z|(1+2^{k}(|x-z| + |z-y|))^s}.
$$ 
On the left half-line we have
\begin{multline*}
 \int_{-\infty}^{|x-z|} \widecheck{g}_k(t+|z-y|)\mathcal{F}(R_V^+({\scriptstyle (\,\cdot\,)^2}))(t, x, z)\,dt  \\
\begin{aligned}
&= -\pi \sum_{\mu_j} \frac{P_j(x,z)}{\mu_j} \int_{-\infty}^{|x-z|}\widecheck{g}_k(t+|z-y|) e^{\mu_j t}\,dt \\
&= -\pi \sum_{\mu_j} \frac{P_j(x,z)e^{|x-z|\mu_j}}{\mu_j} \int_{-\infty}^\infty \frac{g_k(\lambda)}{\lambda - i\mu_j}
e^{i(|x-z| + |z-y|)\lambda}\,d\lambda.
\end{aligned}
\end{multline*}
As discussed previously, the $H^s$ norm of $(\lambda - i\mu_j)^{-1}g_k$ is controlled by $\mu_j^{-1} \|g_k\|_{H^s}$,
so the integral over $\lambda$ will be bounded by $2^{2k}(1 + 2^{k}(|x-z|+|z-y|))^{-s}$.  Applying the projection
bound from Lemma~\ref{lem:projections} gives us
$$
\Big| \int_{-\infty}^{|x-z|} \widecheck{g}_k(t+|z-y|)\mathcal{F}(R_V^+({\scriptstyle (\,\cdot\,)^2}))(t, x, z)\,dt \Big|
\les \frac{2^{2k}}{|x-z|(1+2^{k}(|x-z| + |z-y|))^s}.
$$ 

This is already sufficient to obtain the desired control of the operators $T_{k,1}$.  Within the support of
$\eta\big(\frac{|z-y|}{|x-y|}\big)$, both $|x-z|$ and $|x-z|+ |z-y|$ are comparable in size to $|x-y|$, so
\begin{align*}
\sum_{k=-\infty}^\infty |T_{k,1}(x,y)| &\les \sum_{k=-\infty}^\infty \int_{\R^3} \eta\Big(\frac{|z-y|}{|x-y|}\Big) 
\frac{2^{2k}|V(z)|}{|z-y|\,|x-y| (1+2^k|x-y|)^s}\,dz \\
&\les \int_{\R^3} \eta\Big(\frac{|z-y|}{|x-y|}\Big) \frac{|V(z)|}{|z-y|\,|x-y|^3}\,dz \\
&\leq \frac{1}{|x-y|^3} \| \chi(|z-x| \sim |x-y|)V(z)\|_{\K_z}.
\end{align*}
The assumption $s>2$ is used to gain convergence of the series for large $k$.  It follows that for any $f \in L^1(\R^3)$,
\begin{equation}
\begin{aligned}
\sum_{k=-\infty}^\infty |T_{k,1}f(x)| &\les \sum_{\ell = -\infty}^\infty \|\chi(|z-x| \sim 2^\ell)V(z)\|_{\K_z}
\int_{|x-y| \sim 2^\ell} \frac{|f(y)|}{2^{3\ell}}\,dy \\
&\les \Big(\sum_{\ell = -\infty}^\infty \|\chi(|z-x| \sim 2^\ell)V(z)\|_{\K_z}\Big) Mf(x),
\end{aligned}
\end{equation}
where $Mf(x)$ is the (centered) Hardy-Littlewood maximal function of $f$.  The sum-total of contributions of $T_{k,1}f(x)$
to $m(\sqrt{H})f(x)$ satisfies a weak-(1,1) bound, as well as $L^p$ bounds for $1<p<\infty$.

The contributions of $T_{k,2}$ are a standard Calder\'on-Zygmund operator.
In the support of $1 - \eta\big(\frac{|z-y|}{|x-y|}\big)$, the size of $|z-y|$ is bounded below by $\frac14|x-y|$.
 We have the pointwise bound
\begin{equation} \label{eq:T_k2}
\begin{aligned}
|T_{k,2}(x,y)| &\les  \int_{\R^3} \Big(1- \eta\Big(\frac{|z-y|}{|x-y|}\Big)\Big) 
\frac{2^{2k}|V(z)|}{|z-y|\,|x-z| (1+2^k(|x-z| +|z-y|)^s}\,dz \\
&\les \int_{|z-y| > \frac14|x-y|} \frac{2^{2k}|V(z)|}{|x-y|\,|x-z|(1+2^k|x-y|)^s}\,dz \\
&\les \frac{2^{2k}}{|x-y|(1+2^k|x-y|)^s}\|V\|_\K.
\end{aligned}
\end{equation}
and consequently
\begin{equation*}
\sum_{k=-\infty}^\infty |T_{k,2}(x,y)| \leq \frac{1}{|x-y|^3} \| V\|_{\K}.
\end{equation*}

Additionally, since the support of $g_k$ lies where $|\lambda| \sim 2^k$,
the $H^s$ norms of $\lambda g_k(\lambda)$ are controlled by $2^k$ times the corresponding norm of $g_k$.
One can repeat the same arguments as before to conclude that
$$
\Big|\int_{-\infty}^\infty \widecheck{g}_k'(t+|z-y|) \mathcal{F}(R_V^+({\scriptstyle (\,\cdot\,)^2}))(t, x, z)\,dt \Big|
\les \frac{2^{3k}}{|x-z|(1+2^{k}(|x-z| + |z-y|))^s}.
$$
Meanwhile, the gradient of $\big(1-\eta\big(\frac{|z-y|}{|x-y|}\big)\big)|z-y|^{-1}$ is supported
where $|z-y| \geq \frac14|x-y|$ and is bounded above by a multiple of $|x-y|^{-2}$.  Applying the Leibniz rule to the
integral defining $T_{k,2}(x,y)$ yields
\begin{align*}
|\nabla_y T_{k,2}(x,y)| &\les \int_{|z-y| > \frac14|x-y|} \frac{2^{2k}|V(z)|}{|x-y|\,|x-z|(1+2^k|x-y|)^s}
\Big(2^k + \frac{1}{|x-y|}\Big)\,dz \\
&\les \frac{2^{2k}}{|x-y|(1+2^k|x-y|)^s}\Big( 2^k + \frac{1}{|x-y|}\Big) \|V\|_\K.
\end{align*}

Thus for two points $y$, $y_0$ with $|y-y_0| < \frac12 |x-y|$ we can conclude that
\begin{equation}
|T_{k,2}(x,y) - T_{k,2}(x,y_0)| \les \frac{2^{2k}\|V\|_\K}{|x-y|(1+2^k|x-y|)^s}\min(2^k |y-y_0|, 1)
\end{equation}
where the second option in the minimum comes from the pointwise bound in~\eqref{eq:T_k2}.
If $2 < s < 3$ then
$$
\sum_{k=-\infty}^\infty |T_{k,2}(x,y) - T_{k,2}(x,y_0)| \les \frac{|y-y_0|^{s-2}}{|x-y|^{s+1}}
$$
and if $s>3$ then $\sum_k |\nabla_y T_{k,2}(x,y)| \les |x-y|^{-4}$.

Now that the kernel of $\sum_k T_{k,2}$ meets the conditions for a Calder\'on-Zygmund operator, 
we must go back and check that it also satisfies an {\it a priori} bound on $L^2(\R^3)$.  In lieu of
a direct proof, we note that
$$
\sum_{k=-\infty}^\infty T_{k,2} = m(\sqrt{H}) - m(\sqrt{-\Delta}) - \sum_{k=-\infty}^\infty T_{k,1}.
$$
The first two operators on the right side are $L^2$-bounded due to the boundedness of $m$,
and the final sum is $L^2$-bounded because it is dominated by the Hardy-Littlewood maximal function.

Therefore $\sum_k T_{k,2}$ is of weak-$(1, 1)$ type and a bounded operator on $L^p$, $1<p< \infty$.
\end{proof}

\section{Negative powers of the Hamiltonian} \lb{sec:Fractional}
In this section we extend Theorem~\ref{thm:multiplier3} to a
class of multipliers that are roughly homogeneous of order $-\alpha \in (-3, 0)$. This result does not follow from Theorem~\ref{thm:multiplier3}, though the proof uses similar techniques.

\begin{theorem}[Generalized fractional integration] \label{thm:multiplier4}
 Assume $V \in \K_0 \cap L^{3/2, \infty}$, and $H = -\Delta + V$ has no eigenvalue or resonance at
zero, and no positive eigenvalues.  Let $\phi$ be a $C^\infty$ function supported on
$[\frac12, 2]$ such that $\{\phi(2^{-k}\lambda)\}_{k\in\mathbb Z}$ forms a partition of unity for
$\R^+$.

Let $\alpha \in (0,3)$ and suppose $m: (0,\infty) \to \mathbb C$ satisfies
\begin{equation} \label{eq:HsAlphaCondition}
\sup_k  \|\phi(\lambda)m(2^k\lambda)\|_{H^s} = M < \infty
\end{equation}
for some  $s> \max(\frac32 - \alpha, 1 - \frac23 \alpha)$.
Then $H^{-\alpha/2}m(\sqrt{H}) := |H|^{-\alpha/2} m(\sqrt{H P_c})$ is an integral operator that is bounded from
$L^p(\R^3)$ to $L^q(\R^3)$ for all pairs with $\frac1p - \frac1q = \frac{\alpha}3$ and $1 < p < \frac{3}{\alpha}$, 
with operator norm less than $C_{\alpha,s,p} M$.

If $V \in \mc K_0$ and~\eqref{eq:HsAlphaCondition} holds for some $s>2-\alpha$, then $H^{-\alpha/2}m(\sqrt{H})$ also has an integral kernel 
$K(x,y)$ that is bounded pointwise by
\begin{equation}
|K(x,y)| \les \frac{M}{|x-y|^{3-\alpha}}.
\end{equation}
\end{theorem}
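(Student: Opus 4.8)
The plan is to regard $H^{-\alpha/2}m(\sqrt H)$ (acting on the continuous spectrum, where it coincides with $n(\sqrt H)$ for $n(\lambda)=\lambda^{-\alpha}\tilde m(\lambda)$, with $\tilde m$ the even extension of $m$) as a single spectral multiplier and rerun the argument of Theorem~\ref{thm:multiplier3}, the one new ingredient being that each dyadic block now gains a factor $2^{-\alpha k}$: since $\lambda^{-\alpha}$ is smooth and comparable to a constant on $[\tfrac12,2]$, one has $\|\phi(\lambda)n(2^k\lambda)\|_{H^s}\les 2^{-\alpha k}M$. Applying the Stone formula and the resolvent identity $R_V^+(\lambda^2)=R_0^+(\lambda^2)-R_V^+(\lambda^2)VR_0^+(\lambda^2)$, I split $n(\sqrt H)=(-\Delta)^{-\alpha/2}m(\sqrt{-\Delta})+\sum_k T_k$. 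The first summand is a negative-order radial Fourier multiplier; the Mihlin--H\"ormander theory, with the orders of smoothing supplied by $(-\Delta)^{-\alpha/2}$ taken into account, gives $(-\Delta)^{-\alpha/2}m(\sqrt{-\Delta})\in\B(L^p,L^q)$ on the stated range once $s>\max(\tfrac32-\alpha,1-\tfrac23\alpha)$, and its integral kernel is $\les|x-y|^{\alpha-3}$ once $s>2-\alpha$.

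For the perturbation term I would repeat the decomposition $T_k=T_{k,1}+T_{k,2}$ from the proof of Theorem~\ref{thm:multiplier3}, writing the kernel of $T_k$ through Parseval as $\int_{\R^3}\int_\R\check g_k(t+|z-y|)\,\mc F(R_V^+({\scriptstyle(\,\cdot\,)^2}))(t,x,z)\tfrac{V(z)}{|z-y|}\,dt\,dz$, where now $g_k(\lambda)=\phi(2^{-k}|\lambda|)\lambda^{1-\alpha}\tilde m(\lambda)$ obeys $|\check g_k(t)|\les 2^{(2-\alpha)k}$ and $\|\check g_k(\cdot)\langle 2^k\,\cdot\,\rangle^s\|_{L^2(\R)}\les 2^{(3/2-\alpha)k}$, that is, the bounds~\eqref{eq:dyadicbounds} improved by the factor $2^{-\alpha k}$. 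The cone-interior piece $T_{k,1}$ is estimated through Lemma~\ref{lem:LpTransfer} and H\"older's inequality on $\R_+$; carrying $2^{-\alpha k}$ through the computation of Theorem~\ref{thm:multiplier3} produces $\sum_k\|T_{k,1}(\,\cdot\,,y)\|_{L^q(\R^3)}\les\int_{\R^3}\frac{|V(z)|}{|z-y|^{4-3/p}}\,dz$ (the exponent is again $4-\tfrac3p$ after one uses $\tfrac3q=\tfrac3p-\alpha$), so that $\sum_k T_{k,1}\in\B(L^{p,1},L^q)$ by Young's inequality in Lorentz spaces together with $V\in L^{3/2,\infty}$; the sum over $k$ converges precisely because $s>\tfrac32-\alpha$. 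The cone-exterior piece $T_{k,2}$ is expanded via Proposition~\ref{prop:cone} into bound-state exponentials and, using Lemma~\ref{lem:projections}, reduced to the same estimates for $\frac{\lambda^{1-\alpha}\tilde m}{\lambda-i\mu_j}$, which obeys the same dyadic bounds up to the harmless factor $\mu_j^{-1}$; the $L^{p,1}\to L^q$ bound then follows exactly as for $T_{k,1}$.

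Combining the three pieces yields $n(\sqrt H)\in\B(L^{p,1},L^q)$ for $p$ in the range where the H\"older step is available, namely $q<2$, i.e.\ $p<\tfrac6{3+2\alpha}$; since the adjoint $\overline{m}(\sqrt H)H^{-\alpha/2}$ is of the same form, duality extends the conclusion into the complementary range, and real interpolation between two admissible pairs upgrades the Lorentz-domain bound to strong $(p,q)$-boundedness throughout the open range $1<p<\tfrac3\alpha$. For the pointwise kernel bound no splitting off of the free part is needed: starting from $K(x,y)=(\pi i)^{-1}\int_\R\mc F^{-1}(\lambda^{1-\alpha}\tilde m)(t)\,\mc F(R_V^+({\scriptstyle(\,\cdot\,)^2}))(t,x,y)\,dt$, the hypothesis $s>2-\alpha$ forces $|\mc F^{-1}(\lambda^{1-\alpha}\tilde m)(t)|\les|t|^{\alpha-2}$ away from the origin (a routine dyadic sum, convergent because $2-\alpha>0$ and $s>2-\alpha$); the region $t\ge|x-y|$ is then controlled by Lemma~\ref{lem:LpTransfer} with $p=\infty$ and $\beta=3-\alpha$, and the region $t<|x-y|$ by direct inspection via Proposition~\ref{prop:cone} and Lemma~\ref{lem:projections}, both contributing $|x-y|^{\alpha-3}$.

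The step I expect to be the main obstacle is the bookkeeping that must replace the $L^2$-boundedness argument of Theorem~\ref{thm:multiplier3}: because $H^{-\alpha/2}$ is genuinely unbounded on $L^2$, there is no universal anchor space, so one has to verify that the ranges reachable directly (through the $T_{k,i}$-estimates), by duality, and by real interpolation together exhaust $(1,\tfrac3\alpha)$, and separately that the negative-order Mihlin bound for $(-\Delta)^{-\alpha/2}m(\sqrt{-\Delta})$ really does hold down to the stated regularity rather than only at the classical $s>\tfrac32$. A secondary difficulty is the low-frequency behaviour of $\lambda^{1-\alpha}$ when $\alpha\ge 2$, where the Stone-formula integral is only conditionally convergent and must be read through the full resolvent difference $R_V^+(\lambda^2)-R_V^-(\lambda^2)$, exactly as for the free operator.
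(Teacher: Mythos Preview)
Your pointwise-kernel argument is essentially the paper's: the paper also reduces to $\sum_k|\check g_k(t)|\les M|t|^{\alpha-2}$ via the oddness of $\lambda\tilde m$, then applies Lemma~\ref{lem:LpTransfer} with $p=\infty$, $\beta=3-\alpha$ on the region $t\ge|x-y|$ and Proposition~\ref{prop:cone} together with Lemma~\ref{lem:projections} on $t<|x-y|$.

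For the $L^p\to L^q$ bounds, however, your perturbative route has a genuine gap when $\alpha\ge\tfrac32$. Carrying the extra $2^{-\alpha k}$ through the computation of Theorem~\ref{thm:multiplier3} gives
\[
\|T_{k,1}(\,\cdot\,,y)\|_{L^q}\ \les\ \int_{\R^3}\frac{2^{(3-\frac3q-\alpha)k}}{\langle 2^k|z-y|\rangle^{s+\frac32-\frac3q}}\,\frac{|V(z)|}{|z-y|}\,dz,
\]
so summing over $k$ requires both $s>\tfrac32-\alpha$ (large $k$) and $3-\tfrac3q-\alpha>0$, i.e.\ $q>\tfrac{3}{3-\alpha}$ (small $k$). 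Since the H\"older step you invoke from Theorem~\ref{thm:multiplier3} also forces $q\le 2$, the admissible window $\big(\tfrac{3}{3-\alpha},\,2\big]$ is empty once $\alpha\ge\tfrac32$. The dual estimate is subject to the symmetric constraint and is likewise empty; thus there is nothing to interpolate between. Your closing paragraph correctly flags the lack of an $L^2$ anchor, but underestimates the damage: for $\alpha\ge\tfrac32$ it is not bookkeeping, it is an empty range.

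The paper avoids this by abandoning the perturbative splitting for the $L^p\to L^q$ claim and instead running complex interpolation on the analytic family $H^{-z/2}m_z(\sqrt H)$ between three endpoints: $\Re z=0$ with $s>\tfrac32$ (Theorem~\ref{thm:multiplier3}), $\Re z=3-\epsilon$ with $s>-1+\epsilon$ (the pointwise kernel bound plus Hardy--Littlewood--Sobolev), and, crucially, $\Re z=\tfrac32$ with $s>0$, for which a separate lemma shows that $K(\,\cdot\,,y)\in L^{2,\infty}(\R^3)$ uniformly in $y$. That middle endpoint is what supplies an anchor across the critical value $\alpha=\tfrac32$ and produces the second branch $s>1-\tfrac23\alpha$ of the threshold; it is the ingredient your argument is missing. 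For $\alpha<\tfrac32$ your approach does go through and is more direct than the paper's, but it does not reach the full theorem.
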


\begin{observation}
Using the trivial choice $m(\lambda) \equiv 1$, this shows that the kernel of $H^{-\alpha/2}P_c$ is bounded by $|x-y|^{\alpha-3}$.  Since each eigenfunction of $H$ is bounded and exponentially decreasing, and there are finitely many of them, the same is true for $H^{-\alpha/2}$, $0< \alpha < 3$. 
\end{observation}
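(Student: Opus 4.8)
The plan is to deduce this observation directly from Theorem~\ref{thm:multiplier4} applied to the trivial multiplier, and then to account separately for the finitely many bound states. First I would take $m(\lambda) \equiv 1$: since $\phi(\lambda)m(2^k\lambda) = \phi(\lambda)$ for every $k$, the quantity $\sup_k\|\phi(\lambda)m(2^k\lambda)\|_{H^s}$ equals $\|\phi\|_{H^s}$, which is finite for every $s \geq 0$, so hypothesis~\eqref{eq:HsAlphaCondition} holds with $M \les 1$ for some (indeed any) $s > 2-\alpha$. Because $m(\sqrt{HP_c})$ is then the identity operator, the pointwise part of Theorem~\ref{thm:multiplier4} — which only asks for $V \in \K_0$ — shows that the integral kernel of $H^{-\alpha/2}P_c$ is bounded by $C\,|x-y|^{\alpha-3}$.

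Next I would add the contributions of the negative eigenvalues $-\mu_j^2 < 0$, $j = 1, \ldots, J$, using $|H|^{-\alpha/2} = H^{-\alpha/2}P_c + \sum_{j=1}^J \mu_j^{-\alpha} P_j$. For each $j$, Lemma~\ref{lem:projections} furnishes both $|P_j(x,y)| \les \mu_j e^{-\mu_j|x-y|}/|x-y|$ and, via the boundedness of the eigenfunctions, the uniform bound $|P_j(x,y)| \les \mu_j^2 e^{-\mu_j|x-y|} \les 1$. Splitting on whether $|x-y| \leq 1$ or $|x-y| \geq 1$: in the first range the uniform bound dominates $|x-y|^{\alpha-3} \geq 1$, and in the second the exponential factor dominates $|x-y|^{\alpha-3}$ up to a constant. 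This gives $|P_j(x,y)| \les |x-y|^{\alpha-3}$ for each of the finitely many $j$; multiplying by the bounded constants $\mu_j^{-\alpha}$, summing, and combining with the kernel bound for $H^{-\alpha/2}P_c$ produces the asserted bound $|K(x,y)| \les |x-y|^{\alpha-3}$.

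The only subtlety — and what passes for the ``hard part'' of an otherwise routine deduction — is matching the diagonal singularity of $|x-y|^{\alpha-3}$ with the available estimates on $P_j$. Since $\alpha - 3 \in (-3,0)$, that singularity is stronger than $|x-y|^{-1}$ exactly when $\alpha < 2$ and weaker when $\alpha > 2$, so neither the $|x-y|^{-1}$ bound from Lemma~\ref{lem:projections} nor mere polynomial decay would suffice for all $\alpha$ on its own. It is the combination of uniform boundedness near the diagonal with exponential decay away from it, both supplied by Lemma~\ref{lem:projections}, that makes the argument go through uniformly for $\alpha \in (0,3)$; beyond organizing this casework there is nothing to overcome.
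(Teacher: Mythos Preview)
Your proposal is correct and matches the paper's approach; the observation is stated as a self-explanatory remark in the paper (with no separate proof), and you have simply filled in the routine details it leaves implicit—applying Theorem~\ref{thm:multiplier4} with $m\equiv 1$ for the $P_c$ part and invoking boundedness plus exponential decay of the finitely many eigenfunctions for the point spectrum part.
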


\begin{observation}
The range of $s$ is nearly sharp.  There are counterexamples of Bochner-Riesz type if $s < \frac32 - \alpha$.  There are counterexamples of the form $m(\lambda) = N^{-s}\phi(\lambda)\sin(N\lambda)$ if $s < 1-\frac23 \alpha$.
\end{observation}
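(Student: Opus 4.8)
The plan is to establish both sharpness claims by direct construction, already at the level $V\equiv 0$ (so that $H=-\Delta$, which satisfies every spectral hypothesis of Theorem~\ref{thm:multiplier4}): I will exhibit multipliers whose dyadic $H^s$ norm in~\eqref{eq:HsAlphaCondition} is finite --- uniformly bounded, in the wave-type case --- yet for which $H^{-\alpha/2}m(\sqrt H)=|\nabla|^{-\alpha}m(|\nabla|)$ fails to map $L^p\to L^q$ for some admissible pair $\frac1p-\frac1q=\frac\alpha3$, $1<p<\frac3\alpha$. In every construction $m$ is supported away from $\lambda=0$, so the factor $|\xi|^{-\alpha}$ is smooth and bounded above and below on the relevant annulus and can be absorbed into a fixed $C^\infty_c$ bump; the task thus reduces to analyzing a model radial Fourier multiplier on $\R^3$ supported in $\{|\xi|\sim 1\}$.

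\emph{Bochner--Riesz family (sharpness of $\tfrac32-\alpha$).} Fix radial $\psi\in C^\infty_c$ with $\psi\equiv 1$ near $\lambda=1$, $\supp\psi\subset(\tfrac12,\tfrac32)$, and set $m(\lambda)=\psi(\lambda)(1-\lambda^2)^{\delta}_+$. A one-variable computation --- the edge singularity $(1-\lambda)^{\delta}_+$ has Fourier transform of size $\sim|\tau|^{-1-\delta}$ --- gives $\|m\|_{H^s}<\infty$ precisely when $\delta>s-\tfrac12$, and since $m$ is a single bump the dyadic supremum in~\eqref{eq:HsAlphaCondition} is comparable to this one norm. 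Modulo a Schwartz-kernel operator, $|\nabla|^{-\alpha}m(|\nabla|)$ is then a localized Bochner--Riesz means of order $\delta$ on $\R^3$, whose radial convolution kernel obeys the classical asymptotic $c\,r^{-2-\delta}e^{ir}+O(r^{-3-\delta})$ as $r=|x|\to\infty$, with $c\ne 0$. Applying the operator to a Schwartz $f$ with $\widehat f\equiv 1$ on $\{|\xi|\le 2\}$ produces a function with $\bigl|[|\nabla|^{-\alpha}m(|\nabla|)f](x)\bigr|\sim|x|^{-2-\delta}$ for large $|x|$, which lies in $L^q(\R^3)$ only if $(2+\delta)q>3$. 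Hence, whenever $\delta<1-\alpha$, one may choose $q\in\bigl(\frac3{3-\alpha},\frac3{2+\delta}\bigr]$ with matching exponent $p\in(1,\frac3\alpha)$, and boundedness fails for that pair. Since $(s-\tfrac12,1-\alpha)$ --- which one can keep inside $(-\tfrac12,1-\alpha)$ when $\alpha<\tfrac32$, so $(1-\lambda^2)^\delta_+$ is a genuine function --- is nonempty exactly for $s<\tfrac32-\alpha$, this gives the first claim for all such $s$; when $\alpha\ge\tfrac32$ one instead appeals to the wave-type family below, whose threshold $1-\tfrac23\alpha$ then dominates.

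\emph{Wave-type family (sharpness of $1-\tfrac23\alpha$).} Fix radial $\phi\in C^\infty_c((\tfrac12,2))$, $\phi\not\equiv 0$, and let $m_N(\lambda)=N^{-s}\phi(\lambda)\sin(N\lambda)$. Because $\widehat{\phi\sin(N\,\cdot\,)}$ concentrates at frequency $\pm N$, one has $\|\phi\sin(N\,\cdot\,)\|_{H^s}\sim N^s$, so the dyadic norm of $m_N$ is $\sim 1$ uniformly in $N$: the family satisfies~\eqref{eq:HsAlphaCondition} with $M=O(1)$. Writing $\sin(N|\xi|)=\frac1{2i}(e^{iN|\xi|}-e^{-iN|\xi|})$ exhibits $|\nabla|^{-\alpha}m_N(|\nabla|)$ as $N^{-s}$ times a frequency-localized half-wave propagator at times $\pm N$; a direct computation of the radial kernel gives $K_N(z)=\frac{N^{-s}}{4\pi^2|z|}\bigl(G(N-|z|)-G(N+|z|)\bigr)$, where $G$ is $\tfrac12$ times the Fourier transform of the even $C^\infty_c$ function $r\mapsto|r|\,\tilde\phi(|r|)$ ($\tilde\phi$ being the bump after absorbing $|\xi|^{-\alpha}$), in particular a fixed, real, \emph{non-oscillatory} Schwartz function on $\R$. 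Consequently $|K_N(z)|\gtrsim N^{-1-s}$ on the shell $\bigl|\,|z|-(N-u_0)\,\bigr|\le\rho$ about any $u_0$ with $G(u_0)\ne 0$. Applying the operator to $f=\mathbf{1}_{B(0,\rho/4)}$ --- small enough that $G(N-|x-y|)$ keeps a constant sign as $y$ ranges over the ball, so no cancellation occurs --- yields $\bigl\||\nabla|^{-\alpha}m_N(|\nabla|)f\bigr\|_{L^q}\gtrsim N^{-1-s}\cdot N^{2/q}=N^{-1-s+2/q}$ while $\|f\|_{L^p}\sim 1$. This blows up as $N\to\infty$ whenever $s<\frac2q-1$, a condition compatible with $q>\frac3{3-\alpha}$ (equivalently $p>1$) exactly for $s<1-\tfrac23\alpha$. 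Choosing an admissible pair in that range contradicts the uniform bound of Theorem~\ref{thm:multiplier4}.

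\emph{Main obstacle.} The only non-formal ingredient is extracting the correct \emph{pointwise lower} bound on each kernel without losing it to oscillation. For the Bochner--Riesz family this is the classical stationary-phase expansion of the Bochner--Riesz kernel (surface measure of the sphere $\Rightarrow$ the phase $e^{ir}$ and a factor $r^{-1}$; the $\delta$-power edge $\Rightarrow$ an additional $r^{-\delta-1}$), the decisive point being that the leading coefficient $c$ is nonzero, so $r^{-2-\delta}$ really is the size of the output and the integrability threshold $(2+\delta)q=3$ is sharp. For the wave-type family the analogue is that the radial profile $G$ is an ordinary, non-oscillatory Schwartz bump, so localizing the input to a ball of radius $\ll 1$ freezes the sign of the kernel across that ball, and the $|t|^{-1}$ dispersive decay of the $3$D wave equation converts directly into the growth $N^{-1-s+2/q}$. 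Both are standard once set up this way, so I expect the remaining write-up to be short.
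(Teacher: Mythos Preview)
The paper states this observation as a remark and does not supply a proof; you have filled in the argument, and it is essentially correct. Your reduction to $V\equiv 0$ is legitimate (the free Laplacian satisfies all hypotheses of Theorem~\ref{thm:multiplier4}), and both families work as claimed: the Bochner--Riesz kernel asymptotic $|x|^{-2-\delta}$ together with the $H^s$ membership criterion $\delta>s-\tfrac12$ forces failure of $L^p\to L^q$ boundedness at some admissible pair exactly when $s<\tfrac32-\alpha$; the wave-type computation correctly identifies the output as concentrated on a shell of radius $\sim N$ and thickness $\sim 1$ with amplitude $\sim N^{-1-s}$, yielding the $N^{-1-s+2/q}$ lower bound that diverges for some admissible $q$ precisely when $s<1-\tfrac23\alpha$.

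Two small points worth tightening in a full write-up. First, in the Bochner--Riesz case you test against a Schwartz $f$ with $\widehat f\equiv 1$ near the unit sphere, so the output is the convolution kernel itself; you should make explicit that the oscillatory factor $e^{i|x|}$ does not help --- the $L^q$ norm on dyadic annuli is governed by the amplitude $|x|^{-2-\delta}$ alone, hence diverges when $(2+\delta)q\le 3$. Second, in the wave-type case the claim that ``no cancellation occurs'' when integrating $K_N(x-y)$ over $y\in B(0,\rho/4)$ relies on $G$ being continuous and nonvanishing at $u_0$, so that $G(N-|x-y|)$ has constant sign for $|x|$ near $N-u_0$ and $|y|<\rho/4$; this is fine, but you should also note that the competing term $G(N+|x-y|)$ is $O(N^{-M})$ for every $M$ since $G$ is Schwartz, so it is genuinely negligible. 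With these clarifications the argument is complete.
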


\begin{observation}
When $\alpha \geq 2$ the multiplier $|\lambda|^{-\alpha/2}m(\sqrt{\lambda})$ is not locally integrable at zero.  In these cases we define $H^{-\alpha/2}m(\sqrt{H})$ as the sum $\sum_{k \in \Z} \phi(2^{-k}\sqrt{H})|H|^{-\alpha/2}m(\sqrt{H})$, which converges in the weak operator topology to a limit
independent of the choice of partition of unity $\phi$.
\end{observation}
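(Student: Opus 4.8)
The plan is to derive the asserted convergence from a single uniform bound on the symmetric partial sums $S_N := \sum_{|k|\le N}\phi(2^{-k}\sqrt H)|H|^{-\alpha/2}m(\sqrt H)$ together with stabilization of $S_N$ on a dense subclass. Write $n(\lambda):=\lambda^{-\alpha}m(\lambda)$ and $n_k(\lambda):=\phi(2^{-k}\lambda)\,n(\lambda)$; each $n_k$ is a bounded function with compact support in $(0,\infty)$, so $T_k:=n_k(\sqrt H)=n_k(\sqrt{HP_c})$ is a legitimate spectral multiplier and, because $n_k$ avoids the origin, it does admit the Stone representation $T_k=(\pi i)^{-1}\int_{-\infty}^\infty\lambda\,\widetilde{n_k}(\lambda)\,R_V^+(\lambda^2)\,d\lambda$. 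Since $\|\phi(\lambda)n_k(2^k\lambda)\|_{H^s}\les 2^{-\alpha k}M$, the proof of Theorem~\ref{thm:multiplier4}, performed with the dyadic decomposition carried out at the outset, applies to each $T_k$ verbatim: the conical part $T_{k,1}$ and the bound-state part $T_{k,2}$ obey exactly the estimates derived there. Summing those estimates over $k$ — the conical contributions sum to a bounded operator $L^p\to L^q$ thanks to $s>\max(\tfrac32-\alpha,\,1-\tfrac23\alpha)$ and a Hardy--Littlewood--Sobolev estimate, and the bound-state contributions assemble into a kernel with Calder\'on--Zygmund-type bounds plus an \emph{a priori} $L^2$ control, precisely as in that proof — yields $\|S_N\|_{L^p(\R^3)\to L^q(\R^3)}\le C_{\alpha,s,p}M$ with a constant independent of $N$ (and uniform over arbitrary finite partial sums, so the choice of exhaustion of $\Z$ is immaterial).

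Next I would fix a dense subclass on which $S_N$ stabilizes. Let $\mc D$ be the linear span of $\{\,\psi(\sqrt H)h:\psi\in C^\infty_c((0,\infty)),\ h\in C^\infty_c(\R^3)\,\}$. Each $\psi(\sqrt H)$ is bounded on $L^p$ by Theorem~\ref{thm:multiplier3}, and a standard duality argument using the strong convergence $\psi_i(\sqrt H)\to P_c$ on $L^2$ along any $\psi_i\uparrow\mathbf 1_{(0,\infty)}$ (valid since $H$ has no zero eigenvalue) shows $\mc D$ is dense in $P_cL^p(\R^3)$, the closed subspace through which $H^{-\alpha/2}m(\sqrt H)$ and every $S_N$ factor. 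For $f=\psi(\sqrt H)h\in\mc D$ the product $n_k\psi$ vanishes for all but finitely many $k$, so $S_N f$ is independent of $N$ once $N$ is large and $\langle S_N f,g\rangle$ converges for every $g\in L^{q'}(\R^3)$. Combining this with the uniform bound on $\|S_N\|_{L^p\to L^q}$ in the usual way — approximate a general $f\in L^p$ by $f'\in\mc D$ and estimate the tail $|\langle(S_N-S_M)(f-f'),g\rangle|$ by $2C_{\alpha,s,p}M\,\|f-f'\|_{L^p}\|g\|_{L^{q'}}$ — shows $\langle S_N f,g\rangle$ is Cauchy for all $f\in L^p$, $g\in L^{q'}$. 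The limiting bilinear form is bounded by $C_{\alpha,s,p}M\|f\|_{L^p}\|g\|_{L^{q'}}$, and since $1<q<\infty$ when $1<p<3/\alpha$, it is represented by a bounded operator $L^p\to L^q$; this is exactly convergence of $\sum_{k\in\Z}T_k$ in the weak operator topology, and we take the resulting operator as the definition of $H^{-\alpha/2}m(\sqrt H)$.

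Finally, independence of $\phi$ follows from the same mechanism: for a second admissible cutoff $\tilde\phi$ the partial sums $\tilde S_N$ are uniformly bounded by the identical estimates, and for $f\in\mc D$ both $S_Nf$ and $\tilde S_Nf$ stabilize to the same vector $n(\sqrt H)f$, which is a well-defined element of $L^2$ because $f$ has spectral support bounded away from $0$ and $\infty$. Hence the two limiting operators agree on the dense set $\mc D$ and therefore coincide on $P_cL^p$. The one point requiring genuine care — and the only real obstacle — is the interface between two functional calculi: on $L^2$ the symbol $n(\lambda)=\lambda^{-\alpha}m(\lambda)$ is unbounded near $\lambda=0$, so $n(\sqrt H)$ is only densely defined there, whereas the robust fact is the uniform $L^p\to L^q$ boundedness of the partial sums. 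The class $\mc D$ is chosen precisely so that the $L^2$ and the $L^p\to L^q$ descriptions of the limit coincide on it, which makes the identification of the limit and its $\phi$-independence unambiguous. Everything else — the kernel bookkeeping behind the uniform bound, the density of $\mc D$, and the approximation argument — is routine and parallels the proofs of Theorems~\ref{thm:multiplier3} and~\ref{thm:multiplier4}.
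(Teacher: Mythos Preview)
The paper states this observation without proof; it functions as a definition, with the implicit claim that the dyadic sum is well-posed left to the reader.  Your overall strategy --- uniform $L^p\to L^q$ bounds on the partial sums together with stabilization on a spectrally-localized dense class, then a standard approximation argument --- is the right way to justify it, and the density and $\phi$-independence portions are clean.

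There is one genuine misstep in how you obtain the uniform bound.  The decomposition into a ``conical part $T_{k,1}$'' and a ``bound-state part $T_{k,2}$'' with Calder\'on--Zygmund structure and an \emph{a priori} $L^2$ control is the architecture of Theorems~\ref{thm:multiplier3} and~\ref{thm:weakL1}, which yield $L^p\to L^p$ bounds, not $L^p\to L^q$.  The proof of Theorem~\ref{thm:multiplier4} does not proceed that way: the $L^p\to L^q$ mapping is obtained by complex interpolation among the three endpoint cases $\Re z=0$, $\Re z=\tfrac32$, and $\Re z=3-\epsilon$, with Lemma~\ref{lem:weakL2} supplying the middle case.  So the mechanism you invoke for the uniform bound does not actually exist in the paper.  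The cleanest fix is to sidestep the internal structure entirely: set $m_N(\lambda):=\sum_{|k|\le N}\phi(2^{-k}\lambda)m(\lambda)$ and observe that $m_N$ satisfies~\eqref{eq:HsAlphaCondition} uniformly in $N$ (only boundedly many $\phi(2^{j-k}\,\cdot\,)$ meet any fixed dyadic window), while $|\lambda|^{-\alpha}m_N(\lambda)$ is bounded with compact support in $(0,\infty)$ so that $S_N$ is unambiguously defined by bounded functional calculus.  Every estimate in the proof of Theorem~\ref{thm:multiplier4} depends on the multiplier only through $M$, so the interpolation argument there gives $\|S_N\|_{L^p\to L^q}\le C_{\alpha,s,p}M$ uniformly.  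With this correction your argument goes through.
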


\begin{proof}
The pointwise bound follows the same argument as in Theorem~\ref{thm:multiplier3}.
For each $k \in \Z$, let $g_k(\lambda) = \phi(2^{-k}\lambda)\lambda |\lambda|^{-\alpha}\tilde{m}(\lambda)$.
By assumption, $\phi(\lambda)\lambda |\lambda|^{-\alpha} \tilde{m}(2^{k}\lambda)$ is a compactly supported
function in $H^s$.  It is also the derivative of a function in $H^{s+1}$ with support in the interval $[-2^{k+2}, 2^{k+2}]$, 
thanks to the fact that $\lambda \tilde{m}$ is an odd function.

Thus the Fourier transform of $\phi(\lambda)\lambda |\lambda|^{-\alpha}\tilde{m}(2^{k}\lambda)$ is bounded by
$|t|(1+|t|)^{-(s+1)}M$, hence the Fourier transform of $g_k(\lambda)$
is bounded by $2^{(3-\alpha)k}|t|(1+2^k|t|)^{-(s+1)}M$.  Assuming that $\alpha < 3$ and $s > 2 - \alpha$, it follows that
\begin{equation} \lb{mhatalpha}
\sum_{k=-\infty}^\infty \big|\big(\phi(2^{-k}\lambda) \lambda |\lambda|^{-\alpha}\tilde{m}\big)\widecheck{\phantom{i}}(t) \big| \les M|t|^{\alpha - 2}.
\end{equation}

From here the argument follows~\eqref{eq:Stonemultiplier} and its discussion.  The kernel of $H^{-\alpha/2}m(\sqrt{H})$ is given by
\begin{equation*}
K(x,y) = (\pi i)^{-1}\int_{-\infty}^\infty \sum_{k=-\infty}^\infty (\phi(2^{-k}\lambda) \lambda |\lambda|^{-\alpha}\tilde{m})\widecheck{\phantom{i}}(t) 
\mathcal F R_V^+({\scriptstyle (\,\cdot\,)^2})(t,x,y)\,dt.
\end{equation*}
Applying Lemma~\ref{lem:LpTransfer} with $p= \infty$ and $\beta = 3-\alpha$ shows that the portion of the integral where
$t > |x-y|$ is bounded by $M|x-y|^{\alpha-3}$.  The integral over $t < |x-y|$ consists of a finite sum of terms, each of which is bounded
(as in~\eqref{eq:K2bound}) by
\begin{equation*}
\frac{1}{|x-y|}
\Big|\int_\R \lambda |\lambda|^{-\alpha} \Big(\frac{\tilde{m}(\lambda)}{\lambda - i\mu_j}\Big)e^{i\lambda|x-y|}\,d\lambda\Big|
\les \frac{M}{|x-y|^{3-\alpha}},
\end{equation*}
with the last inequality taking advantage of~\eqref{mhatalpha} and the fact that $\frac{\tilde{m}(\lambda)}{\lambda - i\mu_j}$
satisfies~\eqref{eq:HsAlphaCondition} for the same range of $s$ as $\tilde{m}$.

The $L^p \to L^q$ estimates are established by complex interpolation of operators. 
The dyadic structure of $m(\lambda)$ is a bit unwieldy here, so we define
$n(\rho) = m(2^{\rho})$ and set
\begin{equation*}
m_z(\lambda) = \Big(1 - \frac{\partial^2}{\partial\rho^2}\Big)^{L(z)}n(\log_2 \lambda),
\end{equation*}
where $L(z)$ is a linear function with real coefficients.
Note that for any given $s$, the $H^s$ norm of $n(\rho)$ on an interval $[k, k+1]$ is comparable to the $H^s$ norm of
$m(2^{k}\lambda)$ on the interval $[1, 2]$, thus the effect of $\frac{\partial}{\partial\rho}$ on $n(\rho)$ is to modulate
the Sobolev space condition in~\eqref{eq:HsAlphaCondition}.  

Interpolation will be performed on the family of operators
$H^{-z/2}m_z(\sqrt{H})$, with the linear function $L(z)$ chosen according to the available endpoint estimates.  Note that the multiplier is modified in two ways by the imaginary part of $z$.  Differentiation to imaginary order in $(1 - \frac{\partial^2}{\partial \rho^2})^{L(z)}$ preserves the $H^s$ norms of $m_z(\lambda)$.  In the operator $H^{-z/2}$, the imaginary part of $z$ corresponds to multiplication of $m_z(\lambda)$ by $|\lambda|^{i( \Im(z))}$.  This
acts boundedly on $H^s$  with an operator norm that grows like $\langle \Im(z)\rangle^{|s|}$ as the imaginary part increases.

When the real part of $z$ is zero and $s > \frac32$, the operators $H^{-z/2}m(\sqrt{H})$ fit within the framework of Theorem~\ref{thm:multiplier3}. They are bounded on every $L^p(\R^3)$, $1 < p < \infty$.

When the real part of $z$ is $3-\epsilon$ and $s > -1 + \epsilon = 2 - \Re(z)$, we have just shown that  $H^{-z/2}m(\sqrt{H})$ has integral kernel bounded by $|x-y|^{-\epsilon}$.  By the Hardy-Littlewood-Sobolev inequality it maps $L^p(\R^3)$ to $L^q(\R^3)$ for pairs $1 < p,q < \infty$ with $\frac{3}{p} - \frac{3}{q} = 3-\epsilon$.

The third case where we can establish an {\it a priori} bound is the real part of $z$ is $\frac32$ and $s>0$.  This does not follow directly from previous estimates, however the basic tools are the same.  The kernel of $H^{-3/4}m(\sqrt{H})$ is given by
\begin{equation*}
K(x,y) = (\pi i)^{-1}\int_{-\infty}^\infty \sum_{k=-\infty}^\infty (\phi(2^{-k}\lambda) \lambda |\lambda|^{-\frac32}\tilde{m})\widecheck{\phantom{i}}(t) 
\mathcal F R_V^+({\scriptstyle (\,\cdot\,)^2})(t,x,y)\,dt.
\end{equation*}
We would like to show that for each $y \in \R^3$, $K(\,\cdot\, , y)$ belongs to the Lorentz space $L^{2,\infty}(\R^3)$ with a uniform bound on its norm.  This is accomplished via the following claim which will be proved afterwards.
\begin{lemma} \label{lem:weakL2}
For every $y \in \R^3$ and $k_0 \in \Z$, there is a decomposition $K(x,y)  = F(x,y) + G(x,y)$ such that
\begin{equation*}
\|F(\, \cdot\, , y)\|_{L^2(|x-y| > 2^{k_0})} \leq C \quad \text{ and } \quad 
\|G(\,\cdot\, , y)\|_{L^\infty(|x-y| > 2^{k_0})} \leq C 2^{-\frac32 k_0}.
\end{equation*}
The constant $C < \infty$ does not depend on $y$ or $k_0$.
\end{lemma}
 Assuming this result for now, it follows that the set $\{x \in \R^3: |K(x,y)| > 2C 2^{-3k_0/2}\}$ is contained in the ball of radius $2^{k_0}$ centered at $y$ together with the set where $|F(x,y)| > C 2^{-3k_0/2}$.  Their union has measure bounded by $2^{3k_0}$.

The Minkowski inequality then shows that $H^{-3/4}m(\sqrt{H})$ maps $L^1(\R^3)$ to $L^{2,\infty}(\R^3)$.  The dual statement is true as well, and the intermediate $L^p \to L^q$ bounds follow by Marcinkiewicz interpolation.

Interpolating between the three cases described above shows that $H^{-\alpha/2}m(\sqrt{H})$ has the same $L^p \to L^q$ mapping properties as the fractional integrals $(-\Delta)^{-\alpha/2}$ whenever $0 < \alpha < 3$ and the pair $(\alpha, s)$ lies above the convex hull of the points $(0, \frac32)$, $(3, -1)$, and $(\frac32, 0)$.
\end{proof}

\begin{proof}[Proof of Lemma~\ref{lem:weakL2}]
Once again we use Lemma~\ref{lem:LpTransfer} to obtain the desired bounds on $F(x,y)$ and $G(x,y)$.  Let $g_k(\lambda) = \phi(2^{-k}\lambda)\lambda |\lambda|^{-3/2}\tilde{m}(\lambda)$.  The two main estimates on $\widecheck{g}_k(t)$ are
\begin{equation*}
|\widecheck{g}_k(t)| \les 2^{k/2} \quad \text{ and } \quad 
\big\|\widecheck{g}_k(\,\cdot\,)\langle 2^{k}\,\cdot\,\rangle^s\big\|_{L^2(\R)} \les 1.
\end{equation*}
It will turn out that the desired decomposition is given by
\begin{align*}
F(x,y) &= (\pi i)^{-1}\int_{-\infty}^\infty \sum_{k=1-k_0}^\infty \widecheck{g}_k(t) 
\mathcal F R_V^+({\scriptstyle (\,\cdot\,)^2})(t,x,y)\,dt, \\
G(x,y) &= (\pi i)^{-1}\int_{-\infty}^\infty \sum_{k=-\infty}^{-k_0}\widecheck{g}_k(t) 
\mathcal F R_V^+({\scriptstyle (\,\cdot\,)^2})(t,x,y)\,dt.
\end{align*}

If $k > -k_0$, then the $L^2(\R)$ estmate for $\widecheck{g}_k$ yields 
$\|\widecheck{g}_k\chi_{|t| > 2^{k_0}}\|_{L^2(\R)} \les 2^{-(k+k_0)s}$. Hence
\begin{equation*}
\Big\|\sum_{k > -k_0} \widecheck{g}_k \chi_{t > 2^{k_0}}\Big\|_{L^2(\R)} \les 1.
\end{equation*}
Lemma~\ref{lem:LpTransfer} then shows that the portion of $F(x,y)$ derived from integrating over $|x-y| \leq t < \infty$ is bounded in $L^2(|x-y| > 2^{k_0})$.  The integral over $t < |x-y|$ once again consists of a finite sum of terms, each of which is bounded by
\begin{equation*}
\frac{1}{|x-y|}
\Big|\int_\R \lambda |\lambda|^{-\alpha} \Big(\frac{\tilde{m}(\lambda)}{\lambda - i\mu_j}\Big)e^{i\lambda|x-y|}\,d\lambda\Big|.
\end{equation*}
We note that $\frac{m}{\lambda - i\mu_j}$ satisfies the bounds in~\eqref{eq:HsAlphaCondition} that $m(\lambda)$ does.
It follows that this part of $F(x,y)$ has $L^2$ norm controlled by
\begin{align*}
\Big\| |x-y|^{-1} \sum_{k> -k_0} \Big(\frac{g_k}{\lambda - i\mu_j}\Big)\widecheck{\phantom{I}}(|x-y|)\Big\|_{L^2(|x-y| > 2^{k_0})}
&= \sqrt{4\pi} \Big\| \sum_{k> -k_0} \Big(\frac{g_k}{\lambda - i\mu_j}\Big)\widecheck{\phantom{I}}(t)\Big\|_{L^2(t > 2^{k_0})} 
\\ &\les 1.
\end{align*}

The calculations for $G(x,y)$ follow the same pattern, except that the $L^\infty$ bound is used for $\widecheck{g}$ instead.  For
$k \leq -k_0$, $|\widecheck{g}_k(t)| \les 2^{k/2}$, which makes $\| \sum_{k \leq -k_0} \widecheck{g}_k \|_{L^\infty(\R)} \les 2^{-k_0/2}$.
Applying Lemma~\ref{lem:LpTransfer} with $p=\infty$, $\beta = 1$ shows that the portion of $G(x,y)$ derived from integrating over $|x-y| \leq t < \infty$ is bounded pointwise by $|x-y|^{-1}2^{-k_0/2}$.  The same bounds on the (inverse) Fourier transform of $\frac{g_k}{\lambda - i\mu_j}$ lead to an identical bound on the remaining part of $G(x,y)$.

It follows that for all $|x-y| > 2^{k_0}$, $|G(x,y)| \les 2^{-3k_0/2}$.
\end{proof}

\section{Proof of Strichartz estimates} \lb{sec:Strichartz}

\subsection{Auxiliary results}
Our proof of Strichartz estimates follows the one given by Beals in \cite{beals} and uses complex interpolation instead of the more usual Paley-Littlewood dyadic decomposition. It depends on the following two ingredients:\\
1. An $L^1$ to $L^\infty$ decay estimate for the perturbed wave evolution\\
2. $L^p$ bounds for $(-\Delta)^{i\sigma}$ and $H^{i\sigma}$, $1<p<\infty$.\\
Notably, it does not use the square function or localization in frequency for $H$.

\begin{lemma} For $s_1, s_2 \geq 0$, $0 \leq s = s_1 + s_2 < 1$,
\be\lb{Delta}
\|(-\Delta)^{-s_1} \cos(t \sqrt H) P_c (-\Delta)^{-s_2} f\|_{L^{\frac 2 {1-s}}} \les |t|^{-s} \|f\|_{L^{\frac 2 {1+s}}}.
\ee
\end{lemma}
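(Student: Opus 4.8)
The plan is to read off~\eqref{Delta} from Stein's complex interpolation theorem, using two edge estimates.  For $s=0$ there is nothing to do: on the range of $P_c$ the operator $H$ is non‑negative, so $\cos(t\sqrt H)P_c$ is a contraction on $L^2$.  Assume $s>0$, set $\theta_j:=s_j/s\in[0,1]$, and for $z$ in the strip $0\le\Re z\le1$ consider the analytic family
$$
T_z:=(-\Delta)^{-\theta_1 z}\,\cos(t\sqrt H)\,P_c\,(-\Delta)^{-\theta_2 z},
$$
so that $T_0=\cos(t\sqrt H)P_c$ and $T_s$ is exactly the operator appearing in~\eqref{Delta}.  On the line $\Re z=0$ each factor $(-\Delta)^{-i\theta_j\sigma}$ is unitary on $L^2$, hence $\|T_{i\sigma}\|_{L^2\to L^2}\le 1$, uniformly in $\sigma$ and $t$.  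If one can show on the opposite edge the dispersive bound
$$
\|T_{1+i\sigma}\|_{L^1\to L^\infty}\ \les\ |t|^{-1}\,(1+|\sigma|)^{N}
$$
for a fixed $N<\infty$, then interpolation at $\Re z=s$ delivers~\eqref{Delta} with $\tfrac1p=\tfrac{1+s}{2}$ and $\tfrac1q=\tfrac{1-s}{2}$ and constant $\les |t|^{-s}$.  Admissibility of $\{T_z\}$ (analyticity of $z\mapsto\langle T_zf,g\rangle$ for simple $f,g$, and sub‑exponential growth of the edge bounds — the latter tied to the $L^p$‑boundedness of imaginary powers of $-\Delta$, the second of the two ingredients quoted above) is routine.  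Thus everything reduces to a pointwise estimate $|T_{1+i\sigma}(x,y)|\les|t|^{-1}(1+|\sigma|)^N$ on the integral kernel.

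Since the two complex powers together carry the full smoothing $(-\Delta)^{-(1+i\sigma)}$, I would first peel off the free part.  By the resolvent identity $R_V^+(\mu^2)=R_0^+(\mu^2)-R_0^+(\mu^2)VR_V^+(\mu^2)$ and the Stone formula (as in~\eqref{eq:stone}),
$$
\cos(t\sqrt H)P_c=\cos(t\sqrt{-\Delta})-(\pi i)^{-1}\!\int_{\R}\mu\cos(t\mu)\,R_0^+(\mu^2)VR_V^+(\mu^2)\,d\mu .
$$
For the free term, $(-\Delta)^{-(1+i\sigma)}\cos(t\sqrt{-\Delta})$ is the radial Fourier multiplier $\rho\mapsto\rho^{-2-2i\sigma}\cos(t\rho)$; carrying out the one–dimensional spherical integral produces a Gamma–function constant $c(\sigma)$ — polynomially bounded once the exponentially large and exponentially small $\Gamma$–factors are seen to cancel — times $|x-y|^{-1}$ times a difference of phases $(|x-y|\pm t)^{2i\sigma}$.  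For $|x-y|>|t|$ this is $\les c(\sigma)|x-y|^{-1}\le c(\sigma)|t|^{-1}$, while for $|x-y|\le|t|$ the phase difference is $O\!\big(\min(1,|\sigma|\log\tfrac{t+|x-y|}{t-|x-y|})\big)$ and, divided by $|x-y|$, is again $\les(1+|\sigma|)|t|^{-1}$.  (At $\sigma=0$ this is just the classical identity $\tfrac{\cos(t\sqrt{-\Delta})}{-\Delta}(x,y)=\tfrac{\chi_{\{|x-y|>|t|\}}}{4\pi|x-y|}$, whose $L^1\!\to\!L^\infty$ norm is $\tfrac1{4\pi|t|}$.)  So the free part already satisfies the required bound.

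For the perturbation term the relevant tools are all in place: Corollary~\ref{cor:resolventkernel} gives $|R_V^\pm(\mu^2)(x,z)|\les|x-z|^{-1}$ uniformly in $\mu$; the estimate~\eqref{eq:goal} from Theorem~\ref{thm:main} gives $\int_\R|\mathcal F R_V^+({\scriptstyle(\,\cdot\,)^2})(\tau,x,z)|\,d\tau\les|x-z|^{-1}$; Proposition~\ref{prop:cone} identifies $\mathcal F R_V^+({\scriptstyle(\,\cdot\,)^2})$ on the region $\{\tau<|x-z|\}$ with the explicit finite sum $-\pi\sum_j\mu_j^{-1}e^{\mu_j\tau}P_j(x,z)$; and Lemma~\ref{lem:projections} supplies the exponential decay $|P_j(x,z)|\les\mu_j e^{-\mu_j|x-z|}/|x-z|$.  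Writing the perturbation kernel as $\int_{\R^3}V(z)$ against a convolution of $R_0^+$ with $\mathcal F R_V^+$, and then applying the two complex powers of $-\Delta$ — which act as fractional integrations with only polynomially growing $\Gamma$–constants — I would split the $z$–integral according to whether $z$ lies inside or outside the light cones centred at $x$ and at $y$.  In the far regions the estimate $|x-z|^{-1}\le|t|^{-1}$ together with the exponential decay of the $P_j$ (which beats the $e^{\mu_j\tau}$ for $\tau<|x-z|$) closes the bound; in the regions inside the light cone, where the free contribution is absent, one exploits that $\mathcal F(R_0^+VR_V^+)$ is supported in the \emph{forward} cone $\{\tau\ge|x-z|\}$, so that a contribution to small $|x-y|$ can only come from $z$ far from both $x$ and $y$ and is then controlled by $\|V\|_{\K}$.  (In the $L^p\to L^q$ form needed for the Strichartz application one uses $\|V\|_{L^{3/2,\infty}}$ at the corresponding step.)

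The step I expect to be the main obstacle is exactly this last one: obtaining the $|t|^{-1}$ decay of the perturbed kernel uniformly inside the light cone $\{|x-y|\lesssim|t|\}$, where nothing is gained from the explicit free part and the decay must be extracted entirely from the forward‑cone support geometry of $\mathcal F R_V^+$ combined with the pointwise resolvent and projection bounds — all while tracking the two intervening complex powers of $-\Delta$ and keeping their $\sigma$–dependence polynomial so that Stein interpolation applies.  A secondary technical point is to check that the various $z$–integrals against $V$ converge with constants depending only on the Kato norm, which is where the hypothesis $V\in\K_0$ (and, where relevant, $V\in L^{3/2,\infty}$) is used.
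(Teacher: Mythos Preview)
Your interpolation framework is the same one the paper uses, and the $L^2$ edge is fine. The genuine gap is at the opposite edge: you propose to establish $\|T_{1+i\sigma}\|_{L^1\to L^\infty}\les |t|^{-1}(1+|\sigma|)^N$ by computing the kernel from scratch, and you acknowledge that the perturbation piece inside the light cone is the ``main obstacle'' you have not resolved. That obstacle is real. Extracting $|t|^{-1}$ decay from the forward-cone geometry of $\mathcal F R_V^+$, while simultaneously threading two fractional powers of $-\Delta$ through the $z$-integral against $V$, is essentially the content of the dispersive estimate proved in~\cite{becgol}; your sketch does not supply the mechanism that produces the decay, and the splitting you describe does not by itself do so.

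The point you are missing is that this endpoint is already available as a black box. The paper's proof quotes from~\cite{becgol} the bound $\|\cos(t\sqrt H)P_c(-\Delta)^{-1}\|_{L^1\to L^\infty}\les|t|^{-1}$ (and its adjoint with $(-\Delta)^{-1}$ on the left), then handles the imaginary powers not by kernel computation but by using that $(-\Delta)^{i\sigma}$ is bounded on $\mathcal H^1$ and on $\BMO$ with polynomially growing norm. This converts the $L^1\to L^\infty$ endpoint into three $\mathcal H^1\to\BMO$ estimates (with the real $(-\Delta)^{-1}$ sitting on the left, on the right, or absent), and complex interpolation against the $L^2\to L^2$ bound then gives~\eqref{Delta} directly. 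Note that your route, placing the imaginary powers inside an $L^1\to L^\infty$ statement, forces you into a pointwise kernel analysis precisely because $(-\Delta)^{i\sigma}$ is \emph{not} bounded on $L^1$ or $L^\infty$; passing to $\mathcal H^1$ and $\BMO$ sidesteps this entirely.
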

The endpoint $s=1$ estimates also hold, after replacing $L^1$ by $\mc H^1$ and $L^\infty$ by $\BMO$.
\begin{proof} Start from the fact, proved in \cite{becgol}, that
$$
\|\cos(t \sqrt H) P_c f\|_{L^\infty} \les |t|^{-1} \|\Delta f\|_{L^1},
$$
which we rewrite as
\be\lb{cosh}
\|\cos(t\sqrt H) P_c (-\Delta)^{-1} f\|_{L^\infty} \les |t|^{-1} \|f\|_{L^1}.
\ee
By taking the adjoint, we also see that
$$
\|(-\Delta)^{-1} \cos(t\sqrt H) P_c f\|_{L^\infty} \les |t|^{-1} \|f\|_{L^1}.
$$
Since $(-\Delta)^{i\sigma}$ is bounded on the Hardy space $\mc H^1$ and on $\BMO$ with norms of appropriate growth, we can use complex interpolation between
$$
\|(-\Delta)^{i\sigma_1} \cos(t\sqrt H) P_c {(-\Delta)^{-1+i\sigma_2}} f\|_{\BMO} \les |t|^{-1} \|f\|_{\mc H^1},
$$
$$
\|{(-\Delta)^{-1+i\sigma_1}} \cos(t\sqrt H) P_c (-\Delta)^{i\sigma_2} f\|_{BMO} \les |t|^{-1} \|f\|_{\mc H^1},
$$
and
$$
\|(-\Delta)^{i\sigma_1} \cos(t\sqrt H) P_c (-\Delta)^{i\sigma_2}\|_{L^2} \les \|f\|_{L^2}
$$
to prove that (\ref{Delta}) holds.
\end{proof}

The next step is replacing $(-\Delta)^{-s}$ by $H^{-s} P_c$ in this inequality. We can do it if $(-\Delta)^sP_c H^{-s}$ is a bounded operator on $L^{\frac{2}{1+s}}(\R^3)$.  This property was established in~\cite{hong}. 
\begin{lemma}[{\cite[Lemma 1.4]{hong}}] \lb{intertwine} Suppose $V \in\K_0 \cap L^{3/2,\infty}(\R^3)$ and $H = -\Delta+ V$ has no eigenvalue or resonance at zero, and no positive eigenvalues. For every $0 \leq s \leq 1$, 
\begin{enumerate}
\item Operators  $P_c H^s (-\Delta)^{-s},\;(-\Delta)^s H^{-s} P_c$ belong to $\B(L^p)$ over the range $1<p<\frac 3 {2s}$.
\item Operators $(-\Delta)^{-s} H^s P_c,\;P_c H^{-s} (-\Delta)^s$ belong to $\B(L^p)$ in the range $\frac 3 {3-2s}<p<\infty$.
\end{enumerate}

Moreover, $P_c H (-\Delta)^{-1},\;\Delta H^{-1} P_c \in \B(L^1)$ and $(-\Delta)^{-1} H P_c,\;P_c H^{-1} \Delta \in \B(L^\infty)$.
\end{lemma}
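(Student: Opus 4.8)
The lemma is quoted from \cite{hong}, but it can also be derived from the results above, and the plan would run as follows. First I would reduce everything to item~(1): since $(-\Delta)^s$, $H^sP_c$ and $P_c$ are self-adjoint, the two operators in item~(2) are the Banach-space adjoints of the two in item~(1), and $\tfrac{3}{3-2s} = (\tfrac{3}{2s})'$, so item~(2) follows by duality. Inside item~(1) the case $s=0$ is just the $L^p$-boundedness of $P_c = I - \sum_{j=1}^J P_j$ for $1<p<\infty$ (a finite-rank correction to the identity whose kernel is built from bounded, exponentially decaying eigenfunctions), so I would take $0 < s \le 1$ and prove $(-\Delta)^sH^{-s}P_c\in\B(L^p)$; the companion operator $P_cH^s(-\Delta)^{-s}$ is entirely parallel.

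Next I would dispose of the endpoint $s=1$, which also supplies the boundary data for the interpolation. Writing $-\Delta = H - V$ gives $\Delta H^{-1}P_c = -P_c + VH^{-1}P_c$ and $P_cH(-\Delta)^{-1} = P_c + P_cV(-\Delta)^{-1}$. The kernel bound $|H^{-1}P_c(x,y)|\les|x-y|^{-1}$ (the $\alpha=2$, $m\equiv1$ case of Theorem~\ref{thm:multiplier4}) shows that $VH^{-1}P_c$ has kernel dominated by $|V(x)|\,|x-y|^{-1}$, so the Kato identity $\sup_y\int|V(x)|\,|x-y|^{-1}\dd x = \|V\|_\K$ puts it in $\B(L^1)$, hence $\Delta H^{-1}P_c\in\B(L^1)$ and dually $P_cH^{-1}\Delta\in\B(L^\infty)$; the same argument together with $\int|P_c(x,z)|\dd x\les1$ handles $P_cH(-\Delta)^{-1}\in\B(L^1)$ and $(-\Delta)^{-1}HP_c\in\B(L^\infty)$. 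To get the $\B(L^p)$ bound on $1<p<\tfrac32$ that the interpolation will need, I would dominate $H^{-1}P_c$ pointwise by $(-\Delta)^{-1}$, apply Hardy--Littlewood--Sobolev in Lorentz spaces to obtain $H^{-1}P_c:L^p\to L^{q,p}$ with $\tfrac3p-\tfrac3q=2$, then multiply by $V$ via Hölder for Lorentz spaces, $L^{3/2,\infty}\cdot L^{q,p}\hookrightarrow L^p$ (since $\tfrac1p=\tfrac23+\tfrac1q$); the term $P_cV(-\Delta)^{-1}$ is identical. This is the only place the hypothesis $V\in L^{3/2,\infty}$ enters.

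For $0<s<1$ I would use Stein complex interpolation on the analytic family $T_z = (-\Delta)^zH^{-z}P_c$ in the strip $0\le\Re z\le1$, with $H^{-z}P_c$ defined through the spectral calculus of $HP_c\ge0$ and the usual $e^{\varepsilon z^2}$ regularization to guarantee admissibility on Schwartz functions. On $\Re z=0$, commuting $P_c$ past functions of $H$ gives $T_{i\tau} = (-\Delta)^{i\tau}(HP_c)^{-i\tau}$, bounded on every $L^{p_0}$, $1<p_0<\infty$, with norm $\les\langle\tau\rangle^N$: the factor $(-\Delta)^{i\tau}$ by the classical Mihlin--H\"ormander theorem, and $(HP_c)^{-i\tau}=m(\sqrt{HP_c})$ with $m(\lambda)=\lambda^{-2i\tau}$ by Theorem~\ref{thm:multiplier3}, whose hypothesis~\eqref{eq:HsCondition} holds with $M_s\les\langle\tau\rangle^s$. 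On $\Re z=1$, commuting the imaginary powers and $P_c$ past $H$ and $-\Delta$ yields $T_{1+i\tau} = -(-\Delta)^{i\tau}(\Delta H^{-1}P_c)(HP_c)^{-i\tau}$, bounded on $L^{p_1}$ for $1<p_1<\tfrac32$ with norm $\les\langle\tau\rangle^N$ by the previous paragraph. Interpolating at $\theta=s$ gives $(-\Delta)^sH^{-s}P_c\in\B(L^p)$ with $\tfrac1p=\tfrac{1-s}{p_0}+\tfrac{s}{p_1}$; as $p_0$ ranges over $(1,\infty)$ and $p_1$ over $(1,\tfrac32)$, $\tfrac1p$ sweeps out exactly the interval $(\tfrac{2s}{3},1)$, i.e. $1<p<\tfrac{3}{2s}$. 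The operator $P_cH^s(-\Delta)^{-s}$ is handled by the identical argument applied to $S_z=(HP_c)^z(-\Delta)^{-z}$, whose boundary operators are $(HP_c)^{i\tau}(-\Delta)^{-i\tau}$ on $\Re z=0$ and $(HP_c)^{i\tau}(P_cH(-\Delta)^{-1})(-\Delta)^{-i\tau}$ on $\Re z=1$.

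The main obstacle is the $\Re z=1$ boundary estimate, namely upgrading the Kato-class bounds for $\Delta H^{-1}P_c$ and $P_cH(-\Delta)^{-1}$ from $\B(L^1)\cap\B(L^\infty)$ to genuine $\B(L^p)$ bounds on the open range $1<p<\tfrac32$, which forces the use of $V\in L^{3/2,\infty}$ and of the sharp Lorentz-space forms of Sobolev embedding and Hölder's inequality. Everything else is bookkeeping: the commutation relations among $P_c$, $H^z$ and $(-\Delta)^z$, the admissibility of the two analytic families, and the fact that the polynomial growth in $\Im z$ supplied by the Mihlin theorem and by the classical imaginary-power estimates is amply small enough for Stein interpolation.
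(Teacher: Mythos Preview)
Your proposal is correct and follows exactly the route the paper indicates: the paper does not give a detailed proof but cites \cite{hong} and remarks that ``the proof of this lemma is based on Mihlin multiplier bounds, both for $-\Delta$ and for $H$, and complex interpolation,'' which is precisely your plan---Theorem~\ref{thm:multiplier3} and the classical Mihlin theorem on $\Re z=0$, the $s=1$ identity $\Delta H^{-1}P_c=-P_c+VH^{-1}P_c$ together with the Lorentz-H\"older step (using $V\in L^{3/2,\infty}$) on $\Re z=1$, and Stein interpolation in between. Your identification of the $s=1$ boundary as the sole place where $V\in L^{3/2,\infty}$ is needed also matches the paper's subsequent remark.
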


The proof of this lemma is based on Mihlin multiplier bounds, both for $-\Delta$ and for $H$, and complex interpolation.

\begin{observation}
The assumption $V \in L^{3/2,\infty}$ is essential for establishing the $s=1$ cases of Lemma~\ref{intertwine}.  For general $V \in \K_0$, we conjecture that the optimal range of $L^p$ bounds for $(-\Delta)^s H^{-s}P_c$ is
$1 < p < \frac{1}{s}$ with equality sometimes permitted.  This still includes the relevant exponent $p = \frac{2}{1+s}$. The case $s=\frac12$, $p=2$ is proved in~\cite{goldberg}.
\end{observation}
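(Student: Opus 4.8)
This remark collects three assertions: (i) the hypothesis $V\in L^{3/2,\infty}$ in Lemma~\ref{intertwine} cannot be relaxed to $V\in\K_0$ at the endpoint $s=1$; (ii) a conjecture that, for an arbitrary $V\in\K_0$, the sharp $L^p$ range for $(-\Delta)^sH^{-s}P_c$ is $1<p<\tfrac1s$, with equality admissible in favorable cases; and (iii) the already-known instance $s=\tfrac12$, $p=2$ from~\cite{goldberg}. Only (i) and (iii) would be proved outright; (ii) would be supported by the matching upper and lower bounds sketched below.

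Assertion (iii) is the quickest. By the spectral theorem $(-\Delta)^{1/2}H^{-1/2}P_c$ is bounded on $L^2(\R^3)$ if and only if $\|\nabla P_cf\|_{L^2}^2\les\|H^{1/2}P_cf\|_{L^2}^2=\|\nabla P_cf\|_{L^2}^2+\langle VP_cf,P_cf\rangle$, i.e.\ if and only if a Hardy-type lower bound $\langle Vg,g\rangle\ge-(1-\varepsilon)\|\nabla g\|_{L^2}^2$ holds on the continuous subspace $P_cL^2(\R^3)$. Under the present hypotheses ($V\in\K_0\cap L^{3/2,\infty}$, no zero-energy obstruction, no positive eigenvalues) this is exactly what is established in~\cite{goldberg}; this both proves (iii) and shows that the endpoint $p=\tfrac1s=2$ is attained.

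For (i) the plan is to reduce the $s=1$ statement to a multiplication estimate and then break that estimate. Since $(-\Delta)H^{-1}P_c=P_c-VH^{-1}P_c=P_c-V\,R_V(0)P_c$, and $P_c=I-\sum_jP_j$ is bounded on every $L^p(\R^3)$, $1<p<\infty$ (each $P_j$ has an $L^1\cap L^\infty$ kernel, by Lemma~\ref{lem:projections}), the $s=1$ case of Lemma~\ref{intertwine}(1) is equivalent to $L^p$-boundedness of $V\,R_V(0)P_c$. The pointwise resolvent bound of Corollary~\ref{cor:resolventkernel} gives $|(R_V(0)P_c)(x,y)|\les|x-y|^{-1}$, and the resolvent identity $R_V(0)=R_0(0)-R_0(0)V\,R_V(0)$ shows that $V\,R_V(0)P_c$ agrees with $V(-\Delta)^{-1}$ up to operators carrying extra factors of $V$ and of the exponentially decaying bound-state kernels; granting (as one can after a smallness normalization of $\|V\|_\K$) that these corrections are harmless on $L^p$, the problem reduces to whether multiplication by $V$ maps $\dot W^{2,p}(\R^3)$, the range of $(-\Delta)^{-1}$ on $L^p$, into $L^p(\R^3)$. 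It therefore suffices to produce one $V\in\K_0$ — we take it nonnegative, so that $H=-\Delta+V\ge0$, $P_c=I$, and after scaling $\|V\|_\K$ small the spectral hypotheses hold automatically — for which this multiplication fails for every $p>1$. The natural candidate is a potential spread across infinitely many unit balls receding to infinity with slowly (e.g.\ logarithmically) decaying heights: such a $V$ lies in $\K_0$ because its global Kato norm only sees the $|x-z|^{-1}$-weighted total mass, which stays finite, whereas $V\notin L^{3/2,\infty}$ because $\{|V|>\lambda\}$ has infinite measure for small $\lambda$. The obstruction that drives the failure is already visible in the proof of Theorem~\ref{thm:multiplier3}: there the bad piece $\sum_kT_{k,1}$ was handled only through the fractional moments $\int_{\R^3}V(z)|z-y|^{-(4-3/p)}\,dz$ and the Lorentz convolution $L^{3/2,\infty}\ast L^{3p/(4p-3),\infty}\to L^{p',\infty}$, and the Kato norm, which controls $\int_{\R^3}V(z)|z-y|^{-1}\,dz$, does not control these higher moments. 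The main work, and the delicate point, is to verify both that the candidate $V$ really lies in $\K_0$ and that no cancellation in the $V$-corrections accidentally restores $L^p$-boundedness.

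Finally, (ii) rests on the same exponent bookkeeping. When only $V\in\K_0$ is assumed, one knows exactly that $R_0(0)|V|\in L^\infty$, so $V$ is ``no better than the inverse Laplacian evaluated into $L^\infty$''; inserting this into the Born expansion $(-\Delta)^sH^{-s}P_c-I\sim\int_0^\infty t^{-s}\,(-\Delta)^s(-\Delta+t)^{-1}\,V\,(-\Delta+t)^{-1}\,dt$ and tracking homogeneities, the middle factor can be absorbed only over the $L^p$ range in which $|x|^{-1}$-type averaging of $V$ suffices, which converts the Sobolev threshold $p<\tfrac{3}{2s}$ (valid when $V\in L^{3/2,\infty}$) into $p<\tfrac{1}{s}$. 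Sharpness in the other direction would follow by composing the $\K_0$ potential above against the two multiplier families already known to be extremal for the companion fractional-integration estimate — a Bochner--Riesz symbol and the oscillatory family $m(\lambda)=N^{-s}\phi(\lambda)\sin(N\lambda)$ — pinning the range to $1<p<\tfrac1s$ and leaving $p=\tfrac1s$ available only in self-adjoint situations such as $s=\tfrac12$, $p=2$.
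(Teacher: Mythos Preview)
The paper does not supply a proof for this remark; it is stated as an observation containing one assertion, one explicit conjecture, and one citation. So there is no ``paper's own proof'' to compare against, and your proposal should be read as an attempt to substantiate a passage the authors chose not to argue in detail.

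Your treatment of (ii) and (iii) is appropriate in spirit. For (iii), note however that you invoke \cite{goldberg} ``under the present hypotheses ($V\in\K_0\cap L^{3/2,\infty}$, \ldots)''; this undercuts the point of the citation. The relevance of \cite{goldberg} here is precisely that the $s=\tfrac12$, $p=2$ equivalence holds with only $V\in\K_0$ (indeed for measure-valued potentials), which is what lends evidence to the conjectured range $1<p<\tfrac1s$ with equality sometimes attainable.

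For (i) your sketch has real gaps, some of which you flag yourself. First, the reduction of $(-\Delta)H^{-1}P_c=P_c-V R_V(0)P_c$ to the model operator $V(-\Delta)^{-1}$ is not innocuous: the ``corrections'' carry an extra factor of $V$ but also an extra resolvent, so they are of the same homogeneity as the principal term, and there is no a~priori reason they should be better behaved on $L^p$ for a generic $V\in\K_0$. Asserting they are ``harmless after a smallness normalization'' is exactly the step that needs proof. Second, you never verify that your candidate potential (nonnegative bumps on unit balls receding to infinity with logarithmically decaying heights) actually makes $V(-\Delta)^{-1}$, let alone $V R_V(0)P_c$, unbounded on some $L^p$ with $1<p<\tfrac32$; a naive test with $f$ supported on a single ball gives a bounded ratio, so the failure, if it occurs, must come from a more global choice of test function, and this is not addressed. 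Third, several of the endpoint operators listed in Lemma~\ref{intertwine} at $s=1$ (e.g.\ $P_cH(-\Delta)^{-1}\in\B(L^1)$ and $(-\Delta)^{-1}HP_c\in\B(L^\infty)$) are in fact bounded for every $V\in\K$ by a one-line Schur test, so your argument would need to target the remaining ones specifically. In short, the heuristic is reasonable, but as written it is a plan rather than a proof, which is consistent with the paper's own decision to leave the remark unproved.
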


In particular, for $s<3/4$ all these intertwining operators are $L^2$ bounded. Thus, one can use powers of $H$ in order to characterize the Sobolev spaces $P_c \dot H^s$, $-3/2 < s < 3/2$ (and the negative energy component is trivial under our assumptions, since $P_c \in \B(L^1) \cap \B(L^\infty)$):
\begin{lemma} Suppose $V \in \K_0 \cap L^{3/2,\infty}(\R^3)$ and $H = -\Delta+ V$ has no eigenvalue or resonance at zero, and no positive eigenvalues. For $-3/2<s<3/2$,
\be\lb{new_sobolev}
\dot H^s = \{f \in \mc S': (-\Delta)^{s/2} f \in L^2\} = \{f \in \mc S': H^{s/2} f \in L^2\}.
\ee
More generally, the same is true for $\dot W^{s, p}$ if $1<p<3/s$ and $0\leq s \leq 2$ or if $\frac 3 {3-s}<p<\infty$ and $-2 \leq s \leq 0$ --- hence also for Besov spaces in this range.
\end{lemma}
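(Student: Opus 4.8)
The plan is to derive both identities from the boundedness of the intertwining operators in Lemma~\ref{intertwine}, after first removing the finite-dimensional bound-state contribution. Let $P_b := I - P_c = \sum_{j} P_j$ be the spectral projection onto $\bigoplus_j \ker(H+\mu_j^2)$. Since $P_c \in \B(L^1)\cap\B(L^\infty)$, interpolation gives $P_b \in \B(L^p)$ for every $1\le p\le\infty$, and the range of $P_b$ is the fixed finite-dimensional span of the eigenfunctions $f_j$. Each $f_j$ is bounded, decays exponentially, and satisfies $\Delta f_j = -(V+\mu_j^2)f_j$ with $V \in \K_0\cap L^{3/2,\infty}$; from this one checks that $f_j$ lies in every space $\dot W^{\sigma,q}(\R^3)$ appearing in the statement, as well as in each of their duals. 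Consequently, for any $f$ in the relevant range, $P_b f$ is a finite linear combination of the $f_j$, so $P_b f$ automatically belongs to $\dot W^{s,p}$ and satisfies $H^{s/2}P_b f = \sum_j c_j \mu_j^{s} f_j \in L^p$ (with the convention $H^{s/2}=|H|^{s/2}$ on the bound states). Hence $f$ lies in $\dot W^{s,p}$, resp.\ satisfies $H^{s/2}f\in L^p$, if and only if $P_c f$ does; this reduces both identities to the continuous-spectral subspace.

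On that subspace the first identity $\dot H^s=\{g : (-\Delta)^{s/2}g\in L^2\}$ is just the standard Riesz-potential realization of the homogeneous Sobolev space, which for $|s|<\tfrac32$ in dimension three is a genuine subspace of $\mc S'$ with no quotient ambiguity. For the second identity, for $g$ with $P_c g = g$ and $s\ge0$ I would write the pair of operator identities
\[
H^{s/2}g=\bigl[H^{s/2}P_c(-\Delta)^{-s/2}\bigr]\,(-\Delta)^{s/2}g,\qquad
(-\Delta)^{s/2}g=\bigl[(-\Delta)^{s/2}H^{-s/2}P_c\bigr]\,H^{s/2}g,
\]
so that $\|(-\Delta)^{s/2}g\|_{L^2}\sim\|H^{s/2}g\|_{L^2}$ as soon as both bracketed operators are bounded on $L^2(\R^3)$. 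These are precisely the operators of Lemma~\ref{intertwine}(1) with parameter $s/2\in[0,\tfrac34)$, and the constraint $p=2<3/(2\cdot\tfrac{s}{2})=3/s$ is equivalent to $s<\tfrac32$, which holds; for $s<0$ one runs the same identities with $|s|/2$ in place of $s/2$ and uses Lemma~\ref{intertwine}(2) instead, whose range $\tfrac{3}{3-|s|}<2<\infty$ is again equivalent to $|s|<\tfrac32$. Combined with the bound-state reduction, this proves the $\dot H^s$ statement.

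The $\dot W^{s,p}$ statement is identical word for word with $L^p$ in place of $L^2$: for $0\le s\le2$, Lemma~\ref{intertwine}(1) with parameter $s/2\in[0,1]$ gives $H^{s/2}P_c(-\Delta)^{-s/2},\ (-\Delta)^{s/2}H^{-s/2}P_c\in\B(L^p)$ precisely in the range $1<p<3/s$, and for $-2\le s\le0$, Lemma~\ref{intertwine}(2) with parameter $|s|/2\in[0,1]$ gives the corresponding operators bounded precisely in the (dual) range $\tfrac{3}{3-|s|}<p<\infty$ recorded in the statement; the borderline cases $|s|=2$ additionally invoke the $\B(L^1)$/$\B(L^\infty)$ endpoints of Lemma~\ref{intertwine}. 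Finally, since the homogeneous Besov spaces $\dot B^{s}_{p,q}(\R^3)$ in this range are obtained by real interpolation between two spaces $\dot W^{s_0,p},\dot W^{s_1,p}$ (or two spaces $\dot H^{s_0},\dot H^{s_1}$), and the intertwining operators $(-\Delta)^{\pm s/2}H^{\mp s/2}P_c$, $H^{\pm s/2}P_c(-\Delta)^{\mp s/2}$ are bounded on $L^p$ at both endpoints of such an interpolation, they are bounded on the interpolation spaces as well; together with the (interpolation-stable) bound-state reduction this yields the Besov characterization.

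The analytic content is entirely in Lemma~\ref{intertwine}; the closest thing to an obstacle is the bookkeeping around the bound states — checking that $P_b$ is bounded on all the spaces in play and that the eigenfunctions, and their duals, lie in each of them, which is what upgrades the norm equivalences to the set-theoretic identities claimed — together with the need to line up the exponent ranges of Lemma~\ref{intertwine} (parameter $s/2$ or $|s|/2$) with those in the present statement, including the endpoints $|s|=2$. Neither of these is serious.
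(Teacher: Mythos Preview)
Your argument is correct and is essentially the same as the paper's: the paper does not give a formal proof of this lemma, but precedes it with the one-line remark that for parameter $<3/4$ the intertwining operators of Lemma~\ref{intertwine} are $L^2$-bounded, and that the negative-energy component is trivial because $P_c\in\B(L^1)\cap\B(L^\infty)$. Your write-up simply fleshes out those two observations (the bound-state reduction and the exponent bookkeeping) and adds the routine interpolation step for Besov spaces; the only superfluous remark is the appeal to the $\B(L^1)/\B(L^\infty)$ endpoints at $|s|=2$, which is not actually needed since the stated $p$-ranges are open.
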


\subsection{Strichartz estimates} Finally, we are in a position to prove the full range of Strichartz estimates for the wave equation in three dimensions.


For the free wave equation in $\R^{d+1}$
$$
u_{tt}-\Delta u = F,\ u(0)=u_0,\ u_t(0)=u_1,
$$
as proved by \cite{keeltao}, the following Strichartz estimates hold:
\be\lb{strichartz_est}
\|u\|_{L^\infty_t \dot H^s_x \cap L^p_t L^q_x} + \|u_t\|_{L^\infty_t \dot H^{s-1}_x} \les \|u_0\|_{\dot H^s} + \|u_1\|_{\dot H^{s-1}} + \|F\|_{L^{\tilde p'}_t L^{\tilde q'}_x},
\ee
where by scaling and translation invariance
\be\lb{condition1}
\frac 1 p + \frac d q = \frac d 2 - s = \frac 1 {\tilde p'} + \frac d {\tilde q'} - 2,\ 2 \leq p, q, \tilde p, \tilde q \leq \infty,
\ee
and the exponents must be wave-admissible:
\be\lb{condition2}
\frac 2 p + \frac {d-1} q \leq \frac {d-1} 2,\ \frac 2 {\tilde p} + \frac {d-1} {\tilde q} \leq \frac {d-1} 2.
\ee
The endpoint cases $(p, q) = (2, \infty)$ in $\R^{3+1}$ and $(p, q) = (4, \infty)$ in $\R^{2+1}$ are not true (and same for the inhomogeneous term, i.e.\ $(\tilde p, \tilde q) = (2, \infty)$ in $\R^3$).

In three dimensions, for wave-admissible exponents $(p, q)$, the pairs $(\frac 1 p, \frac 1 q)$ cover the triangle with vertices $(0, 0)$, $(0, \frac 1 2)$, and $(\frac 1 2, 0)$.

Thus, in $\R^{3+1}$, the homogeneous Strichartz estimates (i.e.\;$F=0$)
\be\lb{hom_str}
\|u\|_{L^p_t L^q_x} \les \|u_0\|_{\dot H^s} + \|u_1\|_{\dot H^{s-1}}
\ee
follow by interpolation between the segment $(\infty, \frac 6 {3-2s})$, $0 \leq s < 3/2$ --- trivial by $\dot H^s$ norm conservation --- and the sharp admissible segment $\frac 2 p + \frac {d-1} q = \frac {d-1} 2$, i.e.
\be\lb{sharp_str}
\frac 1 p + \frac 1 q = \frac 1 2.
\ee


Likewise, all the inhomogeneous estimates follow from $\dot H^s$ norm conservation --- along the line $(\tilde p', \tilde q')=(1,\frac 6 {3+2s})$, $0 \leq s < 3/2$ ---, plus the dual sharp admissible Strichartz inequalities
\be\lb{sharp_dual}
\|u\|_{L^\infty_t \dot H^s_x} \les \|F\|_{L^{\tilde p'}_t L^{\tilde q'}_x},\ \frac 1 {\tilde p} + \frac 1 {\tilde q} = \frac 1 2.
\ee
In turn, the latter are obtained by duality from the homogeneous sharp admissible Strichartz estimates (\ref{hom_str}-\ref{sharp_str}).

\begin{theorem}[Strichartz estimates] Provided that $V \in \K_0 \cap L^{3/2,\infty}(\R^3)$ and $H=-\Delta+V$ has no threshold bound states or embedded eigenvalues, the Strichartz estimates (\ref{strichartz_est}-\ref{condition2}) hold in $\R^{3+1}$ for the continuous spectrum projection $P_c$ of the solution $u$ of the equation
\be\lb{wave_V}
u_{tt} -\Delta u + V u = F,\ u(0)=u_0,\ u_t(0)=u_1.
\ee
\end{theorem}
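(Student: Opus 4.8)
The plan is to follow Beals' argument \cite{beals}, whose two ingredients are the decay estimate \eqref{Delta} for $\cos(t\sqrt H)P_c$ (together with its $\mc H^1\to\BMO$ endpoint) and the $L^p$-mapping properties of the intertwining operators recorded in Lemma~\ref{intertwine}. In view of the reductions carried out above, it suffices to establish two facts. The first is the $\dot H^s$-conservation law for the continuous-spectrum part of the solution, which covers the trivial endpoints of \eqref{strichartz_est}: the segment $(p,q)=(\infty,\tfrac{6}{3-2s})$ and, dually, the line $(\tilde p',\tilde q')=(1,\tfrac{6}{3+2s})$. The second is the sharp wave-admissible homogeneous estimate $\|P_c u\|_{L^p_tL^q_x}\les\|u_0\|_{\dot H^s}+\|u_1\|_{\dot H^{s-1}}$ with $\tfrac1p+\tfrac1q=\tfrac12$; granting it, the remaining homogeneous estimates follow by interpolation with the trivial endpoint, and the dual sharp-admissible inhomogeneous estimates follow by duality.

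I would first transfer \eqref{Delta} from powers of $-\Delta$ to powers of $H$ by factoring
\[
H^{-s_1}\cos(t\sqrt H)P_c H^{-s_2}=\bigl(H^{-s_1}P_c(-\Delta)^{s_1}\bigr)\bigl((-\Delta)^{-s_1}\cos(t\sqrt H)P_c(-\Delta)^{-s_2}\bigr)\bigl((-\Delta)^{s_2}H^{-s_2}P_c\bigr).
\]
The two outer factors are bounded on $L^{2/(1-s)}$ and $L^{2/(1+s)}$ respectively by Lemma~\ref{intertwine}, as one checks the exponent inequalities $s_1-3s_2<3$ and $s_2<3+3s_1$ for $0\le s=s_1+s_2<1$, and uses the $s=1$ clause of that lemma at the endpoint --- this is exactly where $V\in L^{3/2,\infty}$ enters. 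The same manipulation applied to $\tfrac{\sin(t\sqrt H)}{\sqrt H}P_c$, starting from the classical bound $\|\tfrac{\sin(t\sqrt{-\Delta})}{\sqrt{-\Delta}}\|_{\dot W^{1,1}\to L^\infty}\les|t|^{-1}$ and its perturbed counterpart in \cite{becgol}, yields the companion estimate; combining the two and writing $\sin=\tfrac{1}{2i}(e^{it\sqrt H}-e^{-it\sqrt H})$ produces a dispersive bound for the half-wave group $e^{\pm it\sqrt{HP_c}}$ with a loss of one derivative and decay rate $|t|^{-s}$, $0\le s\le 1$.

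For the conservation law, since $\cos(t\sqrt{HP_c})$ and $\sin(t\sqrt{HP_c})$ are contractions on $L^2$ commuting with $H^{\sigma}$ in the functional calculus, the representation $P_c u(t)=\cos(t\sqrt H)P_c u_0+\tfrac{\sin(t\sqrt H)}{\sqrt H}P_c u_1$ and the equivalence \eqref{new_sobolev} (valid since $|s|<\tfrac32$ and $|s-1|<\tfrac32$ throughout $0\le s<\tfrac32$) at once give $\|P_c u(t)\|_{\dot H^s}+\|(P_c u)_t(t)\|_{\dot H^{s-1}}\les\|u_0\|_{\dot H^s}+\|u_1\|_{\dot H^{s-1}}$. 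For the sharp-admissible estimate, both the $u_0$- and the $u_1$-piece reduce, after absorbing a factor $H^{-1/2}$ and invoking \eqref{new_sobolev}, to the single bound $\|e^{it\sqrt{HP_c}}g\|_{L^p_tL^q_x}\les\|g\|_{\dot H^s}$. I would obtain this by Beals' complex interpolation: interpolate the analytic family $z\mapsto e^{-z^2}H^{-z/2}e^{it\sqrt{HP_c}}H^{-z/2}$ between $\re z=0$, where it is $L^2_x$-bounded uniformly in $t$, and $\re z=1$, where the dispersive bound above gives an $\mc H^1_x\to\BMO_x$ estimate with $|t|^{-1}$ decay, using that $H^{i\sigma}$ is bounded on every $L^p$, $1<p<\infty$, with growth $\langle\sigma\rangle^{O(1)}$. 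This yields fixed-time $L^{q'}_x\to L^q_x$ bounds with power-type decay, and integration in $t$ by the Hardy--Littlewood--Sobolev inequality (equivalently, the Keel--Tao machine applied to the rescaled propagator) gives the space-time estimate, after which \eqref{new_sobolev} restores the $\dot H^s$ norm. Feeding this back into the reductions recorded above, with the Christ--Kiselev lemma used to reinstate the causal truncation $\int_0^t$ in the genuinely space-time inhomogeneous estimates (legitimate because every admissible pair other than the excluded $(2,\infty)$ has $\tilde p>2$), completes the proof.

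The step I expect to be the main obstacle is the endpoint bookkeeping. The argument genuinely requires the $s=1$, derivative-loss $\mc H^1\to\BMO$ endpoint of the dispersive estimate and the matching $s=1$ endpoints of Lemma~\ref{intertwine}, which is precisely why the stronger hypothesis $V\in\K_0\cap L^{3/2,\infty}$ is imposed rather than merely $V\in\K_0$. Keeping every interpolation exponent strictly inside the ranges where \eqref{new_sobolev} and Lemma~\ref{intertwine} hold, and controlling the $\langle\sigma\rangle^{|s|}$ growth of $H^{i\sigma}$ so that Stein interpolation applies, is where the care is concentrated; the Christ--Kiselev step is routine but must be arranged to steer clear of the forbidden endpoint.
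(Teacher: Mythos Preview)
Your overall strategy matches the paper's: transfer the dispersive bound \eqref{Delta} from powers of $-\Delta$ to powers of $H$ via Lemma~\ref{intertwine}, then run a $TT^*$ argument to get the sharp-admissible homogeneous estimates, interpolate with the $\dot H^s$-conservation line, and dualize for the inhomogeneous part. The transfer step and the conservation law are carried out correctly.

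The gap is in your Stein interpolation step. You propose to interpolate the analytic family $z\mapsto e^{-z^2}H^{-z/2}e^{it\sqrt{HP_c}}H^{-z/2}$ between $\Re z=0$ and $\Re z=1$, using an $\mc H^1\to\BMO$ bound at the right endpoint. For this to work you need $H^{i\sigma}$ (or the intertwining factors at $s_1=s_2=\tfrac12$) bounded on $\mc H^1$ and $\BMO$ with polynomial growth in $\sigma$. Theorem~\ref{thm:multiplier3} only gives $L^p$ boundedness for $1<p<\infty$, and Lemma~\ref{intertwine} at $s=1$ gives $L^1$ and $L^\infty$ endpoints, not Hardy space ones. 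The paper is explicit that this route would ``require a discussion of Hardy spaces for $H$, which we avoid here.'' So as written the endpoint of your interpolation is not justified by anything available in the paper.

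The step is also unnecessary. Once you have transferred \eqref{Delta} to powers of $H$ you already possess, for every fixed $0<s<1$, the fixed-time bound
\[
\Big\|\frac{\cos(t\sqrt H)P_c}{H^{s}}\Big\|_{L^{2/(1+s)}\to L^{2/(1-s)}}\les|t|^{-s}.
\]
There is nothing left to interpolate in $s$: apply Young's inequality (Hardy--Littlewood--Sobolev) in $t$ to the $TT^*$ operator, set the exponents to be dual, and conclude directly that
\[
\frac{\cos(t\sqrt H)P_c}{H^{s/2}},\ \frac{\sin(t\sqrt H)P_c}{H^{s/2}}\in\B\big(L^2_x,\,L^{2/s,2}_t L^{2/(1-s)}_x\big)
\]
for the full open range $0<s<1$, which is \eqref{sharp_H}. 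This is exactly what the paper does, and it sidesteps the Hardy/BMO issue entirely. Your invocation of Christ--Kiselev for the retarded inhomogeneous estimates is fine but also not needed; the paper gets them from duality and interpolation with the $(\tilde p',\tilde q')=(1,\tfrac{6}{3+2s})$ line.
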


\begin{proof}
Using the fact that
$$
\frac{\cos(t \sqrt H) P_c} {H^s} = \frac {(\cos(t \sqrt H) P_c} {(-\Delta)^{-s}} ((-\Delta)^s P_c H^{-s})
$$
and Lemma \ref{intertwine}, since $\frac 2 {1+s} < \frac 3 {2s}$, we immediately obtain from (\ref{Delta}) that for $0 \leq s < 1$
$$
\Big\|\frac{\cos(t\sqrt H) P_c} {H^s} f\Big\|_{L^{\frac 2 {1-s}}} \les |t|^{-s} \|f\|_{L^{\frac 2 {1+s}}}.
$$
The endpoint $s=1$ case is also true, having been proved in \cite{becgol}; however, using it in the interpolation would require a discussion of Hardy spaces for $H$, which we avoid here.

Therefore, by Young's inequality,
$$
\frac{\cos(t\sqrt H) P_c} {H^s} \in \B(L^{q_1, r}_t L^{\frac 2 {1+s}}_x, L^{q_2, r}_t L^{\frac 2 {1-s}}_x),
$$
where $\frac 1 {q_2} = \frac 1 {q_1} + s - 1$, with modifications at the endpoints: namely, when $q_1=1$ or $q_2=\infty$, then $L^{q_1, r}$ becomes $L^{q_1, 1}$ and $L^{q_2, r}$ becomes $L^{q_2, \infty}$.

In particular one can set the spaces to be dual to each other:
$$
\frac{\cos(t\sqrt H) P_c} {H^s} \in \B(L^{\frac 2 {2-s}, 2}_t L^{\frac 2 {1+s}}_x, L^{\frac 2 s, 2}_t L^{\frac 2 {1-s}}_x).
$$
By a $T T^*$ argument, then, for $0 \leq s < 1$
\be\lb{sharp_H}
\frac{\sin (t\sqrt H) P_c} {H^{s/2}},\ \frac{\cos (t\sqrt H) P_c} {H^{s/2}} \in \B(L^2_x, L^{\frac 2 s, 2}_t L^{\frac 2 {1-s}}_x).
\ee
This is the full range of sharply admissible homogeneous Strichartz estimates (\ref{sharp_str}). Due to Lemma \ref{intertwine}, powers of $H$ can be traded for powers of $-\Delta$ in this range.

Since for $-3/2<s<3/2$ the usual Sobolev norms are equivalent to those defined in terms of $H$, see (\ref{new_sobolev}), it follows that the positive energy part of equation (\ref{wave_V}) conserves the $\dot H^s$ norms, $-1/2<s<3/2$:
$$
\|P_c u\|_{L^\infty_t \dot H^s_x} \les \|u_0\|_{\dot H^s} + \|u_1\|_{\dot H^{s-1}}.
$$
This implies the homogeneous Strichartz estimates along the line segment $(p, q) = (\infty, \frac 6 {3-2s})$, $0 \leq s < 3/2$, by Sobolev embedding.

By interpolation with the sharp admissible estimates (\ref{sharp_H}), we obtain the full range of homogeneous Strichartz estimates (\ref{hom_str}).

Finally, the sharp admissible dual estimates (\ref{sharp_H}) imply by duality the sharp dual estimates (\ref{sharp_dual}). The other extreme case $(\tilde p', \tilde q') = (1, \frac 6 {3+2s})$, $0 \leq s < 3/2$, follows from the Sobolev norm conservation, as above. Interpolating between these two cases, we obtain all the inhomogeneous Strichartz estimates.
\end{proof}

\section*{Acknowledgments}
M.B.\;has been supported by the NSF grant DMS--1700293 and by the Simons Collaboration Grant No.\;429698.

M.G.\;is supported by the Simons Collaboration Grant No.\;635369.

The authors would like to thank Shijun Zheng for the helpful conversations.

\end{document}